\newcommand{\ds}{\displaystyle}
\newcommand{\ts}{\textstyle}
\newcommand{\tint}{{\ts \int}}
\renewcommand{\mathbb}{\mathds}
\newcommand{\C}[1]{\mathcal{#1}}
\newcommand{\ov}[1]{\overline{#1}}
\newcommand{\wt}[1]{\widetilde{#1}}
\newcommand{\wh}[1]{\widehat{#1}}
\newcommand{\B}[1]{\mathds{#1}}
\DeclareMathOperator{\Var}{\mathbb{V}ar}
\numberwithin{equation}{section}
\newtheorem{thm}{Theorem}[section]
\newtheorem{prop}[thm]{Proposition}
\newtheorem{lemma}[thm]{Lemma}
\newtheorem{cor}[thm]{Corollary}
\theoremstyle{remark}
\newenvironment{proof}{{\sc Proof.}}{\ $\square$}
\titleformat{\section}[block]{\sc\center}{\thetitle.}{5pt}{}[]
\titlespacing{\section}{0pt}{*4.5}{*3}
\titleformat{\subsection}[runin]{\sc}{\thetitle.}{5pt}{}[.]
\titlespacing{\subsection}{0pt}{*3}{*2}
\titleformat{\subsubsection}[runin]{\it}{\thetitle.}{5pt}{}[.]
\titlespacing{\subsubsection}{0pt}{*2}{*2}
\newcommand{\thmref}[1]{\ref{#1} (page \pageref{#1})}
\newcommand{\myeq}[1]{{\rm (\ref{#1}, page \pageref{#1})}}
\DeclareMathOperator{\XX}{\text{\bf \textsf X}}
\begin{document}
\renewcommand{\sectionmark}[1]{\markboth{\thesection\ #1}{}}
\renewcommand{\subsectionmark}[1]{\markright{\thesubsection\ #1}}
\fancyhf{}
\fancyhead[RE]{\small\sc\nouppercase{\leftmark}}
\fancyhead[LO]{\small\sc\nouppercase{\rightmark}}
\fancyhead[LE,RO]{\thepage}
\fancyfoot[RO,LE]{\small\sc Olivier Catoni}
\fancyfoot[LO,RE]{\small\sc\today}
\renewcommand{\footruleskip}{1pt}
\renewcommand{\footrulewidth}{0.4pt}
\newcommand{\mypoint}{\makebox[1ex][r]{.\:\hspace*{1ex}}}
\addtolength{\footskip}{11pt}
\pagestyle{plain}
\begin{center}
{\bf Challenging the empirical mean and empirical variance: a deviation study}\\[12pt]
{\sc Olivier Catoni\footnote{CNRS --  UMR 8553,
D\'epartement de Math\'ematiques et Applications,
Ecole Normale Sup\'erieure, 45, rue d'Ulm, F75230 Paris 
cedex 05, and INRIA Paris-Rocquencourt -- CLASSIC team.}
}\\[12pt]
{\small \it \today }\\[12pt]
\end{center}


{\small
{\sc Abstract :} 
We present new M-estimators of the mean and variance of real valued random 
variables, based on PAC-Bayes bounds.
We analyze the non-asymptotic minimax properties of 
the deviations of those estimators for sample distributions having
either a bounded variance or a bounded variance and a bounded kurtosis. 
Under those weak hypotheses, allowing for heavy-tailed distributions,
we show that the worst case deviations of the empirical mean 
are suboptimal. We prove indeed that for 
any confidence level, there is some M-estimator 
whose deviations are of the same order as the 
deviations of the empirical mean of a Gaussian statistical 
sample, even when the statistical sample is instead heavy-tailed.
Experiments reveal that these new estimators perform 
even better than predicted by our bounds, showing deviation quantile functions uniformly
lower at all probability levels than the empirical 
mean for non-Gaussian sample distributions as 
simple as the mixture of two Gaussian measures. 
\\[1ex]
{\sc 2010 Mathematics Subject Classification:}
62G05, 62G35.\\[1ex]
{\sc Keywords:} 
Non-parametric estimation, M-estimators, 
PAC-Bayes bounds.
}
\pagestyle{fancy}

\newcommand\eqdef{\triangleq}
\newcommand\A{\mathcal{A}}
\newcommand\cB{\mathcal{B}}
\newcommand\cC{\Theta}
\newcommand\E{\mathbb{E}}
\newcommand\cE{\mathcal{E}}
\newcommand\cF{\mathcal{F}}
\newcommand\cH{\mathcal{H}}
\newcommand\I{\mathcal{I}}
\newcommand\J{\mathcal{J}}
\newcommand\cK{\mathcal{K}}
\newcommand\cL{\mathcal{L}}
\newcommand\M{\mathcal{M}}
\newcommand\N{\mathcal{N}}
\renewcommand\P{\mathbb{P}}
\newcommand\R{\mathbb{R}}
\renewcommand\S{\mathcal{S}}
\newcommand\bcR{B} 
\newcommand\cR{\tilde{B}} 
\newcommand\W{\mathcal{W}}
\newcommand\X{\mathcal{X}}
\newcommand\Y{\mathcal{Y}}
\newcommand\Z{\mathcal{Z}}

\newcommand\jyem{\em}

\newcommand\hdelta{\hat{\delta}}
\newcommand\hth{\hat{\theta}}
\newcommand\tth{\tilde{\theta}}
\newcommand\hlam{\hat{\lam}}
\newcommand\lamerm{\hlam^{\textnormal{(erm)}}}

\newcommand\tf{\tilde{f}}
\newcommand\tlam{\tilde{\lam}}

\newcommand\be{\beta}
\newcommand\ga{\gamma}
\newcommand\kap{\kappa}
\newcommand\lam{\lambda}

\newcommand\sint{{\textstyle \int}}
\newcommand\inth{\sint \pi(d\theta)}
\newcommand\inthj{\sint \pij(d\theta)}
\newcommand\inthp{\sint \pi(d\theta')}

\newcommand\gas{\ga^*}

\newcommand\hpi{\hat{\pi}}
\newcommand\pis{\pi^*}
\newcommand\tpi{\tilde{\pi}}

\newcommand\ovth{\ov{\theta}}
\newcommand\wth{\wt{\theta}}
\newcommand\wthj{\wt{\theta}_j}

\newcommand\eps{\varepsilon}
\newcommand\logeps{\log(\eps^{-1})}
\newcommand\leps{\log^2(\eps^{-1})}

\newcommand{\bi}{\begin{itemize}}
\newcommand{\ei}{\end{itemize}}

\newcommand\ovR{\ov{R}}

\newcommand\hhpi{\hat{\hat{\pi}}}
\newcommand\wwth{\wt{\wt{\theta}}}
\newcommand\pia{\pi^{(1)}}
\newcommand\pib{\pi^{(2)}}
\newcommand\pij{\pi^{(j)}}
\newcommand\tpia{\tilde{\pi}^{(1)}}
\newcommand\tpib{\tilde{\pi}^{(2)}}
\newcommand\tpij{\tilde{\pi}^{(j)}}
\newcommand\wtha{\wt{\theta}_1}
\newcommand\wthb{\wt{\theta}_2}
\newcommand\wths{\wt{\theta}_s}
\newcommand\wtht{\wt{\theta}_t}

\newcommand\cmin{c_{\min}}
\newcommand\cmax{c_{\max}}

\newcommand\ra{\rightarrow}

\newcommand\hatt{\hat{t}}
\newcommand\argmax{\textnormal{argmax}}
\newcommand\argmin{\textnormal{argmin}}

\newcommand\diag{\textnormal{Diag}}

\newcommand\Fg{\cF^\#}

\newcommand\vp{\varphi}
\newcommand\tvp{\tilde{\vp}}
\newcommand\tphi{\tilde{\phi}}
\renewcommand\th{\theta}

\newcommand\vsp{\vspace{1cm}}
\newcommand\lhs{\text{l.h.s.}}
\newcommand\rhs{\text{r.h.s.}}

\newcommand\expe[2]{\undc{\E}{#1\sim#2}}
\newcommand\expec[2]{\undc{\E}{#1\sim#2}}
\newcommand\expecc[2]{\E_{#1}}
\newcommand\expecd[2]{\E_{#2}}

\newcommand\hC{\hat{C}}
\newcommand\hf{\hat{f}}
\newcommand\hrho{\hat{\rho}}

\newcommand\br{\bar{r}}
\newcommand\chr{\check{r}}
\newcommand\bR{\bar{R}}

\newcommand\lan{\langle}
\newcommand\ran{\rangle}

\newcommand\logepsg{\log(|\Cg|\eps^{-1})}

\newcommand\Pemp{\hat{\P}}

\newcommand\fracl[2]{{(#1)}/{#2}}
\newcommand\fracc[2]{{#1}/{#2}}
\newcommand\fracr[2]{{#1}/{(#2)}}
\newcommand\fracb[2]{{(#1)}/{(#2)}}

\newcommand\hfproj{\hf^{\textnormal{(proj)}}}
\newcommand\thproj{\hat{\th}^{\textnormal{(proj)}}}

\newcommand\hfols{\hf^{\textnormal{(ols)}}}
\newcommand\thfols{\tilde{f}^{\textnormal{(ols)}}}
\newcommand\thols{\hat{\th}^{\textnormal{(ols)}}}
\newcommand\therm{\hat{\th}^{\textnormal{(erm)}}}
\newcommand\hferm{\hf^{\textnormal{(erm)}}}
\newcommand\zols{\zeta^{\textnormal{(ols)}}}

\newcommand\thrid{\tilde{\th}} 
\newcommand\frid{\tilde{f}} 
\newcommand\freg{f^{\textnormal{(reg)}}}
\newcommand\hfrlam{\hf^{\textnormal{(ridge)}}} 
\newcommand\hfllam{\hf^{\textnormal{(lasso)}}} 

\newcommand\flin{f^*_{\textnormal{lin}}}
\newcommand\thlin{\th^{\textnormal{(lin)}}}
\newcommand\Flin{\mathcal{F}_{\textnormal{lin}}}

\renewcommand\Phi{\XX}
\newcommand\demi{\frac{1}{2}}
\newcommand\demic{\fracc{1}{2}}

\newcommand\substa[2]{\substack{#1\\#2}}
\newcommand\substac[2]{{#1\,;\,#2}}
\newcommand\tpsi{\tilde{\psi}}
\newcommand\tzeta{\tilde{\zeta}}
\newcommand\ta{\tilde{a}}
\newcommand\chis{\chi_\sigma}
\newcommand\tchi{\tilde{\chi}}
\newcommand\tchis{\tchi_\sigma}
\newcommand\psis{\psi_\sigma}

\newcommand\tA{\tilde{A}}
\newcommand\tL{\tilde{L}}

\newcommand\hL{\hat{L}}
\newcommand\hcE{\hat{\cE}}
\newcommand\hDe{\hat{\cE}}
\newcommand\cEb{\cE^{\sharp}}
\newcommand\La{L^{\flat}}
\newcommand\cEa{\cE^{\flat}}
\newcommand\Lb{L^{\sharp}}

\newcommand\Pa{P^{\flat}}
\newcommand\Pb{P^{\sharp}}

\newcommand\ela{\tilde{\ell}}
\newcommand\hpig{\hpi^{\textnormal{(Gibbs)}}}

\newcommand\sigmb{\phi}
\newcommand{\tR}{\tilde{\cR}}
\newcommand{\logdeps}{\log(4d\eps^{-1})}
\newcommand{\logddeps}{\log(2d^2\eps^{-1})}

\section{Introduction}
This paper is devoted to the estimation of the mean and possibly also 
of the variance of a real random variable
from an independent identically distributed sample. 
While the most traditional way to deal with this question is to focus 
on the mean square error of estimators, 
we will instead focus on their deviations.
Deviations are related to the estimation of confidence 
intervals which are of importance in many situations. 
While the empirical mean 
has an optimal minimax mean square error among all mean 
estimators in all models including 
Gaussian distributions, its deviations 
tell a different story. 
Indeed, as far as the mean square error is concerned, Gaussian distributions
represent already the worst case, so that in the framework 
of a minimax mean least 
square analysis, no need is felt to improve estimators for 
non-Gaussian sample distributions. 
On the contrary, the deviations 
of estimators, and especially of the empirical mean,
are worse for non-Gaussian samples than 
for Gaussian ones. Thus 
a deviation analysis will point out 
possible improvements of the empirical mean 
estimator more successfully. 
It was nonetheless quite unexpected for us, and will undoubtedly 
also be for some of our readers, that the empirical 
mean could be improved, under such a weak hypothesis as the existence 
of a finite variance, and that this has remained 
unnoticed until 
now. One of the reasons may 
be that the weaknesses of the empirical mean 
disappear if we let the sample distribution be 
fixed and the sample size go to infinity. 
This does not mean however that a substantial 
improvement is not possible, nor that it is 
only concerned with specific sample sizes 
or weird worst case distributions : 
in the end of this paper, we 
will present experiments made on quite simple 
sample distributions, consisting in the mixture
of two to three Gaussian measures, showing that 
more than twenty five percent can be gained on the widths of
confidence intervals, for realistic sample sizes ranging from 100 to 2000. 
We think that, beyond the technicalities involved here, 
this exemplifies more broadly the pitfalls of asymptotic
studies in statistics and should be quite thought provoking 
about the notions of optimality commonly used to assess the 
performance of estimators. 

Our deviation study will use 
two kinds of tools: M-estimators to truncate observations 
and PAC-Bayesian theorems to combine estimates on the same sample 
without using a split scheme \cite{McA98,McA99,McA01,Cat05,Aud03b,Alq08}.

Its general conclusion is that, whereas the deviations of the 
empirical mean estimate may increase a lot when the 
sample distribution is far from being Gaussian, those of some new M-estimators
will not. The improvement is the best for heavy-tailed distributions, 
as the worst case analysis performed to prove lower bounds will show.  
The improvement also increases as the confidence level at which 
deviations are computed increases.

Similar conclusions can be drawn in the case of least square regression
with random design \cite{AuCat10a, AuCat10b}. Discovering that using truncated estimators permits 
to get rid of sub Gaussian tail assumptions was the spur to study 
the simpler case of mean estimation for its own sake.
Restricting the subject in this way (which is of course
a huge restriction compared with least square regression) makes it 
possible to propose simpler dedicated estimators 
and to push their analysis further. It will indeed be 
possible here to obtain mathematical proofs for 
numerically significant non-asymptotic bounds.

The weakest hypothesis we will consider is the existence of 
a finite but unknown variance. In our M-estimators, 
adapting the truncation level 
depends on the value of the variance. 
However, this adaptation can be done without actually 
knowing the variance, through Lepski's approach.

Computing an observable confidence interval, 
on the other hand, requires more information. 
The simplest case is when the variance is known, or 
at least lower than a known bound.
If it is not so, another possibility is to assume 
that the kurtosis is known, or lower than a known bound. 
Introducing the kurtosis is natural to our approach: 
in order to calibrate the truncation level for the 
estimation of the mean, we need to know the variance,
and in the same way, in order to calibrate the truncation 
level for the estimation of the variance, we need to use 
the variance of the variance, which is provided by the 
kurtosis as a function of the variance itself.

In order to assess the quality of the results, we prove corresponding
lower bounds for the best estimator confronted to the worst possible 
sample distribution, 
following the minimax approach. 
We also compute lower bounds for the deviations of the 
empirical mean estimator when the sample distribution is the worst 
possible. 
These latter bounds show the improvement that can be brought 
by M-estimators over the more traditional 
empirical mean. We plot the numerical values 
of these upper and lower bounds against the confidence
level for typical finite sample sizes to show the gap between them.

The reader may wonder why we only consider the following extreme
models, the narrow Gaussian model and the broad models 
\begin{align}
\label{eq1.1}
\C{A}_{v_{\max}} & = \bigl\{ \B{P} \in \C{M}_+^1(\B{R}) : 
\Var_{\B{P}} \leq v_{\max} \bigr\}, \\
\label{eq1.2.0}
\text{and } 
\C{B}_{v_{\max}, \kappa_{\max}} & = \bigl\{ \B{P} \in \C{A}_{v_{\max}} : 
\kappa_{\B{P}} \leq \kappa_{\max} \bigr\},
\end{align}
where $\Var_{\B{P}}$ is the variance of $\B{P}$, 
$\kappa_{\B{P}}$ its kurtosis, and $\C{M}_+^1(\B{R})$ is 
the set of probability measures (positive measures of mass $1$) 
on the real line equipped with the Borel sigma-algebra.

The reason is that, the minimax bounds obtained in these 
broad models being close to the ones obtained in the Gaussian 
model, introducing intermediate models would not change the 
order of magnitude of the bounds.

Let us end this introduction by advocating the value of confidence
bounds, stressing more particularly the case of high confidence levels,
since this is the situation where truncation brings the most 
decisive improvements. 

One situation of interest which we will 
not comment further is when the estimated parameter is 
critical and making a big 
mistake on its value, even with a small probability, is 
unacceptable.  

Another scenario to be encountered in statistical learning 
is the case when 
lots of estimates are to be computed and compared in the course 
of some decision making. 
Let us imagine, for instance, that some parameter
$\theta \in \Theta$ is to be tuned in order to optimize the expected 
loss $\B{E} \bigl[ f_{\theta}(X) \bigr]$ of some 
family  
of loss functions $\bigl\{ f_{\theta} 
: \theta \in \Theta \bigr\}$ computed on some random input $X$. Let us 
consider a split sample scheme where two i.i.d. samples $(X_1, \dots, 
X_s) \overset{\text{def}}{=} X_1^s$ and $(X_{s+1}, \dots, X_{s+n}) 
\overset{\text{def}}{=} X_{s+1}^{s+n}$ are used, one to build some 
estimators $\wh{\theta}_k(X_1^s)$ of $\argmin_{\theta \in \Theta_k} \B{E} 
\bigl[ f_{\theta}(X) \bigr]$ in subsets $\Theta_k$, $k=1, \dots, K$
of $\Theta$, and the other to test those estimators and keep hopefully
the best. This is a very common model selection situation. 
One can think for instance of the choice of a basis to expand 
some regression function. If $K$ is large, estimates of 
$\B{E} \bigl[ f_{\wh{\theta}_k(X_1^s)}(X_{s+1}) \bigr]$ will be required
for a lot of values of $k$. In order to keep safe from over-fitting,
very high confidence levels will be required if the resulting
confidence level is to be computed through a union bound (because
no special structure of the problem can be used to do better).
Namely, a confidence level of $1 - \epsilon$ on the final result
of the optimization on the test sample will require a confidence
level of $1 - \epsilon/K$ for each mean estimate on the test sample. 
Even if $\epsilon$ is not very small (like, say, $5/100$), 
$\epsilon/K$ may be very small. For instance, if 10 parameters
are to be selected among a set of 100, this gives $K = 
{100 \choose 10} \simeq 1.7 \cdot 10^{13}$. 
In practice, except in some special situations 
where fast algorithms exist, a heuristic scheme will be used 
to compute only a limited number of estimators $\wh{\theta}_k$. 
An example of heuristics is to 
add greedily parameters one at a time, choosing
at each step the one with the best estimated performance increase (in our example, this requires to compute $1000$ estimators instead of 
${100 \choose 10}$). Nonetheless,
asserting the quality of the resulting choice 
requires a union 
bound on the whole set of possible outcomes of the 
data driven heuristic, and therefore calls for  
very high confidence levels for each estimate of the mean performance 
$\B{E} \bigl[ f_{\wh{\theta}_k (X_1^s)}(X_{s+1}) \bigr]$ 
on the test set. 

The question we are studying in this paper
should not be confused with robust statistics \cite{Huber,Huber2}. 
The most fundamental difference is that we are interested in 
estimating the mean of the sample distribution. 
In robust statistics, it is assumed 
that the sample distribution is in the neighbourhood 
of some known parametric model. This gives the possibility 
to replace the mean by some other location 
parameter, which as a rule 
will not be equal to the mean when the shape of the distribution 
is not constrained (and in particular is not assumed 
to be symmetric).  

Other differences are that our point of view is 
non-asymptotic and that we study the deviations 
of estimators whereas robust statistics is focussed 
on their asymptotic mean square error. 

Although we end up defining M-estimators 
with the help of influence functions, like in robust 
statistics, we use a truncation level depending on 
the sample size, whereas in robust statistics, 
the truncation level depends on the amount of contamination. 
Also, we truncate at much higher levels (that is 
we eliminate less outliers) that what would be advisable
for instance in the case of a contaminated Gaussian
statistical sample. 
Thus, although we have some tools in common with 
robust statistics, we use them differently 
to achieve a different purpose.

Adaptive estimation of a location parameter \cite{Stone,Beran,Bickel} is
another setting where the empirical mean can be replaced by more efficient
estimators. However, the setting studied by these authors is quite different
from ours. The main difference, here again, is that the location parameter 
is assumed to be the center of symmetry of the sample distribution, 
a fact that is used to tailor location estimators based on a 
symmetrized density estimator.
Another difference is that in these papers, the estimators are built with
asymptotic properties in view, such as asymptotic normality with optimal
variance and asymptotic robustness. These properties, although desirable, 
give no information on the non-asymptotic deviations we are
studying here, and therefore do not provide as we do non-asymptotic confidence intervals.

\section{Some new M-estimators}

Let $(Y_i)_{i=1}^n$ be an i.i.d. sample drawn from some unknown probability 
distribution $\B{P}$ on the real line $\B{R}$ equipped with the
Borel $\sigma$-algebra $\C{B}$. Let $Y$ be independent from $(Y_i)_{i=1}^n$ 
with the same marginal distribution $\B{P}$. 
Assuming that $Y \in \bold{L}_2(\B{P})$, let $m$ be the mean of $Y$ and let $v$ be its variance: 
$$ 
\B{E}(Y) = m \quad \text{ and } \quad \B{E}[ (Y - m)^2 ] = v.
$$

Let us consider some non-decreasing\footnote{We would like to thank one of the 
anonymous referees 
of an early version of this paper for pointing out the 
benefits that could be drawn from the use of a non-decreasing 
influence function in this section.} influence function $\psi : \B{R} 
\rightarrow \B{R}$ such that
\begin{equation}
\label{eqFund}
-\log \bigl( 1 - x + x^2/2 \bigr) 
\leq \psi(x) \leq \log \bigl( 1 + x + x^2/2 \bigr).
\end{equation}
The {\em widest} possible choice of $\psi$ compatible with 
these inequalities is 
\begin{equation}
\label{eq1.2w}
\psi(x) = \begin{cases}
\log \bigl(1 + x +x^2/2 \bigr), & x \geq 0,\\
- \log \bigl( 1 - x + x^2/2 \bigr), & x \leq 0,
\end{cases}
\end{equation}
whereas the {\em narrowest} possible choice is  
\begin{equation}
\label{eq1.2}
\psi(x) = \begin{cases}
\log(2), & x \geq 1, \\
- \log \bigl( 1 - x + x^2/2 \bigr), & 0 \leq x \leq 1, \\
\log \bigl( 1 + x + x^2 /2 \bigr), & -1 \leq x \leq 0, \\
- \log(2), & x \leq -1.
\end{cases}
\end{equation}
Although $\psi$ is not the derivative of some explicit
error function, we will use it in the same way, so 
that it can be considered as an influence function.

Indeed, $\alpha$ being some positive real parameter
to be chosen later, 
we will build our estimator $\wh{\theta}_{\alpha}$ of the mean $m$
as the solution of the equation 
$$
\sum_{i=1}^n \psi \bigl[ \alpha (Y_i - \wh{\theta}_{\alpha} ) \bigr]  
= 0.
$$
(When the narrow choice of $\psi$ defined by equation \eqref{eq1.2} 
is made, the above equation may have more 
than one solution, in which case any of them can be used 
to define $\wh{\theta}_{\alpha}$.)\\
\mbox{} \hfill \includegraphics{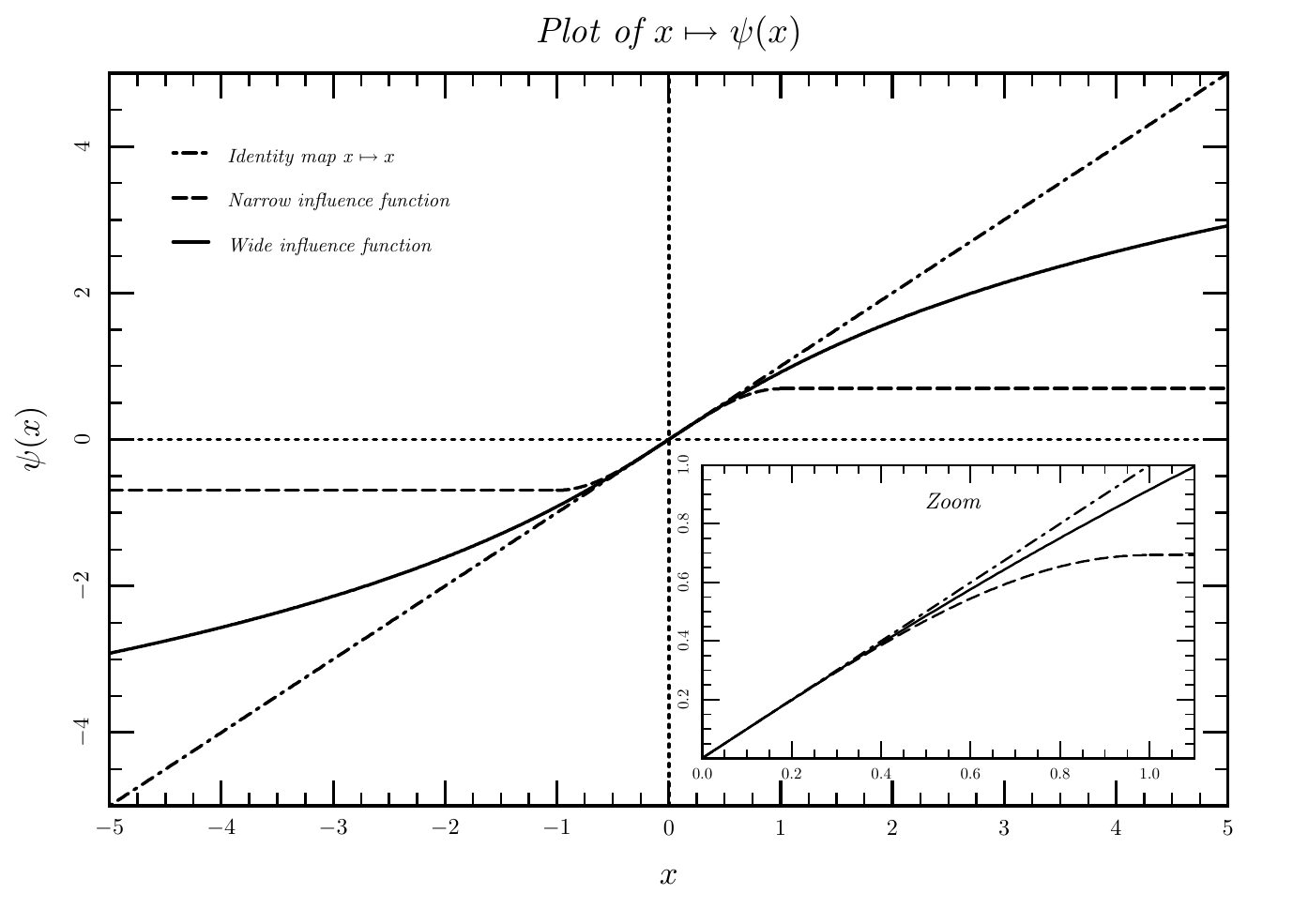}   
\hfill \mbox{}\\[-4ex]

The widest choice of $\psi$ is the one making $\wh{\theta}_{\alpha}$ 
the closest to the empirical mean. For this reason it may 
be preferred if our aim is to stabilize the empirical mean
by making the smallest possible change, which could be 
justified by the fact that the empirical mean is optimal
in the case when the sample $(Y_i)_{i=1}^n$ is Gaussian.

Our analysis of $\wh{\theta}_{\alpha}$ will rely on the following 
exponential moment inequalities, from which deviation bounds 
will follow. Let us introduce the quantity
$$
r(\theta) = \frac{1}{\alpha n} \sum_{i=1}^n \psi \bigl[ 
\alpha (Y_i - \theta) \bigr], \qquad \theta \in \B{R}. 
$$
\begin{prop}
\begin{multline*}
\B{E} \Bigl\{ \exp \bigl[ \alpha n r(\theta) \bigr] \Bigr\}
\leq  
\Bigr\{ 
1 + \alpha(m-\theta) + \frac{\alpha^2}{2} \bigl[ v + (m-\theta)^2\bigr]  
\Bigr\}^n \\ \leq \exp \Bigl\{ n \alpha(m-\theta) + \frac{n\alpha^2}{2} 
\bigl[ v + (m-\theta)^2 \bigr] \Bigr\}.
\end{multline*}
In the same way
\begin{multline*}
\B{E} \Bigl\{ \exp \bigl[ - \alpha n r(\theta) \bigr] \Bigr\}
\leq  
\Bigr\{ 
1 - \alpha(m-\theta) + \frac{\alpha^2}{2} \bigl[ v + (m-\theta)^2\bigr]  
\Bigr\}^n \\ \leq \exp \Bigl\{ - n \alpha(m-\theta) + \frac{n\alpha^2}{2} 
\bigl[ v + (m-\theta)^2 \bigr] \Bigr\}.
\end{multline*}
\end{prop}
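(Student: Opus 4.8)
The plan is to exploit independence to reduce the joint exponential moment to a single-variable computation, and then to apply the defining inequalities \eqref{eqFund} pointwise. First I would rewrite the exponent as a sum: since $\alpha n\, r(\theta) = \sum_{i=1}^n \psi\bigl[\alpha(Y_i - \theta)\bigr]$, the exponential of the sum factorizes into a product, and because the $Y_i$ are i.i.d. with the same law as $Y$,
\[
\B{E}\Bigl\{\exp\bigl[\alpha n\, r(\theta)\bigr]\Bigr\}
= \Bigl(\B{E}\Bigl\{\exp\bigl[\psi(\alpha(Y-\theta))\bigr]\Bigr\}\Bigr)^n.
\]

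Next, I would use the right-hand inequality of \eqref{eqFund}, namely $\psi(x) \le \log\bigl(1 + x + x^2/2\bigr)$, which exponentiates to $\exp[\psi(x)] \le 1 + x + x^2/2$ (the argument of the logarithm equals $\tfrac12((x+1)^2+1)>0$, so everything is well defined). Setting $x = \alpha(Y-\theta)$ and taking expectations,
\[
\B{E}\Bigl\{\exp\bigl[\psi(\alpha(Y-\theta))\bigr]\Bigr\}
\le 1 + \alpha\,\B{E}(Y-\theta) + \frac{\alpha^2}{2}\,\B{E}\bigl[(Y-\theta)^2\bigr].
\]
Here I would compute $\B{E}(Y-\theta) = m-\theta$ and, since the variance equals $v$, the second moment about $\theta$ is $\B{E}[(Y-\theta)^2] = v + (m-\theta)^2$. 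This yields exactly the bracketed base appearing in the first claimed inequality. Raising to the $n$-th power gives the middle term, and the final exponential bound follows from the elementary estimate $1 + u \le e^u$ applied with $u = \alpha(m-\theta) + \tfrac{\alpha^2}{2}[v+(m-\theta)^2]$.

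The inequality for the negative exponent is obtained by the mirror-image argument: I would invoke the left-hand inequality of \eqref{eqFund}, $\psi(x) \ge -\log\bigl(1 - x + x^2/2\bigr)$, equivalently $\exp[-\psi(x)] \le 1 - x + x^2/2$ (again the quadratic $\tfrac12((x-1)^2+1)$ stays positive), and then repeat verbatim the factorization and moment computation, the only change being the reversed sign of the linear term $-\alpha(m-\theta)$.

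I do not expect any genuine obstacle here: the whole design of the constraints \eqref{eqFund} is precisely that $\exp(\pm\psi)$ is dominated by a quadratic, whose expectation involves only the first two moments of $Y$. The only points worth checking are that the two quadratics inside the logarithms remain strictly positive everywhere (so \eqref{eqFund} is meaningful) and that no moment beyond the variance is ever needed — which is exactly what makes the finite-variance hypothesis sufficient. Note that the monotonicity of $\psi$ plays no role in this particular estimate; it will only become relevant later, when the defining equation for $\wh{\theta}_{\alpha}$ is analyzed.
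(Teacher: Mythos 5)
Your proof is correct and follows exactly the argument the paper intends (the paper only remarks that the proposition ``is an obvious consequence of inequalities \eqref{eqFund} and of the fact that the sample is assumed to be i.i.d.''): factorize the exponential by independence, exponentiate \eqref{eqFund} to get $\exp[\pm\psi(x)] \leq 1 \pm x + x^2/2$, compute the first two moments about $\theta$, and finish with $1+u \leq e^u$. Your side remarks on the positivity of the quadratics and on the irrelevance of monotonicity at this stage are accurate as well.
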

The proof of this proposition is an obvious consequence of 
inequalities \eqref{eqFund} and of the fact that the sample
is assumed to be i.i.d.
It justifies the special choice of influence 
function we made. If we had taken for $\psi$ 
the identity function, and thus for $\wh{\theta}_{\alpha}$ the empirical 
mean, the exponential moments of $r(\theta)$ would have existed only 
in the case when the random variable $Y$ itself has exponential moments. 
In order to bound $\wh{\theta}_{\alpha}$, we will find two non-random 
values $\theta_-$ and $\theta_+$ of the parameter such that with large
probability $r(\theta_-) > 0 > r(\theta_+)$, which will imply that 
$\theta_- < \wh{\theta}_{\alpha} < \theta_+$, since $r(\wh{\theta}_{\alpha}) = 0 $
by construction and $\theta \mapsto r(\theta)$ is non-increasing. 

\begin{prop}
\label{prop1.2}
The values of the parameters $\alpha \in \B{R}_+$ 
and $\epsilon \in )0,1($ being set, 
let us define for any $\theta \in \B{R}$ the bounds
\begin{align*}
B_+(\theta) & = m - \theta + \frac{\alpha}{2} 
\bigl[ v + (m-\theta)^2 \bigr] + \frac{\log\bigl( \epsilon^{-1} \bigr)}{
n \alpha}, \\
B_-(\theta) & = m - \theta - \frac{\alpha}{2} 
\bigl[ v + (m-\theta)^2 \bigr] - \frac{\log\bigl( \epsilon^{-1} \bigr)}{
n \alpha}.
\end{align*}
They satisfy  
$$
\B{P} \bigl[ r(\theta) < B_+(\theta) \bigr] \geq 1 - \epsilon, \quad  
\B{P} \bigl[ r(\theta) > B_-(\theta) \bigr] \geq 1 - \epsilon.
$$
\end{prop}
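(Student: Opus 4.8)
The plan is to derive both inequalities from the exponential moment bounds established in the preceding Proposition by means of the exponential Markov (Chernoff) inequality, with the bounds $B_+$ and $B_-$ chosen precisely so that the resulting tail probability collapses to $\epsilon$.

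For the first inequality, I would bound $\B{P}\bigl[ r(\theta) \geq B_+(\theta) \bigr]$. Since $\alpha > 0$, the map $t \mapsto \exp(\alpha n t)$ is increasing, so this probability equals $\B{P}\bigl[ \exp(\alpha n r(\theta)) \geq \exp(\alpha n B_+(\theta)) \bigr]$, and Markov's inequality gives the upper bound $\exp(-\alpha n B_+(\theta))\, \B{E}\bigl[ \exp(\alpha n r(\theta)) \bigr]$. Inserting the first moment bound from the previous Proposition, the expectation is at most $\exp\bigl\{ n\alpha(m-\theta) + \frac{n\alpha^2}{2}[v+(m-\theta)^2] \bigr\}$. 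Multiplying by $\exp(-\alpha n B_+(\theta))$ and substituting the definition of $B_+(\theta)$, the terms $n\alpha(m-\theta)$ and $\frac{n\alpha^2}{2}[v+(m-\theta)^2]$ cancel exactly, leaving only $\exp(-\log(\epsilon^{-1})) = \epsilon$. Hence $\B{P}\bigl[ r(\theta) \geq B_+(\theta) \bigr] \leq \epsilon$, i.e. $\B{P}\bigl[ r(\theta) < B_+(\theta) \bigr] \geq 1 - \epsilon$.

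The second inequality follows by the symmetric argument applied to $-r(\theta)$: I would write $\B{P}\bigl[ r(\theta) \leq B_-(\theta) \bigr] = \B{P}\bigl[ \exp(-\alpha n r(\theta)) \geq \exp(-\alpha n B_-(\theta)) \bigr]$, apply Markov's inequality, and use the second moment bound $\B{E}\bigl[ \exp(-\alpha n r(\theta)) \bigr] \leq \exp\bigl\{ -n\alpha(m-\theta) + \frac{n\alpha^2}{2}[v+(m-\theta)^2] \bigr\}$. The definition of $B_-(\theta)$ again cancels the drift and variance contributions, leaving $\epsilon$, so that $\B{P}\bigl[ r(\theta) > B_-(\theta) \bigr] \geq 1 - \epsilon$.

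There is no genuine obstacle here: the entire content lies in the preceding moment inequalities, and the only point to check is the bookkeeping showing that $B_{\pm}(\theta)$ are calibrated so that the exponent reduces to $-\log(\epsilon^{-1})$. The additive term $\log(\epsilon^{-1})/(n\alpha)$ appearing in each bound is exactly what is needed to absorb the $\log(\epsilon^{-1})$ produced by Markov's inequality once the exponent is divided through by $\alpha n$.
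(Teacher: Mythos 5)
Your proof is correct and is exactly the argument the paper intends: the paper's one-line proof invokes ``Chebyshev's inequality'' (meaning the exponential Markov bound you use) applied to the exponential moment estimates of the preceding proposition, with $B_{\pm}(\theta)$ calibrated so the exponent reduces to $-\log(\epsilon^{-1})$. Nothing is missing, including the correct handling of strict versus non-strict inequalities when passing to the complementary events.
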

The proof of this proposition is also straightforward: it is a mere 
consequence of Chebyshev's inequality and of the previous proposition.
Let us assume that 
$$
\alpha^2 v + \frac{2 \log\bigl(\epsilon^{-1}\bigr)}{n}
\leq 1.
$$
Let $\theta_+$ be the smallest solution of the quadratic equation 
$B_+(\theta_+) = 0$ and let $\theta_-$ be the largest solution 
of the equation $B_-(\theta_-) = 0$. 

\begin{lemma}
\begin{align*}
\theta_+ & = 
m + \left( \frac{\alpha v}{2} 
+ \frac{\log \bigl( \epsilon^{-1} \bigr)}{ \alpha n} \right) 
\left( \frac{1}{2} + \frac{1}{2} \sqrt{1 - \alpha^2 v - \frac{2 \log\bigl(\epsilon^{-1}\bigr)}{n}}\, \right)^{-1} 
\\ & \leq 
m + \left( \frac{\alpha v}{2} 
+ \frac{\log \bigl( \epsilon^{-1} \bigr)}{ \alpha n} \right) 
\left( 1 - \frac{\alpha^2 v}{2} - \frac{\log \bigl( \epsilon^{-1} 
\bigr)}{n} \right)^{-1}, \\  
\theta_- & 
 = m - \left( \frac{\alpha v}{2} 
+ \frac{\log \bigl( \epsilon^{-1} \bigr)}{ \alpha n} \right) 
\left( \frac{1}{2} + \frac{1}{2} \sqrt{1 - \alpha^2 v - 
\frac{2 \log\bigl(\epsilon^{-1}\bigr)}{n}}\, \right)^{-1} \\
& \geq m - \left( \frac{\alpha v}{2} 
+ \frac{\log \bigl( \epsilon^{-1} \bigr)}{ \alpha n} \right) 
\left( 1 - \frac{\alpha^2 v}{2} - \frac{\log \bigl( \epsilon^{-1} 
\bigr)}{n} \right)^{-1}.
\end{align*}
Moreover, since the map $\theta \mapsto r(\theta)$ is non-increasing, 
with probability at least $1 - 2 \epsilon$, 
$$
\theta_- < \wh{\theta}_{\alpha} < \theta_+.
$$
\end{lemma}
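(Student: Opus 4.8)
The plan is to solve the two quadratic equations $B_+(\theta) = 0$ and $B_-(\theta) = 0$ in closed form, recognize the stated expressions, and then combine the two tail estimates of Proposition~\ref{prop1.2} through the monotonicity of $r$. I would begin with $\theta_+$. Setting $u = m - \theta$, the equation $B_+(\theta) = 0$ becomes $\frac{\alpha}{2} u^2 + u + \bigl( \frac{\alpha v}{2} + \frac{\log(\epsilon^{-1})}{n\alpha}\bigr) = 0$, a quadratic whose discriminant is governed precisely by $D \eqdef \sqrt{1 - \alpha^2 v - 2\log(\epsilon^{-1})/n}$; the standing assumption $\alpha^2 v + 2\log(\epsilon^{-1})/n \leq 1$ guarantees that $D$ is real with $0 \le D \le 1$. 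The two roots are $u = (-1 \pm D)/\alpha$, and since $\theta = m - u$, the smallest $\theta$ corresponds to the largest $u$, that is to the $+$ sign, giving $\theta_+ = m + (1 - D)/\alpha$.

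Next I would check that this agrees with the stated expression and establish the bound. Multiplying numerator and denominator of $(1-D)/\alpha$ by $1 + D$ and using $1 - D^2 = \alpha^2 v + 2\log(\epsilon^{-1})/n$ turns it into $\bigl(\frac{\alpha v}{2} + \frac{\log(\epsilon^{-1})}{\alpha n}\bigr)\bigl(\frac{1}{2} + \frac{D}{2}\bigr)^{-1}$, which is the claimed equality. For the upper bound, since the prefactor $\frac{\alpha v}{2} + \frac{\log(\epsilon^{-1})}{\alpha n}$ is positive and both denominators are positive (the second because $\alpha^2 v + 2\log(\epsilon^{-1})/n \le 1$ forces $\frac{\alpha^2 v}{2} + \frac{\log(\epsilon^{-1})}{n} \le \frac12$), it suffices to compare the denominators: the inequality $\frac{1}{2} + \frac{D}{2} \geq 1 - \frac{\alpha^2 v}{2} - \frac{\log(\epsilon^{-1})}{n}$ is, after substituting $1 - \alpha^2 v - 2\log(\epsilon^{-1})/n = D^2$, equivalent to $D \geq D^2$, which holds because $0 \le D \le 1$. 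The formula and bound for $\theta_-$ follow from the identical computation with the signs reversed, the relevant root now being the smallest $u = (1 - D)/\alpha$.

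Finally I would assemble the deviation statement. Applying Proposition~\ref{prop1.2} at $\theta = \theta_+$ gives $\B{P}[r(\theta_+) < 0] \geq 1 - \epsilon$, since $B_+(\theta_+) = 0$ by construction, and applying it at $\theta = \theta_-$ gives $\B{P}[r(\theta_-) > 0] \geq 1 - \epsilon$. A union bound then yields $r(\theta_-) > 0 > r(\theta_+)$ with probability at least $1 - 2\epsilon$. On this event, since $r(\wh{\theta}_{\alpha}) = 0$ and $\theta \mapsto r(\theta)$ is non-increasing, $r(\theta_-) > r(\wh{\theta}_{\alpha})$ forces $\theta_- < \wh{\theta}_{\alpha}$ while $r(\wh{\theta}_{\alpha}) > r(\theta_+)$ forces $\wh{\theta}_{\alpha} < \theta_+$, which is the desired conclusion.

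The computations are all routine; the only points requiring care are selecting the correct root (the \emph{smallest} solution for $\theta_+$ versus the \emph{largest} for $\theta_-$) and handling the closing monotonicity argument in the narrow-$\psi$ case, where $r$ may vanish on a whole interval. There the strict inequalities $r(\theta_-) > 0$ and $r(\theta_+) < 0$ still place $\theta_-$ strictly to the left and $\theta_+$ strictly to the right of every zero of $r$, so any admissible choice of $\wh{\theta}_{\alpha}$ lies strictly between them.
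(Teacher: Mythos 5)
Your proof is correct and takes essentially the same route the paper intends: the paper dismisses the lemma as an obvious consequence of Proposition \ref{prop1.2} and the definitions of $\theta_+$ and $\theta_-$, and you simply supply the omitted routine work (the quadratic formula with discriminant $D^2 = 1 - \alpha^2 v - 2\log(\epsilon^{-1})/n$, the rationalization giving the stated closed form, the denominator comparison $D \geq D^2$, and the union bound plus monotonicity of $r$). Your closing remark on the narrow-$\psi$ case, where $r$ may vanish on a whole interval and the strict inequalities $r(\theta_-) > 0 > r(\theta_+)$ still bracket every admissible $\wh{\theta}_{\alpha}$, is a careful detail the paper leaves implicit.
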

The proof of this lemma is also an obvious consequence 
of the previous proposition and of the definitions of 
$\theta_+$ and $\theta_-$. 
Optimizing the choice of $\alpha$ provides
\begin{prop}
\label{prop1.4}
Let us assume that $ n > 2 \log(\epsilon^{-1}) $
and let us consider 
\begin{align*}
\eta & =
 \sqrt{ \frac{2 v \log(\epsilon^{-1})}{\ds n \left( 1 - \frac{2 \log(\epsilon^{-1})}{n} \right)}} &  
\text{ and } & & \alpha & = \sqrt{\frac{2 \log \bigl( \epsilon^{-1} \bigr)}{n (v + \eta^2)}}.
\end{align*}
In this case $\theta_+ = m + \eta$ and $\theta_- = m - \eta$, 
so that with probability at least $1 - 2 \epsilon$, 
$$
\lvert m - \wh{\theta}_{\alpha} \rvert 
\leq \eta =  
 \sqrt{ \frac{2 v \log(\epsilon^{-1})}{\ds n \left( 1 - \frac{2 \log(\epsilon^{-1})}{n} \right)}}. 
$$

In the same way, if we want to make a choice of $\alpha$ independent
from $\epsilon$, we can choose
$$
\alpha = \sqrt{\frac{2}{n v}},
$$
and assume that
$$
n > 2 \bigl[1 + \log(\epsilon^{-1})\bigr].
$$
In this latter case, with probability at least $1 - 2 \epsilon$, 
$$
\lvert \wh{\theta}_{\alpha} - m \rvert \leq 
\frac{1 + \log \bigl( \epsilon^{-1} \bigr)}{\ds 
\frac{1}{2} + \frac{1}{2} \sqrt{1  - \frac{2[1 + \log ( \epsilon^{-1} )]}{n}}} 
\sqrt{\frac{v}{2n}} 
\leq \frac{1 + \log\bigl(\epsilon^{-1}\bigr)}{\ds 1 - 
\frac{1 + \log(\epsilon^{-1})}{n}} \sqrt{\frac{v}{2n}}.
$$
\end{prop}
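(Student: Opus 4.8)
The plan is to observe that the final statement of the preceding Lemma already expresses $\theta_+$ and $\theta_-$ in closed form in terms of $\alpha$, $v$, $n$ and $\epsilon$, together with the deviation conclusion $\theta_- < \wh{\theta}_\alpha < \theta_+$ holding with probability at least $1 - 2\epsilon$. Hence both assertions of the proposition reduce to substituting the prescribed value of $\alpha$ into those expressions and simplifying; there is no new probabilistic content, only algebra. Throughout, I would write $L = \log(\epsilon^{-1})$ for brevity, and keep in mind that the standing requirement $\alpha^2 v + 2L/n \le 1$ imposed just before the Lemma must be checked for each choice of $\alpha$.

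For the first choice, I would begin by computing the two quantities entering the exact formula for $\theta_+$. With $\eta^2 = 2vL/(n - 2L)$ one finds $v + \eta^2 = vn/(n-2L)$, so that $\alpha^2 = 2L/[n(v+\eta^2)] = 2L(n-2L)/(n^2 v)$ and therefore $\alpha^2 v + 2L/n = 4L(n-L)/n^2$. Two consequences follow: first, $\alpha^2 v + 2L/n \le 1$ is equivalent to $(n-2L)^2 \ge 0$, hence automatic; second, $1 - \alpha^2 v - 2L/n = (n-2L)^2/n^2$, whose square root equals $(n-2L)/n$ precisely because the hypothesis $n > 2L$ guarantees positivity. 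Substituting this, the bracketed factor $\tfrac12 + \tfrac12\sqrt{1 - \alpha^2 v - 2L/n}$ collapses to $(n-L)/n$, while $\tfrac{\alpha v}{2} + \tfrac{L}{\alpha n} = \tfrac{1}{2\alpha}(\alpha^2 v + 2L/n) = 2L(n-L)/(\alpha n^2)$. Their product is $2L/(\alpha n)$, and one checks directly from $\alpha^2 = 2L(n-2L)/(n^2 v)$ and $\eta^2 = 2vL/(n-2L)$ that $2L/(\alpha n) = \eta$. This gives $\theta_+ = m + \eta$; the computation for $\theta_-$ is identical up to sign, so $\theta_- = m - \eta$, and the Lemma yields $|m - \wh{\theta}_\alpha| \le \eta$ with probability at least $1 - 2\epsilon$.

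For the second choice $\alpha = \sqrt{2/(nv)}$, I would proceed in the same way but now keep $\epsilon$ out of $\alpha$. Here $\alpha^2 v = 2/n$, so $\alpha^2 v + 2L/n = 2(1+L)/n$; the precondition $\alpha^2 v + 2L/n \le 1$ is thus exactly the stated hypothesis $n > 2(1+L)$, which simultaneously makes $1 - \alpha^2 v - 2L/n = 1 - 2(1+L)/n$ nonnegative. Then $\tfrac{\alpha v}{2} + \tfrac{L}{\alpha n} = \tfrac{1}{2\alpha}\cdot\tfrac{2(1+L)}{n} = (1+L)\sqrt{v/(2n)}$, and plugging this together with the bracketed factor into the exact formula for $\theta_+$ produces the first displayed bound. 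The second, looser bound follows from the elementary inequality $\sqrt{1-x} \ge 1-x$ for $0 \le x \le 1$, applied with $x = \alpha^2 v + 2L/n$: it gives $\tfrac12 + \tfrac12\sqrt{1-x} \ge 1 - x/2 = 1 - (1+L)/n$, so replacing the bracketed factor by this lower bound and inverting enlarges the estimate to $(1+L)(1 - (1+L)/n)^{-1}\sqrt{v/(2n)}$. Symmetry in $\theta_-$ again converts both one-sided bounds into the two-sided statement.

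The step I expect to demand the most care is bookkeeping rather than ideas: making sure that every radicand stays nonnegative and that the positive branch of each square root is taken, which is exactly where the two sample-size hypotheses $n > 2\log(\epsilon^{-1})$ and $n > 2[1 + \log(\epsilon^{-1})]$ are used, and verifying that the auxiliary constraint $\alpha^2 v + 2\log(\epsilon^{-1})/n \le 1$ imposed before the Lemma indeed holds (automatically for the first $\alpha$, and as the hypothesis itself for the second). No genuinely hard step arises, since the optimal $\eta$ has been reverse-engineered so that the fixed point $B_+(\theta_+) = 0$ has the clean solution $m + \eta$.
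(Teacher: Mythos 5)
Your proposal is correct and matches the paper's intended argument: the paper gives no separate proof of Proposition \ref{prop1.4}, presenting it as the direct result of substituting the two choices of $\alpha$ into the closed-form expressions for $\theta_\pm$ in the preceding lemma, which is exactly what you carry out (including the automatic verification of $\alpha^2 v + 2\log(\epsilon^{-1})/n \leq 1$ in the first case and the use of $\sqrt{1-x} \geq 1-x$ for the looser bound in the second). All of your algebraic simplifications check out, so there is nothing to add.
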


In the following plots, we compare the bounds on the deviations
of our M-estimator $\wh{\theta}_{\alpha}$ with the deviations 
of the empirical mean $\ds M = \frac{1}{n} \sum_{i=1}^n Y_i $
when the sample distribution is Gaussian and when it belongs to the 
model $\C{A}_{1}$ defined in the introduction by equation \myeq{eq1.1}. 
(The bounds for the empirical mean will be explained and proved in 
subsequent sections.)

More precisely, the deviation upper bounds for our estimator $\wh{\theta}_{\alpha}$ 
for the worst sample distribution
in $\C{A}_1$, the model defined by \myeq{eq1.1}, 
are compared with
the exact deviations of the empirical
mean $M$ of a Gaussian sample.  
This is the minimax bound at all confidence levels
in the Gaussian model, as will be proved
later on. Consequently, the deviations of  
our estimator cannot be smaller for the worst 
sample distribution in $\C{A}_1$, which contains
Gaussian distributions. 
We see on the plots that starting 
from $\epsilon = 0.1$ (corresponding to a $80\%$ 
confidence level), 
our upper bound is close to being minimax, 
not only in $\C{A}_1$, 
but also in the small Gaussian sub-model. 
This shows that the deviations of our estimator are
close to reach the minimax bound in any intermediate model containing 
Gaussian distributions and contained in $\C{A}_1$. 

Our estimator is also compared with the deviations 
of the empirical mean for the worst distribution in $\C{A}_1$, 
(to be established later). In particular 
the lower bound proves that there are sample distributions 
in $\C{A}_1$ for which the deviations of the empirical 
mean are far from being optimal, showing the need 
to introduce a new estimator to correct this.

In the first plot, we chose a sample size 
$n = 100$ and plotted the deviations against the confidence level 
(or rather against $\epsilon$, the confidence level 
being $1 - 2 \epsilon$).

\noindent \mbox{} \hfill \raisebox{-0.8cm}[9.2cm][0cm]{\includegraphics{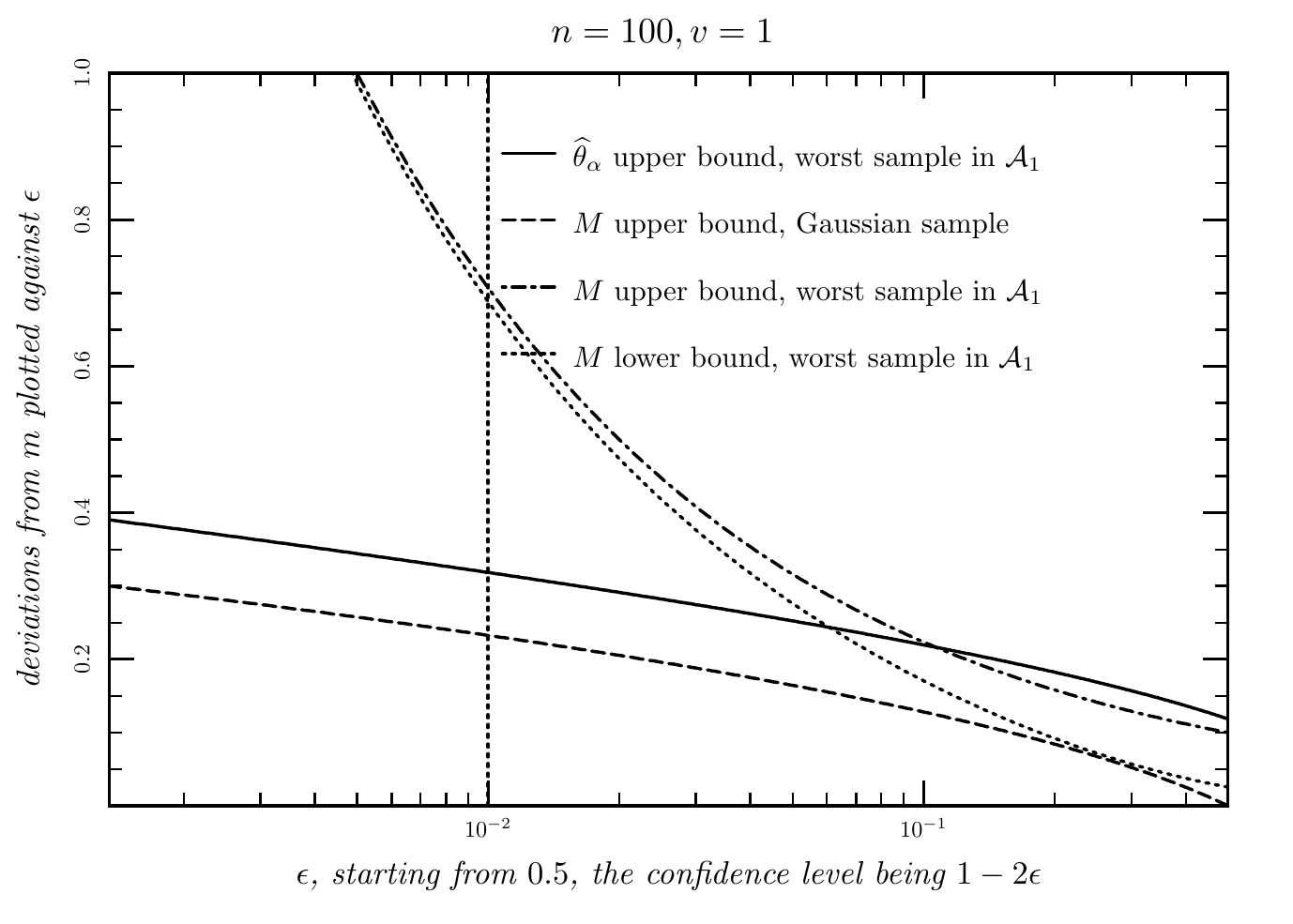}}
\hfill \mbox{}

As shown on the second plot, showing a wider range of 
$\epsilon$ values, our bound stays close to the Gaussian 
bound up to very high confidence levels (up to $\epsilon = 10^{-9}$
and more).
On the other hand, it already outperforms the empirical mean 
by a factor larger than two at confidence level $98 \%$ (that is 
for $\epsilon = 10^{-2}$).

\noindent \mbox{} \hfill \raisebox{-0.8cm}[9.2cm][0cm]{\includegraphics{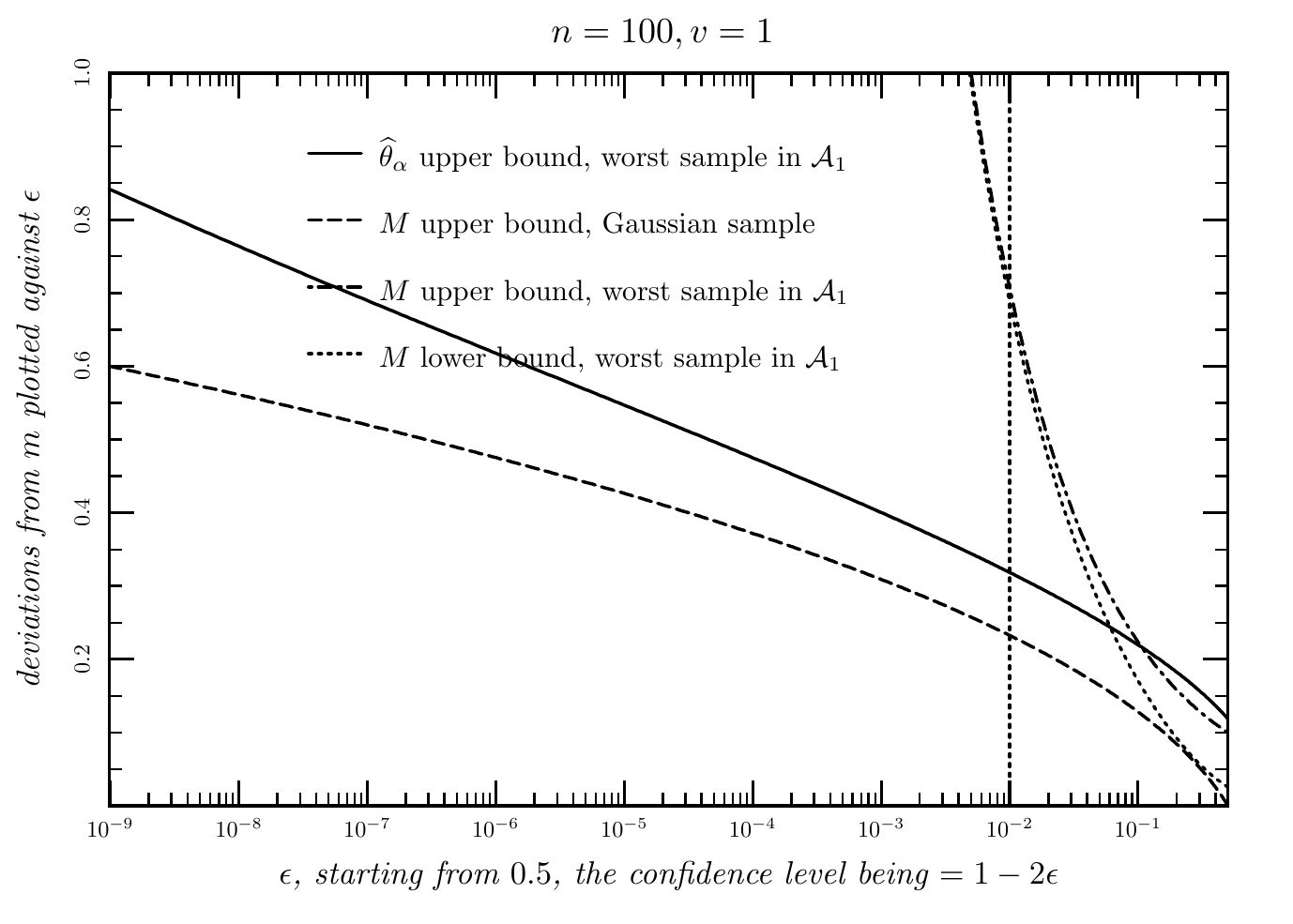}} 
\hfill \mbox{}

When we increase the sample size to $n = 500$,
the performance of our M-estimator is even closer to optimal. 

\noindent \mbox{} \hfill \raisebox{-0.8cm}[9cm][0cm]{\includegraphics{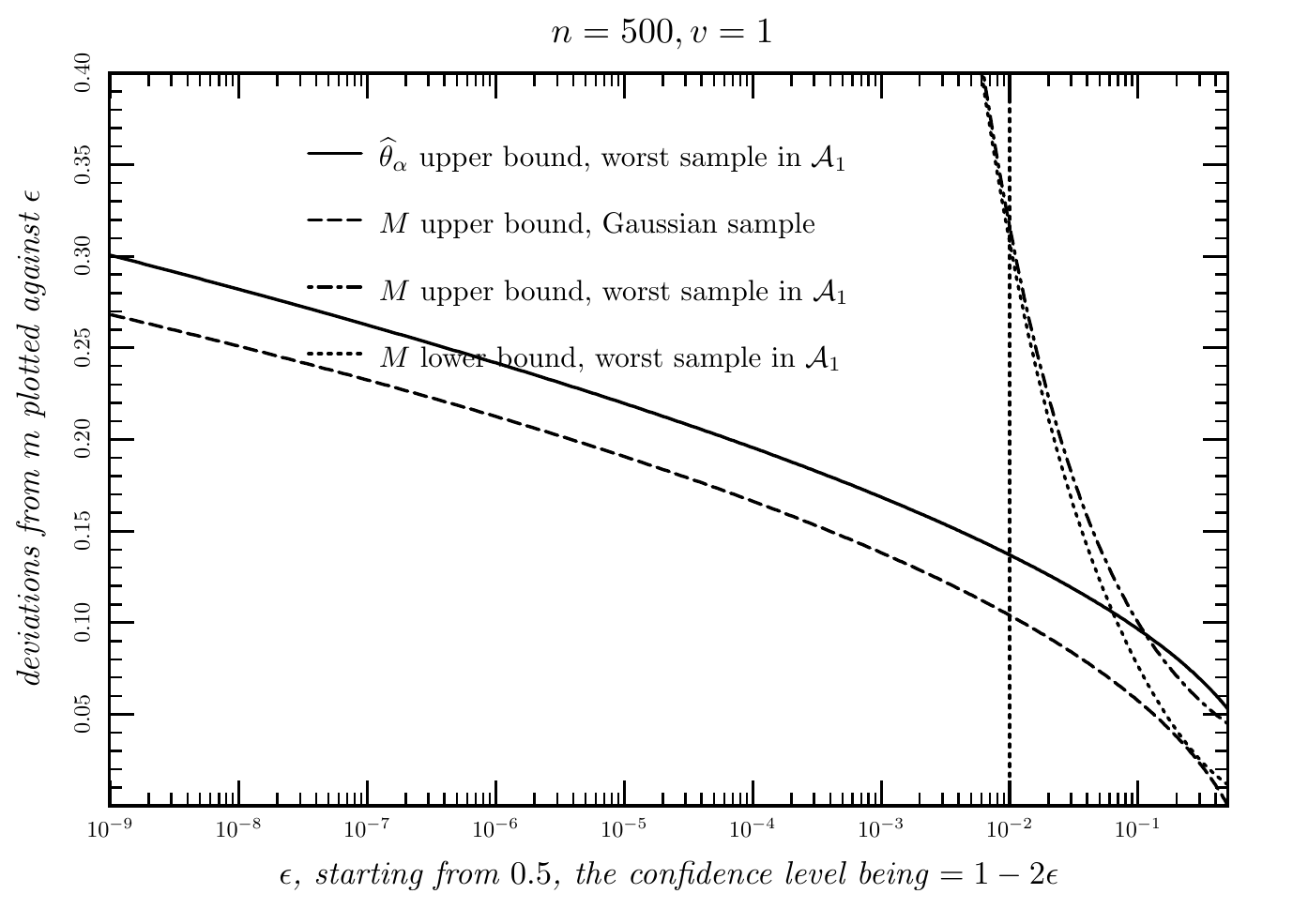}}   
\hfill \mbox{}

\section{Adapting to an unknown variance}

In this section, we will use Lepski's 
renowned adaptation method \cite{Lepski2} 
when nothing is known, except that the variance 
is finite. Under so uncertain, (but unfortunately
so frequent) circumstances, it is impossible to
provide any {\em observable} confidence intervals,
but it is still possible to define an adaptive estimator
and to bound its deviations by unobservable bounds 
(depending on the unknown variance).
To understand the subject of this section, 
one should keep in mind that {\em adapting} 
to the variance is a weaker requirement than {\em estimating} the variance : 
estimating the variance at any predictable rate 
would require more prior information (bearing for instance
on higher moments of the sample distribution). 

The idea of Lepski's method is powerful and simple : consider
a sequence of confidence intervals obtained by assuming that the variance
is bounded by a sequence of bounds $v_k$ and pick up as an estimator
the middle of the smallest interval intersecting all the larger 
ones. For this to be legitimate, we need all the confidence regions
for which the variance bound is valid to hold together, which 
is performed using a union bound.

Let us describe this idea more precisely. Let $\wh{\theta}(v_{\max})$ 
be some estimator of the mean depending on some assumed variance bound 
$v_{\max}$, as the ones described in the beginning of this paper. 
Let $\delta(v_{\max}, \epsilon) \in \B{R}_+ \cup \{ + \infty \}$  
be some deviation bound\footnote{$\C{A}_{v_{\max}}$ 
is defined by \myeq{eq1.1}} proved in $\C{A}_{v_{\max}}$: 
namely let us assume that for any sample distribution 
in $\C{A}_{v_{\max}}$, with probability at least $1 - 2 \epsilon$, 
$$
\lvert m -  \wh{\theta}(v_{\max}) \rvert \leq \delta(v_{\max}, \epsilon).
$$
Let us also decide by convention that 
$\delta(v_{\max}, 0) = + \infty$.

Let $\nu \in \C{M}_+^1(\B{R}_+)$ be some coding atomic probability 
measure on the positive real line, which will serve to take 
a union bound on a (countable) set of possible values of $v_{\max}$.

We can choose for instance for $\nu$ the following coding distribution : 
expressing $v_{\max}$ by comparison with some reference value 
$V$, 
$$
v_{\max} = V 2^s \sum_{k=0}^{d} c_k 2^{-k}, \quad
s \in \B{Z}, d \in \B{N}, (c_k)_{k=0}^d \in \{0,1\}^{d+1},
c_0 = c_d = 1,
$$
we set 
$$
\nu(v_{\max}) = \frac{2^{-2(d-1)}}{5(\lvert s \rvert + 2) (\rvert s \rvert + 3)} 
$$
and otherwise we set $\nu(v_{\max}) = 0$. 
It is easy to see that this defines a probability distribution on
$\B{R}_+$ (supported by dyadic numbers scaled by the factor V). 
It is clear that, as far as possible, the reference value $V$ should
be chosen as close as possible to the true variance $v$.  

Another possibility is to set for some parameters $V \in \B{R}$, 
$\rho > 1$ and $s \in \B{N}$, 
\begin{equation}
\label{eq3.1.0}
\nu(V \rho^{2k}) = \frac{1}{2s+1}, \quad k \in \B{Z}, \lvert k \rvert \leq s.
\end{equation}

Let us consider for any $v_{\max}$ such that $\delta(v_{\max}, \epsilon \nu(v_{\max}))
< + \infty$ the confidence interval 
$$
I(v_{\max}) = \wh{\theta}(v_{\max}) + \delta \bigl[ v_{\max}, \epsilon \nu(v_{\max}) 
\bigr] \times (-1,1).
$$
Let us put $I(v_{\max}) = \B{R}$ when $\delta(v_{\max}, \epsilon \nu(v_{\max})) = + 
\infty$. 

Let us consider the non-decreasing family of closed intervals
$$
J(v_1) = \bigcap \Bigl\{ I(v_{\max}) : v_{\max} \geq v_1 \Bigr\}, \qquad v_1 \in \B{R}_+.
$$
(In this definition, we can restrict the intersection to the support 
of $\nu$, since otherwise $I(v_{\max}) = \B{R}$.)
A union bound shows immediately that
with probability at least $1 - 2 \epsilon$, $m \in J(v)$, 
implying as a consequence that $J(v) \neq \varnothing$.

\begin{prop}
Since $v_1 \mapsto J(v_1)$ is a non-decreasing family of closed intervals, 
the intersection 
$$
\bigcap \Bigl\{ J(v_1) : v_1 \in \B{R}_+, J(v_1) \neq \varnothing \Bigr\}
$$
is a non-empty closed interval, and we can therefore pick up an adaptive 
estimator $\wt{\theta}$ belonging to it, choosing for instance 
the middle of this interval.

With probability at least $1 - 2 \epsilon$, $ m \in J(v)$, which implies
that $J(v) \neq \varnothing$, and therefore that $\wt{\theta} 
\in J(v)$. 

Thus with probability at least $1 - 2 \epsilon$
$$
\lvert m - \wt{\theta} \rvert \leq \lvert J(v) \rvert 
\leq 2 \inf_{v_{\max} > v}  \delta(v_{\max}, \epsilon \nu(v_{\max})). 
$$ 
If the confidence bound $\delta(v_{\max}, \epsilon)$ is 
homogeneous, in the sense that 
$$
\delta(v_{\max}, \epsilon) = B(\epsilon) \sqrt{v_{\max}}, 
$$
as it is the case in Proposition \thmref{prop1.4} 
with 
$$
B(\epsilon) =  
\sqrt{\frac{2 \log(\epsilon^{-1})}{\ds n \left(1 - \frac{ 2 \log(\epsilon^{-1})}{n} \right)}}
$$
then with probability at least 
$1 - 2 \epsilon$, 
$$
\lvert m - \wt{\theta} \rvert \leq 2 \inf_{v_{\max} > v}  
B\bigl[ \epsilon \nu(v_{\max}) \bigr] \sqrt{v_{\max}}.
$$
Thus in the case when $\nu$ is defined by equation 
\myeq{eq3.1.0} and 
$\lvert \log(v/V) \rvert \leq 2 s \log(\rho)$,
with probability at least $1 - 2 \epsilon$
$$
\lvert m - \wt{\theta} \rvert \leq 2 \rho B \left(
\frac{\epsilon}{2s+1} \right) \sqrt{v}. 
$$
\end{prop}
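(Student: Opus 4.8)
The plan is to separate the single deterministic claim — that the nested family $\{J(v_1)\}$ has a nonempty closed intersection, so that the midpoint $\wt{\theta}$ is well defined — from the probabilistic deviation estimate, and then to obtain the two announced specializations by substitution. At the outset I would replace each open interval $I(v_{\max})$ by its closure $\wh{\theta}(v_{\max}) + \delta\bigl[v_{\max},\epsilon\nu(v_{\max})\bigr]\times[-1,1]$: this leaves the interval lengths unchanged, makes each $J(v_1)$ genuinely closed as asserted, and turns the deviation inequality $\lvert m - \wh{\theta}(v_{\max})\rvert \le \delta$ into the exact membership $m \in I(v_{\max})$, removing the only open-versus-closed ambiguity.

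For the deterministic part, I would write each nonempty $J(v_1)$ as $[a(v_1), b(v_1)]$. The construction is monotone, so $v_1 \le v_1'$ implies $J(v_1)\subseteq J(v_1')$, whence $a(v_1')\le a(v_1)\le b(v_1)\le b(v_1')$ on the admissible range. A point lies in $\bigcap\{J(v_1): J(v_1)\neq\varnothing\}$ iff it lies between $\sup_{v_1} a(v_1)$ and $\inf_{v_1} b(v_1)$, so this intersection is exactly the closed interval $[\sup a,\inf b]$; the chain of inequalities just displayed gives $\sup a \le \inf b$, so it is nonempty, and its midpoint defines $\wt{\theta}$. This Cantor-type nonemptiness is the one deterministic point that needs genuine, if brief, care: one must derive it from the nesting alone and not presuppose that the $J(v_1)$ are bounded.

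For the probabilistic part, the decisive remark is that the deviation bound $\lvert m - \wh{\theta}(v_{\max})\rvert\le\delta(v_{\max},\cdot)$ is available \emph{only} for those $v_{\max}$ with $\B{P}\in\C{A}_{v_{\max}}$, i.e. with $v_{\max}\ge v$, which is precisely why the interval of interest is $J(v)$, whose defining intersection runs over $v_{\max}\ge v$. Applying that bound at confidence level $\epsilon\nu(v_{\max})$ to every atom $v_{\max}\ge v$ of $\nu$ and summing the failure probabilities gives $\sum_{v_{\max}\ge v}2\epsilon\nu(v_{\max})\le 2\epsilon$ (the non-atoms contribute $I(v_{\max})=\B{R}$ and are harmless), so with probability at least $1-2\epsilon$ one has $m\in I(v_{\max})$ for all such atoms simultaneously, hence $m\in J(v)$; this is the union bound already announced before the statement. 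On this event $J(v)\neq\varnothing$, so $J(v)$ figures among the intervals intersected in the definition of $\wt{\theta}$, giving $\wt{\theta}\in\bigcap\{J(v_1): J(v_1)\neq\varnothing\}\subseteq J(v)$. As $m$ and $\wt{\theta}$ both lie in $J(v)$ we get $\lvert m-\wt{\theta}\rvert\le\lvert J(v)\rvert$, and the inclusion $J(v)\subseteq I(v_{\max})$ valid for every single $v_{\max}>v$ gives $\lvert J(v)\rvert\le\lvert I(v_{\max})\rvert = 2\delta(v_{\max},\epsilon\nu(v_{\max}))$; taking the infimum over $v_{\max}>v$ produces the first displayed inequality.

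The two specializations are then purely computational. Under homogeneity $\delta(v_{\max},\epsilon)=B(\epsilon)\sqrt{v_{\max}}$, substitution rewrites the bound as $2\inf_{v_{\max}>v}B\bigl(\epsilon\nu(v_{\max})\bigr)\sqrt{v_{\max}}$. For the coding measure of \eqref{eq3.1.0} every atom carries the same mass $\nu(v_{\max})=(2s+1)^{-1}$, so $B\bigl(\epsilon\nu(v_{\max})\bigr)=B\bigl(\epsilon/(2s+1)\bigr)$ is constant and the infimum reduces to minimizing $\sqrt{v_{\max}}$ over atoms $v_{\max}>v$. The hypothesis $\lvert\log(v/V)\rvert\le 2s\log(\rho)$ places $v$ inside the range $[V\rho^{-2s},V\rho^{2s}]$ spanned by the atoms $V\rho^{2k}$, so the smallest atom strictly above $v$ is some $v_{\max}=V\rho^{2k}$ with $k\le s$ whose predecessor satisfies $V\rho^{2(k-1)}\le v$, whence $v_{\max}\le\rho^2 v$ and $\sqrt{v_{\max}}\le\rho\sqrt{v}$; substituting gives the announced $2\rho\,B\bigl(\epsilon/(2s+1)\bigr)\sqrt{v}$. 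The only mild subtlety is the boundary value $v=V\rho^{2s}$, where no atom lies strictly above $v$ and one argues instead with the top atom (or strengthens the hypothesis to a strict inequality).
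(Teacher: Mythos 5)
Your proof is correct and takes essentially the same route as the paper, which embeds the argument in the statement and the surrounding text: a union bound over the atoms of $\nu$ (valid for all $v_{\max} \geq v$ since then $\B{P} \in \C{A}_{v_{\max}}$) gives $m \in J(v)$ with probability $1 - 2\epsilon$, the monotonicity of $v_1 \mapsto J(v_1)$ makes $\wt{\theta}$ well defined and forces $\wt{\theta} \in J(v)$ on that event, and $J(v) \subseteq I(v_{\max})$ yields the length bound, after which the two specializations are substitutions. Your added details --- closing the intervals $I(v_{\max})$, the endpoint ($\sup a \leq \inf b$) argument for nonemptiness of the nested intersection rather than an unjustified appeal to compactness, and the boundary case $v = V\rho^{2s}$ where no atom lies strictly above $v$ --- are exactly the points the paper glosses over, and you resolve them correctly.
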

Let us see what happens for a sample size of $n = 500$, when 
we assume that $\lvert\log(v/V)\rvert \leq 2 \log(100)$ and 
we take $\rho = 1.05$.\\
\mbox{} \hfill \includegraphics{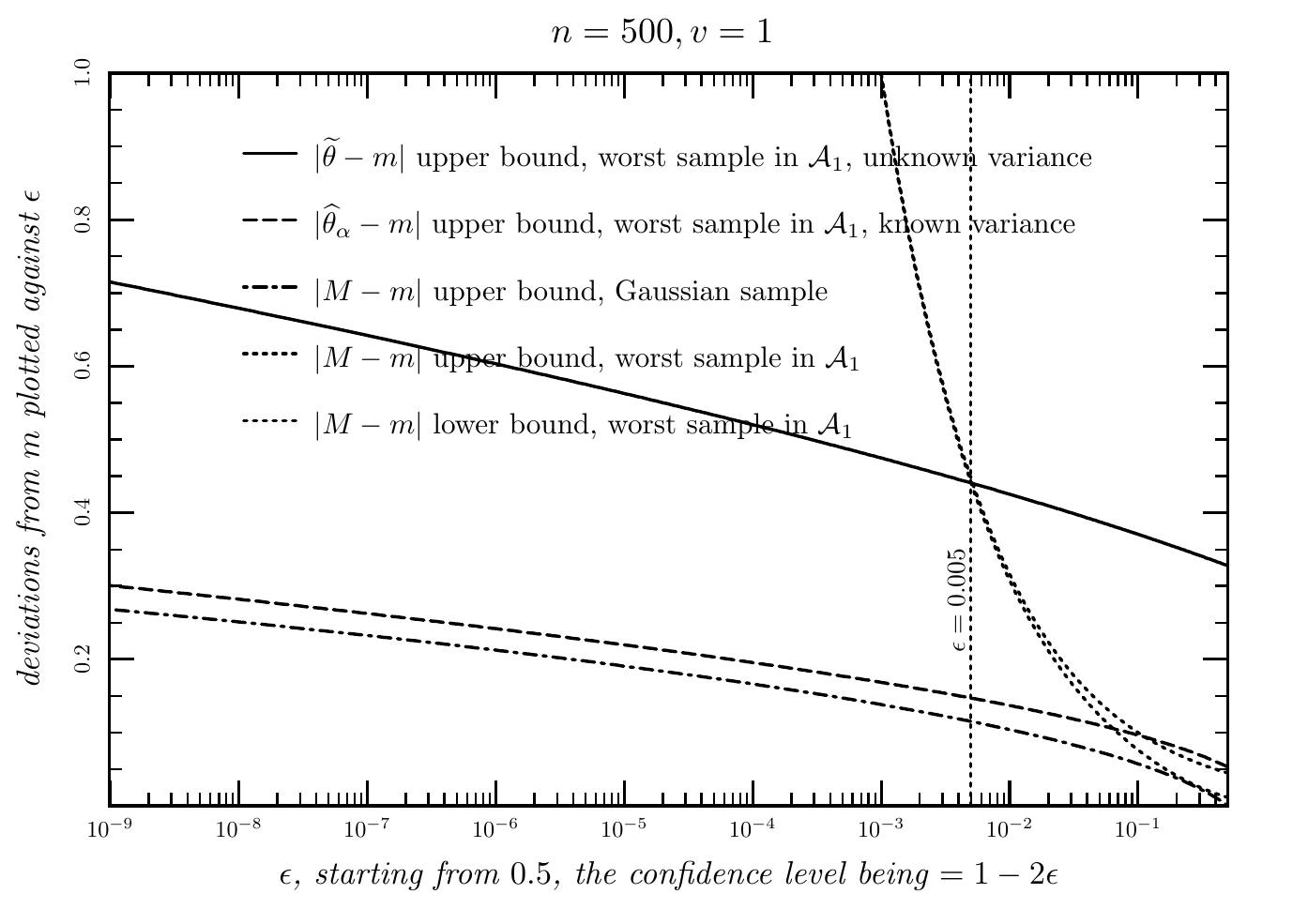}   
\hfill \mbox{}\\[-2ex]
This plot shows that, for a sample of size $n=500$, 
 there are sample distributions 
with a finite variance for which the deviations of 
the empirical mean blow up for confidence levels higher
than $99\%$, where as the deviations of our adaptive
estimator remain under control, even at confidence levels
as high as $1 - 10^{-9}$. 

The conclusion is that if our aim is to minimize the 
worst estimation error over 100 statistical experiments or more
and we have no information on the standard deviation except that 
it is in some range of the kind $(1/100, 100)$ (which 
is pretty huge and could be increased even more if 
desired), 
then the performance of the empirical mean estimator
for the worst sample distribution breaks down but thresholding 
very large outliers as $\wt{\theta}$ does can cure this problem.

\section{Mean and variance estimates depending on the kurtosis}

Situations where the variance is unknown are likely to happen.
We have seen in the previous section how to adapt 
to an unknown variance. The price to pay is  
a loss of a factor two in the deviation bound, and the fact that 
it is no longer observable. 

Here we will make hypotheses under which it is possible to 
estimate both the variance and the mean, and to obtain 
an observable confidence interval, without loosing a factor 
two as in the previous section. 
Making use of the kurtosis parameter is the most natural way 
to achieve these goals in the framework of our approach. 
This is what we are going to do here.

\subsection{Some variance estimate depending on the kurtosis}
In this section, we are going to consider an alternative 
to the unbiased usual variance estimate 
\begin{align}
\label{eq4.1bis}
\wh{V} & =  \frac{1}{n-1} 
\sum_{i=1}^n \biggl( Y_i - \frac{1}{n} \sum_{j=1}^n Y_j \biggr)^2 \\ 
& = \frac{1}{n(n-1)} \sum_{1 \leq i < j \leq n} \bigl( Y_i - Y_j \bigr)^2.
\nonumber
\end{align}
We will assume that the fourth moment $\B{E}(Y^4)$ is finite 
and that some upper bound is known for the kurtosis
$$
\kappa = \frac{\B{E} \bigl[ (Y-m)^4 \bigr]}{v^2}. 
$$
Our aim will be, as before with the mean, to define an estimate 
with better deviations than $\wh{V}$.
We will use $\kappa$ in the following computations, but when 
only an upper bound is known, 
$\kappa$ can be replaced with this upper bound in the definition 
of the estimator and the estimates of its performance.

Let us write $n = pq + r$, 
with $0 \leq r < p$, 
and let $\ds \{1, \dots, n \} = \bigsqcup_{\ell=1}^q I_\ell$
be the partition of the $n$ first intergers defined by 
$$
I_\ell 
= 
\begin{cases}
\{ i \in \B{N} ; p(\ell-1) < i \leq p \ell \}, & 1 \leq \ell < q,\\
\{ i \in \B{N} ; p(\ell-1) < i \leq n \}, & \ell = q.
\end{cases}
$$ 
We will develop some kind of block threshold scheme, introducing 
\begin{align*}
Q_{\delta}(\beta) & = \frac{1}{q}  \sum_{\ell = 1 }^q \psi \biggl( \frac{1}{\lvert I_{\ell} \rvert ( 
\lvert I_{\ell} \rvert - 1)} 
\sum_{\stackrel{i,j \in I_\ell}{i < j}} \bigl[ \beta (Y_i - Y_j)^2 - 2 \delta \bigr] \biggr) \\
& = \frac{1}{q} \sum_{\ell=1}^q \psi \Biggl[ \frac{\beta}{\lvert 
I_\ell \rvert - 1} \sum_{i \in I_\ell} \biggl( Y_i - 
\frac{1}{\lvert I_{\ell} \rvert} \sum_{j \in I_\ell} 
Y_j \biggr)^2 - \delta \Biggr]. 
\end{align*}
where $\psi$ is a non-decreasing influence function satisfying 
\myeq{eqFund}.

If $\psi$ were replaced with the identity, we would have $\B{E} 
\bigl[ Q_{\delta}(\beta) \bigr] = \beta v - \delta$. The idea is to solve 
$Q_{\delta}(\wh{\beta}) = 0$ in $\wh{\beta}$ and to estimate $v$ by 
$\delta / \wh{\beta}$. Anyhow, for technical reasons, we will adopt
a slightly different definition for $\wh{\beta}$ as well as for the 
estimate of $v$, as we will show now. Let us first get some deviation
inequalities for $Q_{\delta}(\beta)$, derived as usual from 
exponential bounds. It is straightforward to see that
\begin{multline*}
\B{E} \Bigl\{ \exp \bigl[ q \, Q(\beta) \bigr] \Bigr\}  \leq  \prod_{\ell=1}^q 
\Biggl\{ 1 + ( \beta v - \delta) \\ + \frac{1}{2} 
\biggl[ (\beta v-\delta)^2 + \frac{\beta^2}{\lvert I_{\ell} \rvert^2 (\lvert I_{\ell} \rvert - 1)^2} 
\sum_{\substack{i < j \in I_\ell \\ s < t \in I_{\ell}}} \B{E} \Bigl\{ \bigl[ (Y_i - Y_j)^2 
- 2v \bigr] \bigl[(Y_s - Y_{t})^2 - 2 v \bigr]  \Bigr\}  \biggr]  \Biggr\}.
\end{multline*}
We can now compute for any $i \neq j$
\begin{multline*}
\B{E} \Bigl\{ \bigl[ (Y_i - Y_j)^2 - 2 v \bigr]^2 \Bigr\} = 
\B{E} \bigl[ (Y_i - Y_j)^4 \bigr] - 4 v^2 \\ = 2 \kappa v^2 + 6 v^2 - 4 v^2 
= 2 (\kappa + 1) v^2,
\end{multline*}
and for any distinct values of $i$, $j$ and $s$,
\begin{multline*}
\B{E} \Bigl\{ \bigl[ (Y_i - Y_s)^2 - 2 v \bigr] \bigl[ (Y_j - Y_s)^2 
- 2 v\bigr] \Bigr\}  
= \B{E} \bigl[(Y_i - Y_s)^2(Y_j-Y_s)^2 \bigr] - 4 v^2 
\\ \shoveleft{\quad = \B{E} \Bigl\{ \bigl[ (Y_i - m)^2 + 2 (Y_i - m)(Y_s - m) 
+ (Y_s - m)^2 \bigr]} \\ \shoveright{\times \bigl[ (Y_j - m)^2 + 2 (Y_j - m)(Y_s - m) 
+ (Y_s - m)^2 \bigr] \Bigr\} - 4 v^2} \\ 
\shoveleft{\quad = \B{E} \bigl[ (Y_s - m)^4 \bigr] 
+ \B{E} \Bigl\{ (Y_s - m)^2 \bigl[ (Y_i - m)^2 + (Y_j - m)^2 \bigr] \Bigr\} }
\\ + \B{E} \bigl[ (Y_i - m)^2 (Y_j - m)^2 \bigr] - 4 v^2 = (\kappa - 1) v^2.
\end{multline*}

Thus 
\begin{multline*}
\sum_{\substack{i<j \in I_{\ell}\\ s < t \in I_{\ell}}} \B{E} \Bigl\{ \bigl[ (Y_i - Y_j)^2 
- 2v \bigr] \bigl[ (Y_s - Y_t)^2 - 2 v \bigr] \Bigr\} \\ = \lvert I_\ell \rvert ( \lvert I_\ell \rvert - 1) 
(\kappa+1) v^2 + \lvert I_{\ell} \rvert(\lvert I_{\ell} \rvert - 1) ( \lvert I_{\ell} \rvert - 2 )(\kappa -1) v^2 
\\ =   
\lvert I_{\ell} \rvert(\lvert I_{\ell} \rvert - 1)^2  \biggl[ (\kappa -1) 
+ \frac{2}{\lvert I_{\ell} \rvert - 1} \biggr]  v^2.
\end{multline*}
It shows that
\begin{multline*}
\B{E} \Bigl\{ \exp \bigl[ q Q(\beta) \bigr] \Bigr\} \leq  \biggl\{ 1 + ( \beta v-\delta) 
\\ + \frac{1}{2} ( \beta v-\delta)^2 + \frac{\beta^2 v^2}{2 p}
\biggl[ \kappa - 1 + \frac{2}{p-1} \biggr] 
\biggr\}^q.
\end{multline*}
In the same way
\begin{multline*}
\B{E} \Bigl\{ \exp \bigl[ - q Q(\beta) \bigr] \Bigr\} \leq  
\biggl\{ 1 - (\beta v - \delta) \\ + \frac{1}{2} ( \beta v - \delta)^2
+ \frac{\beta^2 v^2}{2p} \biggl[ 
\kappa -1 + \frac{2}{p-1} 
\biggr] \biggr\}^q.
\end{multline*}
Let $\ds \chi =  \kappa -1 + \frac{2}{p-1} \underset{p \rightarrow 
\infty}{\simeq} \kappa -1$. For any given $\beta_1, \beta_2 \in \B{R}_+$,  
with probability at least $1 - 2 \epsilon_1$, 
\begin{align*}
Q(\beta_1) & < \beta_1 v - \delta 
+ \frac{1}{2} (\beta_1 v - \delta)^2
+ \frac{\chi \beta_1^2 v^2 }{2p} + \frac{\log(\epsilon_1^{-1})}{q}, \\
Q(\beta_2) & > \beta_2 v - \delta - \frac{1}{2} (\beta_2 v - \delta )^2 
- \frac{\chi \beta_2^2 v^2}{2 p} - \frac{\log(\epsilon_1^{-1})}{q}.
\end{align*} 
Let us define, for some parameter $y \in \B{R}$,  $\wh{\beta}$ as  
\begin{align*}
Q(\wh{\beta}) & = - y, &
\text{and let us choose} &&
y & = \frac{\chi \delta^2}{2 p} + 
\frac{\log(\epsilon_1^{-1})}{q}, & \text{and} && 
\beta_2 & = \frac{\delta}{v},  
\end{align*}
so that $Q(\beta_2) >  - y$. 
Let us put $\xi = \delta - \beta_1 v$ and let us choose $\xi$ such 
that $Q(\beta_1) < - y$. This implies that $\xi$ is solution of
\begin{align*}
\frac{1+\zeta}{2} \xi^2 
- (1 + \zeta \delta ) \xi + 2 y & \leq 0 & 
\text{where} && \zeta & = \frac{\chi}{p}.
\end{align*}
Provided that $(1 + \zeta \delta)^2 \geq 4(1+\zeta)y$, 
the smallest solution of this equation is
$$
\xi = \frac{4 y}{1 + \zeta \delta + \sqrt{ (1 + \zeta \delta)^2 - 4 (1+\zeta) y}}.
$$
With these parameters, with probability at least $1 - 2 \epsilon_1$, 
$Q\bigl[ (\delta - \xi)/v \bigr] < - y < Q(\delta /v)$, implying that 
$$
\frac{\delta - \xi}{v} \leq \wh{\beta} \leq \frac{\delta}{v}.
$$
Thus putting
$$
\wh{v} = \frac{\sqrt{\delta(\delta - \xi)}}{\wh{\beta}}, 
$$
we get 
$$
\biggl( 1 - \frac{\xi}{\delta} \biggr)^{1/2} \leq \frac{v}{\wh{v}} \leq \biggl( 
1 - \frac{\xi}{\delta} \biggr)^{-1/2}.
$$
In order to minimize $\ds \frac{\xi}{\delta} \simeq \frac{2y}{\delta}$, we are led to 
take $\ds \delta = \sqrt{\frac{2 p \log(\epsilon_1^{-1})}{\chi q}}$.
We get 
\begin{align*}
y & = \frac{2 \log(\epsilon_1^{-1})}{q}, &
\frac{y}{\delta} & = \sqrt{\frac{2 \chi \log(\epsilon_1^{-1})}{n-r}},\\ 
\zeta \delta & = \frac{y}{\delta}, & 
\text{ and } \quad (1 + \zeta) y & = \frac{2 \log(\epsilon_1^{-1})}{q} + \frac{2 \chi \log(\epsilon_1^{-1})}{n-r}.
\end{align*}
Thus the condition becomes 
\begin{equation}
\label{eq4.1}
q \geq 8 \log(\epsilon_1^{-1}) \biggl( 1 + \frac{\chi}{p} \biggr) \Biggl( 
1 + \sqrt{ \frac{2 \chi \log(\epsilon_1^{-1})}{n-r}} \; \Biggr)^{-2}.
\end{equation}
\begin{prop}
\label{prop4.1}
Under condition \eqref{eq4.1}, with probability at least $1 - 2 \epsilon_1$, 
$$
\bigl\lvert \log(v) - \log(\wh{v}) \bigr\rvert \leq - \frac{1}{2} 
\log \biggl( 1 - \frac{\xi}{\delta} \biggr).
$$
\end{prop}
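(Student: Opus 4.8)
The statement is nothing more than the logarithmic repackaging of the two-sided multiplicative bound
$$
\Bigl( 1 - \tfrac{\xi}{\delta} \Bigr)^{1/2} \leq \frac{v}{\wh{v}} \leq \Bigl( 1 - \tfrac{\xi}{\delta} \Bigr)^{-1/2}
$$
derived just above the statement, so almost all the work is already in place; the plan is simply to assemble the high-probability event cleanly, convert the sandwich on $\wh{\beta}$ into a sandwich on $v/\wh{v}$, and take logarithms. First I would fix the event on which the conclusion holds. Applying Markov's inequality to the two exponential moment inequalities for $\exp\bigl[\pm q\, Q(\beta)\bigr]$ yields, each with probability at least $1 - \epsilon_1$, the two displayed deviation bounds for $Q(\beta_1)$ and $Q(\beta_2)$; a union bound places both on a single event of probability at least $1 - 2\epsilon_1$.

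On that event, with the calibrated choices $y = \frac{\chi\delta^2}{2p} + \frac{\log(\epsilon_1^{-1})}{q}$, $\beta_2 = \delta/v$ and $\beta_1 = (\delta - \xi)/v$ (with $\xi$ the smallest root of $\frac{1+\zeta}{2}\xi^2 - (1+\zeta\delta)\xi + 2y = 0$), the two deviation bounds reduce exactly to $Q\bigl[(\delta-\xi)/v\bigr] < -y < Q(\delta/v)$. The key structural remark is that $\beta \mapsto Q(\beta)$ is non-decreasing: each summand is $\psi$ evaluated at an argument affine in $\beta$ with nonnegative slope $\frac{1}{\lvert I_\ell\rvert-1}\sum_{i\in I_\ell}\bigl(Y_i - \frac{1}{\lvert I_\ell\rvert}\sum_{j\in I_\ell}Y_j\bigr)^2 \geq 0$, and $\psi$ is non-decreasing. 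Since $Q(\wh{\beta}) = -y$ by definition, monotonicity converts the value inequalities into $\frac{\delta-\xi}{v} \leq \wh{\beta} \leq \frac{\delta}{v}$: if $\wh{\beta} < \beta_1$ then $Q(\wh{\beta}) \leq Q(\beta_1) < -y$, a contradiction, and symmetrically at the upper end.

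It then remains to feed this into the geometric-mean definition $\wh{v} = \sqrt{\delta(\delta-\xi)}/\wh{\beta}$. Writing $\frac{v}{\wh{v}} = \frac{v\wh{\beta}}{\sqrt{\delta(\delta-\xi)}}$ and using $\delta - \xi \leq v\wh{\beta} \leq \delta$ gives $\sqrt{(\delta-\xi)/\delta} \leq \frac{v}{\wh{v}} \leq \sqrt{\delta/(\delta-\xi)}$, which is precisely the multiplicative bound above; the geometric mean is chosen exactly so that the two-sided error becomes symmetric on the logarithmic scale. Taking logarithms and using $0 < 1 - \frac{\xi}{\delta} < 1$, so that $\log\bigl(1-\xi/\delta\bigr) < 0$, yields $\bigl\lvert \log v - \log \wh{v}\bigr\rvert \leq -\frac{1}{2}\log\bigl(1 - \frac{\xi}{\delta}\bigr)$.

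There is no deep obstacle here; the care needed is entirely in the bookkeeping, and two points must be checked explicitly. The first is that $\xi$ is real, which is exactly where hypothesis \eqref{eq4.1} enters: with $n - r = pq$ and the evaluated quantities $y = \frac{2\log(\epsilon_1^{-1})}{q}$ and $\zeta\delta = \frac{y}{\delta} = \sqrt{\frac{2\chi\log(\epsilon_1^{-1})}{n-r}}$, one has $(1+\zeta)y = \frac{2\log(\epsilon_1^{-1})}{q}\bigl(1 + \frac{\chi}{p}\bigr)$, so the discriminant condition $(1+\zeta\delta)^2 \geq 4(1+\zeta)y$ rearranges term by term into \eqref{eq4.1}. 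The second is $0 < \xi < \delta$, needed for the square roots and the final logarithm to be well-defined; this one verifies directly from the closed form for $\xi$, using that $\zeta\delta = \sqrt{2\chi\log(\epsilon_1^{-1})/(n-r)}$ is small in the regime of interest. The only genuinely substantive step is the monotonicity argument turning the function-value inequalities into the $\wh{\beta}$-sandwich, and it works precisely because $(Y_i-Y_j)^2 \geq 0$ makes $Q$ non-decreasing even though it need not be strictly increasing.
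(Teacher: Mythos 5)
Your proposal is correct and follows essentially the same route as the paper, whose proof of Proposition \ref{prop4.1} is precisely the inline derivation preceding it: the union bound on the two deviation inequalities for $Q(\beta_1)$ and $Q(\beta_2)$, the calibrated choices of $y$, $\beta_2=\delta/v$ and $\xi$ as the smallest root of the quadratic (with condition \eqref{eq4.1} equivalent to the discriminant condition $(1+\zeta\delta)^2 \geq 4(1+\zeta)y$), the sandwich $(\delta-\xi)/v \leq \wh{\beta} \leq \delta/v$, and the geometric-mean definition of $\wh{v}$ yielding the symmetric logarithmic bound. Your explicit justification of the monotonicity of $\beta \mapsto Q(\beta)$ (via $\psi$ non-decreasing and the nonnegative within-block empirical variances) is a point the paper leaves implicit, so it is a welcome addition rather than a deviation.
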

A simpler result is obtained by choosing $\ds \xi = 2y ( 1 + 2y)$, 
(the values of $y$ and $\delta$ being kept the same, so that 
we modify only the choice of $\beta_1$ through a different choice of $\xi$).
In this case, $Q(\beta_1) < -y$ as soon as 
$$
(1 + 2 y)^2 \leq \frac{ 2 + 2 \zeta \delta + \zeta \delta / y}{ 
1 + \zeta} = 2 + \frac{ 2 \zeta \delta + \zeta \delta / y - 2 \zeta}{1 + \zeta},
$$
which is true as soon as $\ds (1 + 2 y)^2 \leq 2$ and 
$\ds \frac{\delta}{y} \geq 2$, 
yielding the simplified condition
\begin{equation}
\label{eq4.2}
\log(\epsilon_1^{-1})  \leq \min \biggl\{ \frac{q}{4(1 + \sqrt{2})},
\frac{n-r}{8 \chi} \biggr\}.
\end{equation}
In this case, 
we get with probability at least $1 - 2 \epsilon_1$
that 
$$
\lvert \log(v) - \log(\wh{v}) \rvert \leq - \frac{1}{2}  
\log \biggl( 1 - \frac{2 y(1 + 2y)}{\delta} \biggr) 
\simeq \frac{y}{\delta}.
$$
\begin{prop}
\label{prop4.2}
Under condition \eqref{eq4.2}, 
with probability at least $1 - 2 \epsilon_1$, 
\begin{multline*}
\lvert \log(v) - \log(\wh{v}) \rvert \\ \leq - \frac{1}{2} 
\log \Biggl[ 1 - 2 \sqrt{\frac{ 2 \chi \log(\epsilon_1^{-1})}{n-r}} 
\biggl(1 + \frac{4 \log(\epsilon_1^{-1})}{q} \biggr) \Biggr]  
\\ \simeq \sqrt{ \frac{2 \chi \log(\epsilon_1^{-1})}{n}}.
\end{multline*}
\end{prop}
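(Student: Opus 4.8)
The plan is to reuse, essentially verbatim, the bracketing argument already developed for $Q$, now specialized to the explicit offset $\xi = 2y(1+2y)$, and then to make every quantity explicit in terms of $n$, $p$, $q$, $r$, $\chi$ and $\log(\epsilon_1^{-1})$. All the work takes place on the single event of probability at least $1 - 2\epsilon_1$ on which the two deviation inequalities for $Q(\beta_1)$ and $Q(\beta_2)$ hold simultaneously; I would fix this event at the outset and invoke the probability bound only once, at the end.

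First I would record the two bracketing facts. Since $\beta_2 = \delta/v$ gives $\beta_2 v - \delta = 0$, the lower inequality yields $Q(\delta/v) > -\chi\delta^2/(2p) - \log(\epsilon_1^{-1})/q = -y$ with no computation. For the other side, setting $\beta_1 = (\delta-\xi)/v$ gives $\beta_1 v - \delta = -\xi$, and feeding this into the upper inequality, together with $y = \chi\delta^2/(2p) + \log(\epsilon_1^{-1})/q$ and $\zeta = \chi/p$, shows that $Q(\beta_1) < -y$ is guaranteed once $\frac{1+\zeta}{2}\xi^2 - (1+\zeta\delta)\xi + 2y \le 0$.

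The one step demanding genuine care is to verify that the explicit choice $\xi = 2y(1+2y)$ meets this quadratic constraint under condition \eqref{eq4.2}. I would divide the constraint by $2y$ and set $u = 1+2y$, turning it into $(1+\zeta)yu^2 \le (1+\zeta\delta)u - 1$; since the right-hand side expands to $2y + \zeta\delta + 2\zeta\delta y$, dividing through by $y$ recasts the constraint as $(1+2y)^2 \le (2 + 2\zeta\delta + \zeta\delta/y)/(1+\zeta)$. Writing the bound as $2 + \zeta(2\delta + \delta/y - 2)/(1+\zeta)$, I would observe that $\delta/y \ge 2$ makes the correction non-negative and that $(1+2y)^2 \le 2$ then makes the whole inequality hold. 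Substituting $y = 2\log(\epsilon_1^{-1})/q$ and $\delta/y = \sqrt{(n-r)/(2\chi\log(\epsilon_1^{-1}))}$ converts these two requirements into the two clauses of \eqref{eq4.2}, using $1/(1+\sqrt{2}) = \sqrt{2}-1$ for the first. I expect this reduction, and in particular checking that the two clean sufficient conditions $(1+2y)^2 \le 2$ and $\delta/y \ge 2$ really do dominate the exact quadratic bound, to be the main obstacle; the rest is bookkeeping.

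With both brackets in hand, monotonicity of $\beta \mapsto Q(\beta)$ (its summands are $\psi$ applied to arguments increasing in $\beta$, and $\psi$ is non-decreasing) together with $Q(\wh{\beta}) = -y$ forces $(\delta-\xi)/v \le \wh{\beta} \le \delta/v$. The definition $\wh{v} = \sqrt{\delta(\delta-\xi)}/\wh{\beta}$ then gives $(1-\xi/\delta)^{1/2} \le v/\wh{v} \le (1-\xi/\delta)^{-1/2}$ after simplifying the two square roots, and taking logarithms produces $|\log(v) - \log(\wh{v})| \le -\frac{1}{2}\log(1-\xi/\delta)$, which is valid because \eqref{eq4.2} guarantees $\xi/\delta < 1$. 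I would finish by substituting $\xi/\delta = (2y/\delta)(1+2y)$ with $2y/\delta = 2\sqrt{2\chi\log(\epsilon_1^{-1})/(n-r)}$ and $2y = 4\log(\epsilon_1^{-1})/q$ to reach the displayed bound, and read off the asymptotic from $-\frac{1}{2}\log(1-x) \simeq x/2$ as $n \to \infty$, using $n - r \simeq n$ and $\log(\epsilon_1^{-1})/q \to 0$.
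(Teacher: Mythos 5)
Your proof is correct and takes essentially the same route as the paper, which likewise reduces $Q(\beta_1) < -y$ to the quadratic constraint $\tfrac{1+\zeta}{2}\xi^2 - (1+\zeta\delta)\xi + 2y \leq 0$ and checks that $\xi = 2y(1+2y)$ satisfies it via the two sufficient conditions $(1+2y)^2 \leq 2$ and $\delta/y \geq 2$, which translate precisely into the two clauses of \eqref{eq4.2}. The only slip is your parenthetical claim that \eqref{eq4.2} guarantees $\xi/\delta < 1$: at the boundary of both clauses one only gets $\xi/\delta \leq \sqrt{2}$, but since the stated bound is vacuous whenever $1 - \xi/\delta \leq 0$, this costs nothing --- the paper is equally silent on the point.
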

Recalling that $\ds \chi = \kappa - 1 + \frac{2}{p-1}$, we can choose 
in the previous proposition the 
approximately optimal block size 
$$
p = \biggl\lfloor \sqrt{ \frac{\ds n }{\ds (\kappa-1) \bigl[ 4 \log(\epsilon_1^{-1}) 
+ 1/2 \bigr]}} \biggr\rfloor.
$$
\begin{cor}
\label{cor4.3}
For this choice of parameter, as soon as the kurtosis (or its upper bound)
$\kappa \geq 3$, 
under the condition 
\begin{equation}
\label{eq4.4}
\log(\epsilon_1^{-1}) \leq  
\frac{n}{36(\kappa-1)} - \frac{1}{8},
\end{equation}
with probability at least $1 - 2 \epsilon_1^{-1}$, 
\begin{multline*}
\bigl\lvert \log(v) - \log(\wh{v}) \bigr\rvert \\ \leq - \frac{1}{2} 
\log \Biggl[ 1 - 2 \sqrt{ \frac{ 2 (\kappa-1) \log(\epsilon_1^{-1})}{n}} 
\exp \Biggl( 4 \sqrt{ \frac{4 \log(\epsilon_1^{-1}) + 1/2}{
(\kappa-1) n}} \; \Biggr) \Biggr] \\ 
\underset{n \rightarrow \infty}{\simeq} \sqrt{ \frac{2(\kappa-1) \log(
\epsilon_1^{-1})}{n}}.
\end{multline*}
\end{cor}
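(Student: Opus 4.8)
The plan is to specialise Proposition \ref{prop4.2} to the prescribed block size and to check that nothing degrades. Throughout write $L = \log(\epsilon_1^{-1})$ and $A = \sqrt{(4L+1/2)/[(\kappa-1)n]}$, so that the chosen block size is $p = \lfloor p_0 \rfloor$ with $p_0 = 1/[(\kappa-1)A]$, and the target bound is $-\tfrac12\log\bigl[1 - 2\sqrt{2(\kappa-1)L/n}\,\exp(4A)\bigr]$. The first observation is that condition \eqref{eq4.4} is exactly the clean reformulation $4L + 1/2 \leq n/[9(\kappa-1)]$, that is $(\kappa-1)A \leq 1/3$, or equivalently $p_0 \geq 3$; combined with $\kappa \geq 3$ (so $\kappa - 1 \geq 2$) it also gives $A \leq 1/[3(\kappa-1)] \leq 1/6$ and $p \geq 3$. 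Two things then remain: that \eqref{eq4.4} implies condition \eqref{eq4.2}, so that Proposition \ref{prop4.2} applies; and that the bound it delivers, written through $\chi = \kappa - 1 + 2/(p-1)$ and $n - r = pq$, is no larger than the simpler expression claimed here.

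For the first point I would use $p_0 - 1 \leq p \leq p_0$ to get two-sided control of $q = (n-r)/p$ and of $\chi$. Since $p_0 \geq 3$ and $A \leq 1/6$, the correction $2/[(\kappa-1)(p-1)]$ is at most a small multiple of $A$, so $\chi \leq \tfrac43(\kappa-1)$, while $q$ is of order $\sqrt{n(\kappa-1)(4L+1/2)} \geq 2\sqrt{n(\kappa-1)L}$. The requirement $L \leq q/[4(1+\sqrt2)]$ then reduces, after squaring, to $L \leq n(\kappa-1)/[4(3+2\sqrt2)]$, and $L \leq (n-r)/(8\chi)$ reduces to about $L \leq 3n/[28(\kappa-1)]$; both follow from $L \leq n/[36(\kappa-1)]$ once $\kappa - 1 \geq 2$, the constant $36$ being generous enough to absorb the floor corrections.

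For the second point, since $t \mapsto -\tfrac12\log(1-t)$ is increasing on $(0,1)$ and \eqref{eq4.4} keeps both arguments in $(0,1)$, it suffices to compare the arguments, i.e. to show
\begin{equation*}
\sqrt{\frac{\chi}{\kappa-1}}\,\sqrt{\frac{n}{n-r}}\,\Bigl(1 + \frac{4L}{q}\Bigr) \leq \exp(4A).
\end{equation*}
I would bound each of the three factors by an exponential, using $\sqrt{1+x} \leq e^{x/2}$ on $\sqrt{\chi/(\kappa-1)} = \sqrt{1 + 2/[(\kappa-1)(p-1)]}$ and on $\sqrt{n/(n-r)} = (1-r/n)^{-1/2}$, and $1 + x \leq e^x$ on the last factor. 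The claim then becomes
\begin{equation*}
\frac{1}{(\kappa-1)(p-1)} + \frac{r}{n} + \frac{4L}{q} \leq 4A.
\end{equation*}
With $p \simeq 1/[(\kappa-1)A]$ and $q \simeq (4L+1/2)/A$ one finds the first term $\simeq A$, the third equal to $\tfrac{4L}{4L+1/2}A \leq A$, and the second $r/n < p/n$ at most about $2A$ but negligible precisely when $L$ is small (where the third vanishes); in every regime the sum stays below $4A$.

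The main obstacle is purely the bookkeeping forced by the floor in $p$: each ``$\simeq$'' above conceals an error that has to be converted into a genuine inequality, and the argument closes only because the explicit constants --- the $4$ inside $\exp(4A)$ and the $36$ together with $\kappa \geq 3$ in \eqref{eq4.4} --- are calibrated to swallow those errors. Once the inequalities are pinned down, the final asymptotic equivalence is immediate: fixing $\epsilon_1$ and letting $n \to \infty$ sends $A \to 0$, hence $\exp(4A) \to 1$, so the bound behaves like $-\tfrac12\log\bigl(1 - 2\sqrt{2(\kappa-1)L/n}\bigr) \simeq \sqrt{2(\kappa-1)\log(\epsilon_1^{-1})/n}$.
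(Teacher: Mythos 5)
Your proposal follows essentially the same route as the paper's proof: you check that condition \eqref{eq4.4} implies \eqref{eq4.2}, then reduce the comparison of the two bounds to showing $\sqrt{\chi/(\kappa-1)}\,\sqrt{n/(n-r)}\,(1+4L/q) \leq \exp(4A)$ by exponentiating each factor, exactly as the paper does via its exponent sum $\frac{1}{(\kappa-1)(p-1)} + \frac{p-1}{2(n-p+1)} + \frac{4\log(\epsilon_1^{-1})p}{n-p+1}$ controlled through $p-1 \geq x/2$ and $n-p+1 \geq n/2$. Your observation that the second and third exponent terms trade off against each other (the combination scales as $2x(4L+1/2)/n = 2A$) is precisely the mechanism by which the paper's constant $4$ closes, so up to routine tightening of your approximate constants the argument is the paper's own.
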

This is the asymptotics we hoped for, since the variance of $(Y_i - m)^2$ 
is equal to $(\kappa-1) v^2$. The proof is page \pageref{proof4.3}.

Let us plot some curves, showing the tighter bounds
of Proposition \thmref{prop4.1}, with optimal choice of $p$.
We compare our deviation bounds 
with the exact deviation quantiles 
of the variance estimate of equation \myeq{eq4.1bis} applied to 
a Gaussian sample, (given by a $\chi^2$ distribution). This 
demonstrates that we can stay of the same order under much weaker assumptions.

\noindent \mbox{} \hfill \raisebox{-1cm}[9cm][0cm]{\includegraphics{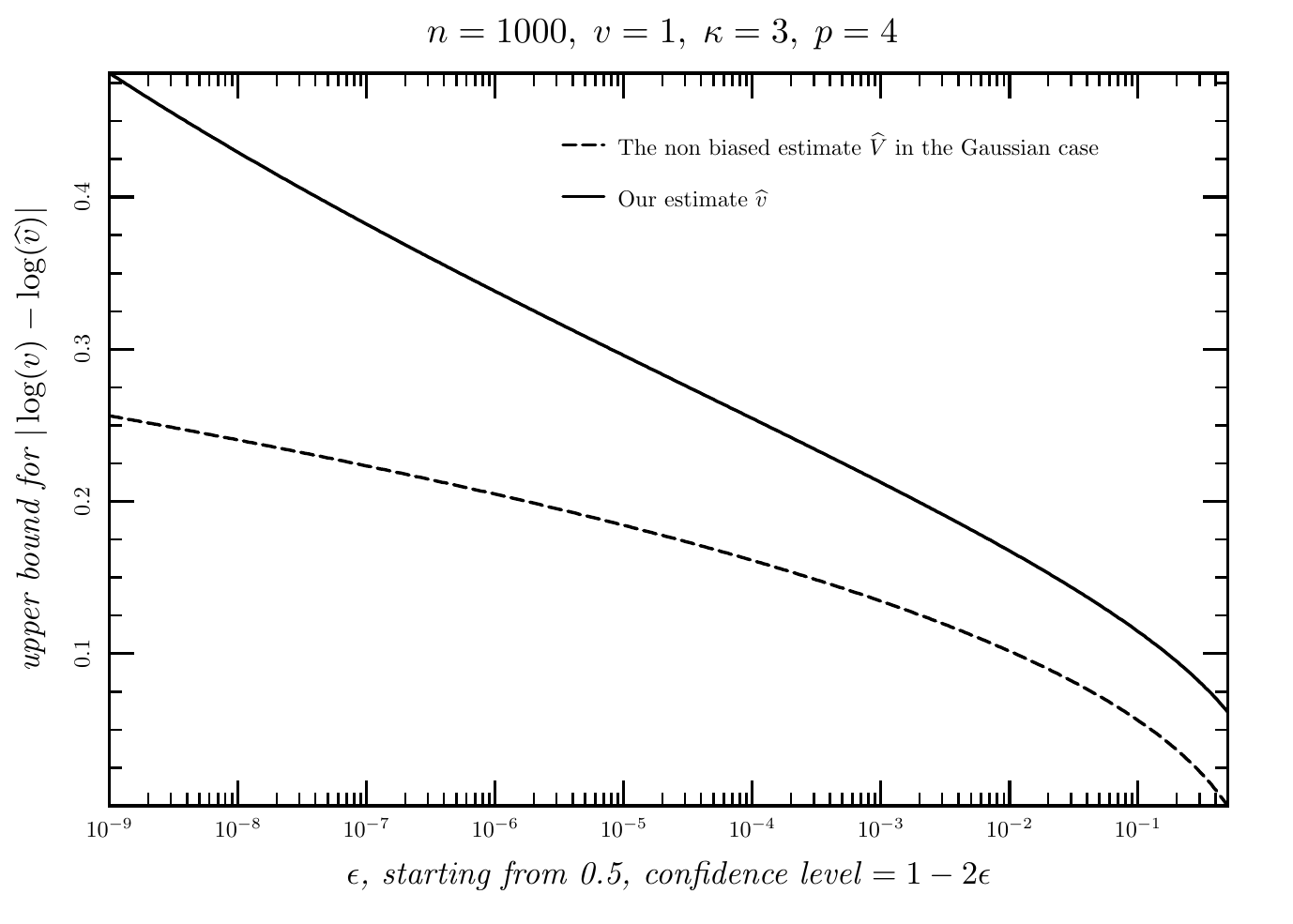}}
\hfill \mbox{}

\noindent \mbox{} \hfill \raisebox{-0.5cm}[9cm][0cm]{\includegraphics{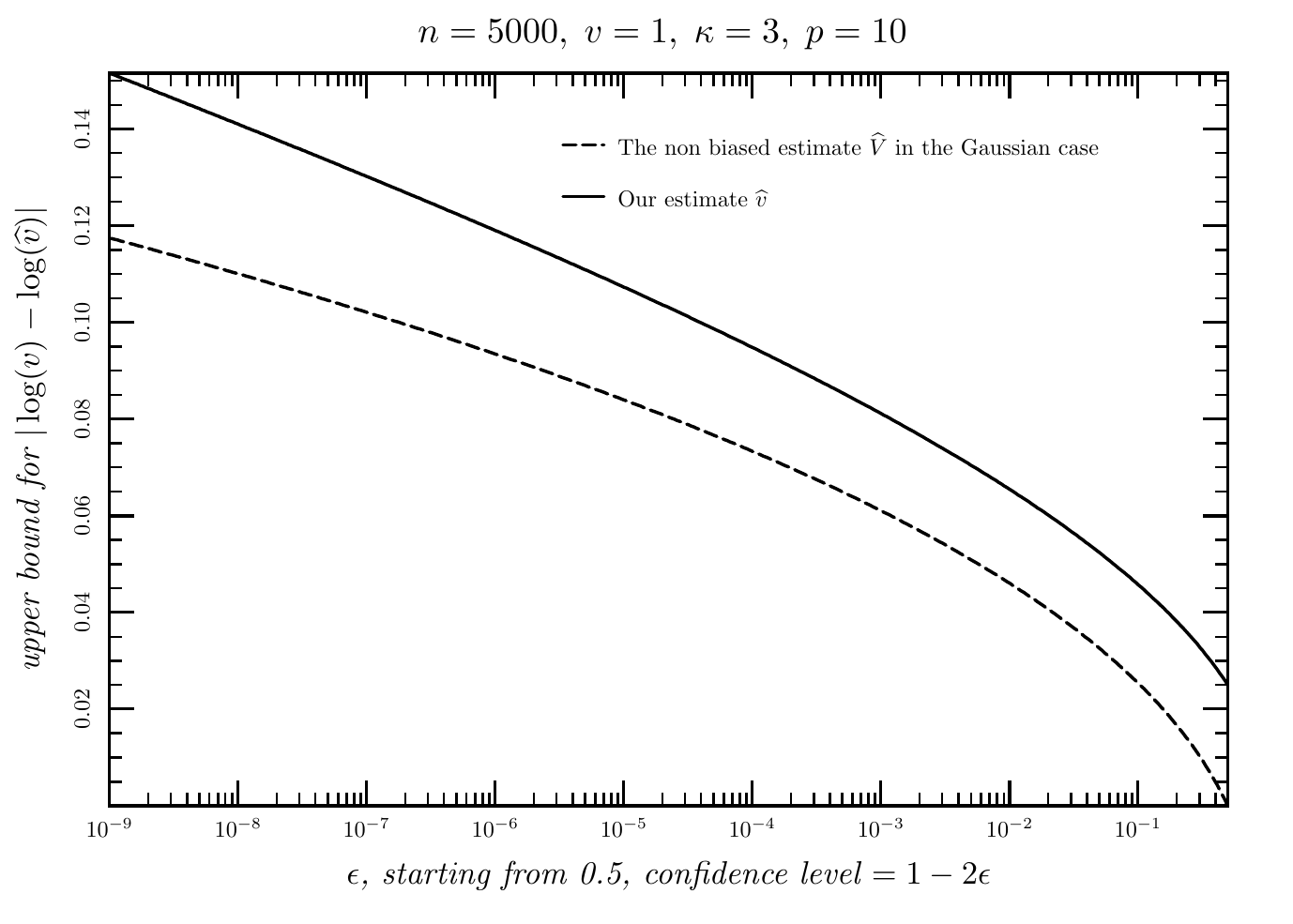}}
\hfill \mbox{}

\noindent \mbox{} \hfill \raisebox{-0.8cm}[9cm][0cm]{\includegraphics{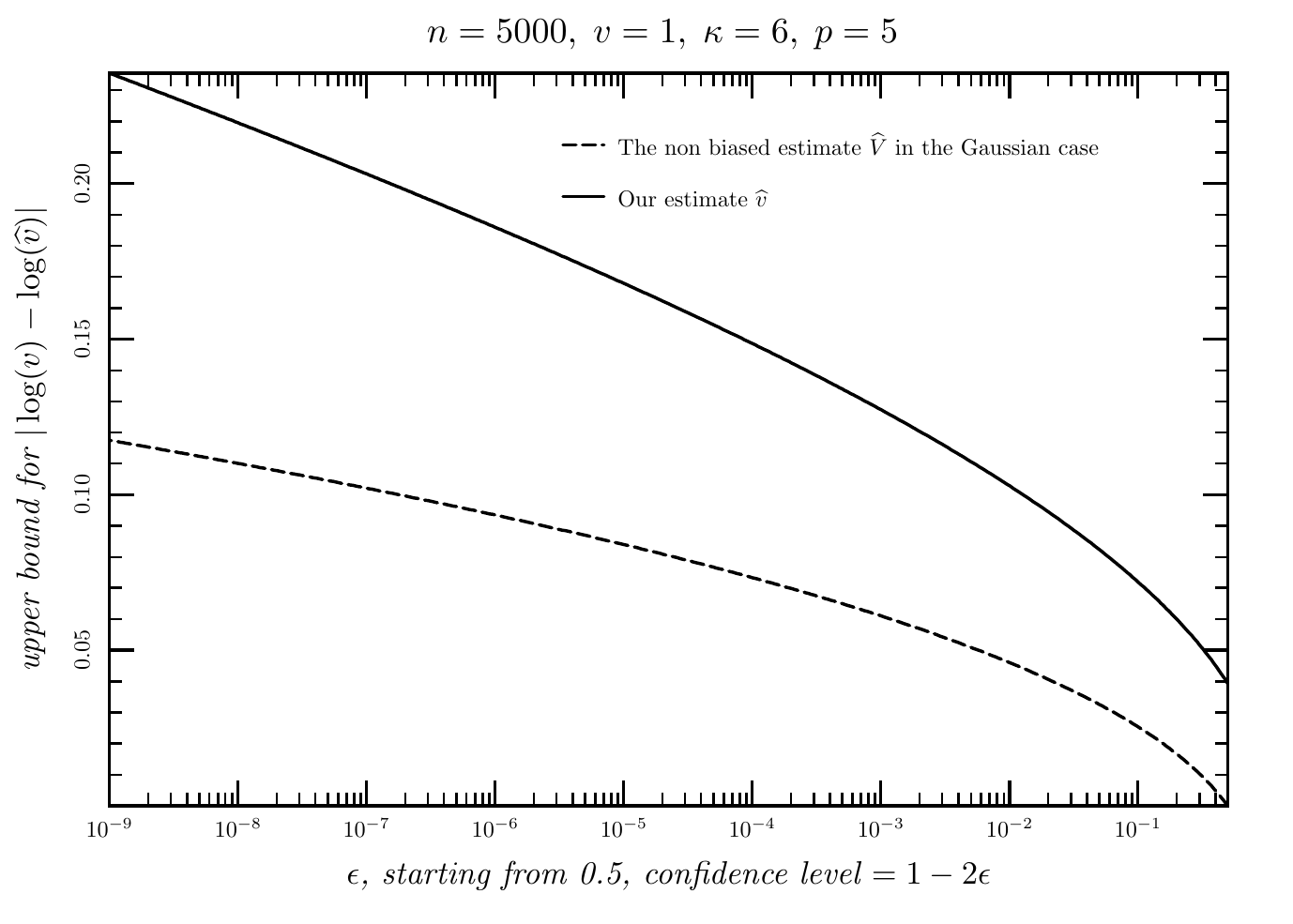}}
\hfill \mbox{}

\subsection{Mean estimate under a kurtosis assumption}

Here, we are going to plug a variance 
estimate $\wh{v}$ into a mean estimate. Let us therefore assume
that $\wh{v}$ is a variance estimate such that 
with probability at least $1 - 2 \epsilon_1$, 
$$
\bigl\lvert \log(v) - \log(\wh{v}) \bigr\rvert 
\leq \zeta.
$$ 
This estimate may for example be the one defined in the previous section.
Let $\wh{\alpha}$ be some estimate of the desired value of the 
parameter $\alpha$, to be defined later as a function of $\wh{v}$. 
Let us define $\wh{\theta} = \wh{\theta}_{\wh{\alpha}}$ by 
\begin{equation}
\label{eq4.11.2}
r(\wh{\theta}) = \frac{1}{n\wh{\alpha}}\sum_{i=1}^n \psi \bigl[ \wh{\alpha} (Y_i - \wh{\theta}) \bigr] 
= 0,
\end{equation}
where $\psi$ is the {\em narrow} influence function 
defined by equation \myeq{eq1.2}.
As usual, we are looking for non-random values 
$\theta_-$ and $\theta_+$ such that with large
probability $r(\theta_+) < 0 < r(\theta_-)$,
implying that $\theta_- < \wh{\theta} < \theta_+$.
But there is a new difficulty, caused by the fact 
that $\wh{\alpha}$ will be an estimate depending 
on the value of the sample. This problem will be 
solved with the help of PAC-Bayes inequalities.

To take advantage of PAC-Bayes theorems, we are 
going to compare $\wh{\alpha}$ with a perturbation 
$\wt{\alpha}$ built with the help of some supplementary
random variable.
Let indeed $U$ be some uniform real random variable on the  
interval $(-1,+1)$, independent from everything else. 
Let us consider 
$$
\wt{\alpha} = \wh{\alpha} + x \alpha \sinh \bigl( 
\zeta/2 \bigr) U.
$$
Let $\rho$ be the distribution of $\wt{\alpha}$ 
given the sample value. We are going to compare
this {\em posterior} distribution (meaning 
that it depends on the sample), with a {\em prior}
distribution that is independent from the sample. 
Let $\pi$ be the uniform probability distribution 
on the interval $\Bigl( \alpha \bigl[ \exp \bigl( - \zeta/2) - x 
\sinh \bigl( \zeta /2 \bigr)  
\bigr] ,
\alpha \bigl[ \exp \bigl( \zeta / 2 \bigr) + x \sinh 
\bigl( \zeta / 2 \bigr)  \bigr] \Bigr)$. 
Let us assume that $\wh{\alpha}$ and $\alpha$ are defined 
with the help of some positive constant $c$ as 
$\ds \wh{\alpha} = \sqrt{\frac{c}{\wh{v}}}$ and 
$\ds \alpha = \sqrt{\frac{c}{v}}$. In this case,
with probability at least $1 - 2 \epsilon_1$
\begin{equation}
\label{eq4.11}
\bigl\lvert \log (\alpha) - \log(\wh{\alpha}) \bigr\rvert 
\leq \frac{\zeta}{2}.
\end{equation}
As a result, with probability at least $1 - 2 \epsilon_1$,
$$
\C{K}( \rho, \pi) = \log \bigl( 1 + x^{-1} \bigr).
$$
Indeed, whenever equation \eqref{eq4.11} holds, 
the (random) support of $\rho$ is included in the 
(fixed) support of $\pi$, so that the relative entropy 
of $\rho$ with respect to $\pi$ is given 
by the logarithm of the ratio of the lengths of 
their supports.  

Let us now upper-bound $\psi \bigl[ \wh{\alpha}(Y_i - \theta) \bigr]$ 
as a suitable function of $\rho$. 
\begin{lemma}
\label{lem3.4}
For any posterior distribution $\rho$ and any $f \in L_2(\rho)$, 
$$
\psi \bigl[ \tint \rho(d \beta) f(\beta) \bigr] \leq \int \rho(d \beta) 
\log \Bigl[ 1 + f(\beta) + \frac{1}{2} f(\beta)^2 + \frac{a}{2} 
\Var_{\rho}(f) \Bigr],
$$
where $a \leq 4.43$ is the numerical constant of equation \myeq{eq3.4}.
\end{lemma}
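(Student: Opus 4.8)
The plan is to use the defining upper bound on the influence function, namely $\psi(x) \le \log\bigl(1 + x + x^2/2\bigr)$ from \eqref{eqFund}, to reduce the claim to a reverse-Jensen inequality, and then to settle that inequality by a moment (tangent-parabola) argument. Writing $\Psi(t) = 1 + t + t^2/2$, $\bar{f} = \int \rho(d\beta)\, f(\beta)$ and $V = \Var_\rho(f)$, the bound $\psi(\bar{f}) \le \log \Psi(\bar{f})$ shows that it is enough to prove $\log \Psi(\bar{f}) \le \int \rho(d\beta)\, \log\bigl(\Psi(f(\beta)) + \tfrac{a}{2} V\bigr)$. The only elementary identity I would record at this stage is $\int \rho(d\beta)\, \Psi(f(\beta)) = \Psi(\bar{f}) + \tfrac12 V$, obtained by expanding the square; it already explains why a correction of order $V$ is exactly what is needed on the right-hand side, and why the factor multiplying it should be close to $1$ but slightly larger.

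First I would note that, since $\Psi \ge \tfrac12 > 0$, the integrand is the logarithm of a positive quantity, and that $t \mapsto \log\bigl(\Psi(t) + c\bigr)$ grows only like $2\log\lvert t\rvert$ at infinity, hence is eventually concave and in particular \emph{not} convex; a naive application of Jensen's inequality points the wrong way, which is precisely the role of the variance term. To obtain a lower bound on $\int \rho\, \log\bigl(\Psi(f) + \tfrac{a}{2}V\bigr)$ that uses only the first two moments of $f$, I would pass to the dual, moment-problem description: because this map is bounded below and the integral of any quadratic against $\rho$ depends only on the mean and the variance, it suffices to \emph{exhibit} one quadratic $q(t) = q_0 + q_1 t + q_2 t^2$ lying below $t \mapsto \log\bigl(\Psi(t) + \tfrac{a}{2}V\bigr)$ for which $q_0 + q_1 \mu + q_2(\mu^2 + V) \ge \log \Psi(\mu)$, with $\mu = \bar{f}$; integrating the pointwise inequality $q \le \log(\Psi(\cdot) + \tfrac{a}{2}V)$ against $\rho$ then annihilates the linear term (mean zero) and reproduces the variance term through $q_2 V$. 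By generalized Chebyshev duality such a $q$, centred at $\mu$, is in fact optimal, so this route cannot be lossy.

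The main work, and the main obstacle, is the explicit construction of this downward-opening tangent parabola and the verification that it stays below $t \mapsto \log\bigl(\Psi(t) + \tfrac{a}{2}V\bigr)$ for \emph{all} $\mu$ and $V$ simultaneously. I would centre the parabola at $t = \mu$, fix its slope $q_1$ so that the contact there is (nearly) tangential, and let $q_2 \le 0$ absorb the downward curvature forced by the logarithmic growth; the value at $t = \mu$ is reconciled with $\log\Psi(\mu)$ through $\log\bigl(\Psi(\mu) + \tfrac{a}{2}V\bigr) = \log \Psi(\mu) + \log\bigl(1 + \tfrac{aV}{2\Psi(\mu)}\bigr)$. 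After normalising out the scale of $V$, the requirement collapses to a one-parameter family of one-dimensional inequalities, and the smallest admissible $a$ is exactly the quantity defined in \eqref{eq3.4}, whose numerical estimate gives $a \le 4.43$. As a sanity check I would first test the symmetric two-point law $f \in \{\mu \pm h\}$, for which the condition reduces to $\tfrac{(1+a)^2}{4} h^2 \ge (1-a)\Psi(\mu) - 1$ and already holds with $a = 1$; this confirms that the binding configurations are the \emph{asymmetric} ones, which is where the larger constant $4.43$ originates and where the curvature bookkeeping must be carried out with care.
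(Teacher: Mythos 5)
Your proof breaks at its very first step, and the later tangent-parabola work cannot repair it, because the statement you reduce to is false. Writing $\Psi(t)=1+t+t^{2}/2$, you replace $\psi\bigl(\tint\rho f\bigr)$ by $\log\Psi\bigl(\tint\rho f\bigr)$ via \eqref{eqFund} and then try to prove
$$
\log \Psi(\mu) \;\leq\; \int \rho(d\beta)\, \log \Bigl[ \Psi\bigl(f(\beta)\bigr) + \tfrac{a}{2}\Var_{\rho}(f) \Bigr], \qquad \mu = \tint \rho(d\beta) f(\beta).
$$
But in this section $\psi$ is the \emph{narrow} influence function of \myeq{eq1.2}, bounded by $\log 2$, while $\log\Psi(\mu)$ is unbounded, so this reduction is lossy by an unbounded amount, and the reduced inequality fails for $a = 4.43$ (indeed for any fixed $a$). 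Concretely, take $\rho = (1-p)\delta_{T} + p\,\delta_{-1}$, so that $\Psi(-1)=1/2$, $\mu = T - p(T+1)$ and $\Var_{\rho}(f) = p(1-p)(T+1)^{2}$. At $p=0$ both sides equal $\log\Psi(T)$, and differentiating in $p$ at $p=0$ gives
$$
\frac{d}{dp}\bigl(\text{lhs} - \text{rhs}\bigr)\Big|_{p=0} \;=\; \Bigl( \frac{a}{2} + 1 \Bigr) \frac{(T+1)^{2}}{\Psi(T)} \;-\; \log\Psi(T) \;-\; \log 2 \;\leq\; a + 2 - \log 2 - \log \Psi(T),
$$
using $(T+1)^{2} \leq 2\,\Psi(T)$. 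This is negative as soon as $\Psi(T) > e^{a+2}/2 \approx 310$ for $a=4.43$, e.g.\ $T \geq 25$; numerically, $T=40$, $p=10^{-5}$ already violates the reduced inequality, while the lemma itself holds there trivially since $\psi(\mu) \leq \log 2 \ll \log\Psi(40)$. So no choice of minorizing quadratic can exist uniformly in $(\mu, V)$: your deferred ``main work'' is the construction of something that is not there. Your two remarks that the route ``cannot be lossy'' (the Chebyshev duality controls only the moment problem for the reduced inequality, not the first replacement, which is exactly where the loss occurs) and that the optimal $a$ is ``exactly'' that of \myeq{eq3.4} are both unfounded; your symmetric two-point check is computed correctly but is misleading, since the asymmetric configurations do not merely make the constant bigger --- they falsify the reduced claim outright.

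The feature your reduction discards --- boundedness of $\psi$ --- is precisely what the paper's proof exploits, and any repair must retain it. The paper squeezes an intermediate function $\chi$ between $\psi$ and $\log\Psi$ with two properties your $\log\Psi$ lacks: curvature bounded below, $\chi'' \geq -1/4$, and bounded range, $\sup\chi \leq 2.103$. Jensen's inequality applied to the convex function $x \mapsto \chi(x) + \tfrac{1}{8}(x - x_{*})^{2}$, with $x_{*} = \mu$, yields the variance correction $\tfrac18 \Var_{\rho}(f)$, and the bounded range caps it, giving $\psi(\mu) \leq \tint \rho\, \chi(f) + \min\bigl\{ \log 4, \tfrac18 \Var_{\rho}(f) \bigr\}$; only then is the elementary pointwise bound $\chi(x) + \min\{\log 4, y/8\} \leq \log\bigl(1 + x + x^{2}/2 + a y/2\bigr)$ invoked, and it is this last step that produces the constant $a$ of \myeq{eq3.4}. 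The cap $\min\{\log 4, \cdot\}$ is not cosmetic: without it, the additive term $\tfrac{a}{2}V$ inside the logarithm cannot compensate the Jensen gap of a function growing like $2\log\lvert t \rvert$, which is exactly what the counterexample above detects. If you want to salvage a duality-style argument, it must be run for the function $\chi$ (or another bounded majorant of $\psi$ below $\log\Psi$), with the variance term capped, not for $\log\Psi$ itself.
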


Applying this inequality to $f(\beta) = \beta (Y_i - \theta)$ in 
the first place and $f(\beta) = \beta( \theta - Y_i)$ to get the 
reversed inequality, we obtain
\begin{align}
\psi \bigl[ \wh{\alpha}(Y_i - \theta) \bigr] 
& \leq \int \rho(d \beta) \log \Bigl\{ 1 + \beta( Y_i - \theta) \nonumber 
 \\ \label{eq3.5}& \qquad + \frac{1}{2} \Bigl[ \beta^2 + \frac{a}{3} x^2 \alpha^2 
\sinh(\zeta/2)^2 \Bigr] (Y_i - \theta)^2 \Bigr\}, \\
\psi \bigl[ \wh{\alpha}(\theta - Y_i) \bigr] 
& \leq \int \rho(d \beta) \log \Bigl\{ 1 + \beta(\theta - Y_i) \nonumber
 \\ \label{eq3.6} & \qquad + \frac{1}{2} \Bigl[ \beta^2 + \frac{a}{3} x^2 \alpha^2 
\sinh(\zeta/2)^2 \Bigr] (Y_i - \theta)^2 \Bigr\}.
\end{align}

Let us now recall the fundamental PAC-Bayes inequality, concerned 
with any function $(\beta, y) \mapsto f(\beta, y) \in \B{L}_1(\pi \otimes \B{P})$, where $\B{P}$ is 
the distribution of $Y$, such that $\inf f > -1$.   
\begin{multline*}
\B{E} \biggl\{ \exp \biggl( \int \rho(d \beta) \sum_{i=1}^n 
\log \bigl[ 1 + f(\beta, Y_i) \bigr] - n 
\log \bigl[ 1 + \B{E} \bigl( f(\beta, Y) \bigr) \bigr]  - 
\C{K}(\rho, \pi) \biggr) \biggr\}  
\\ \leq \B{E} \biggl\{ \int \pi(d \beta) \exp \biggl( 
\sum_{i=1}^n \log \bigl[ 1 + f(\beta, Y_i) \bigr] 
- n \log \bigl[ 1 +  \B{E} \bigl( f(\beta, Y) \bigr)  
\bigr] \biggr) \biggr\}  
= 1, 
\end{multline*}
according to \cite[page 159]{Cat01} and Fubini's lemma 
for the last equality.
Thus, from Chebyshev's inequality, with probability at least $1 - \epsilon_2$, 
\begin{multline*}
\int \rho( d \beta) \sum_{i=1}^n \log \bigl[ 1 + f(\beta, Y_i) \bigr] 
\\ < n \int \rho( d \beta) \log \bigl[ 1 + \B{E} \bigl( f(\beta, Y) \bigr) \bigr] + 
\C{K}(\rho, \pi) + \log \bigl(\epsilon_2^{-1} \bigr) 
\\ \leq n \int \rho(d \beta) \B{E} \bigl( f(\beta, Y) \bigr) + 
\C{K}(\rho, \pi) + \log \bigl(\epsilon_2^{-1}\bigr).
\end{multline*}

Applying this inequality to the case we are interested in, 
that is to equations \eqref{eq3.5} and \eqref{eq3.6}, 
we obtain with probability at least $1 - 2 \epsilon_1 - 2 \epsilon_2$ that 
\begin{multline*}
\wh{\alpha} r ( \theta_+) 
< \wh{\alpha} (m-\theta_+) +  
\frac{1}{2} \Bigl[ \wh{\alpha}^2 + \frac{(a+1)}{3} x^2 \alpha^2 \sinh(\zeta/2)^2 \Bigr] 
\bigl[ v + (m - \theta_+)^2 \bigr] \\ + \frac{\log \bigl(1 + x^{-1} 
\bigr) + \log(\epsilon_2^{-1})}{n}
\end{multline*}
and 
\begin{multline*}
\wh{\alpha} r ( \theta_-) 
> \wh{\alpha} (m - \theta_-) - \frac{1}{2} 
\Bigl[ \wh{\alpha}^2 + \frac{(a+1)}{3}x^2 \alpha^2 \sinh(\zeta/2)^2 \Bigr]  
\bigl[ v + (m - \theta_-)^2 \bigr] \\ - \frac{\log \bigl(1 + x^{-1} 
\bigr) + \log(\epsilon_2^{-1})}{n}.
\end{multline*}
Let us put $\theta_+ - m = m - \theta_- = \sqrt{\gamma v}$ and let us 
look for some value of $\gamma$ ensuring that $r(\theta_+) < 0 < r(\theta_-)$, 
implying that $\theta_- \leq \wh{\theta} \leq \theta_+$. 
Let us choose
\begin{align}
\nonumber
\alpha & = \sqrt{ \frac{ 2 \bigl[ \log(1 + x^{-1}) + \log(\epsilon_2^{-1}) \bigr]}
{ n \bigl[ 1 - \frac{(a+1)}{3} x^2 \sinh(\zeta/2)^2 \bigr] 
(1 + \gamma) v}}, \\
\label{eq4.15}
\wh{\alpha} & = 
\sqrt{ \frac{ 2 \bigl[ \log(1 + x^{-1}) + \log(\epsilon_2^{-1}) \bigr]}
{ n \bigl[ 1 - \frac{(a+1)}{3} x^2 \sinh(\zeta/2)^2 \bigr] 
(1 + \gamma) \wh{v}}} = \alpha \sqrt{\frac{v}{\wh{v}}},
\end{align}
assuming that $x$ will be chosen later on such that 
$$
\frac{(a+1)}{3} x^2 \sinh(\zeta/2)^2 < 1.
$$
Since 
$$
\frac{\log(1+x^{-1}) + \log(\epsilon_2^{-1})}{n} 
= \frac{\alpha^2}{2} \biggl( 1 - \frac{(a+1)}{3}x^2\sinh(\zeta/2)^2 \biggr) 
(1 + \gamma) v,
$$
we obtain with probability at least $1 - 2 \epsilon_1 - 2 \epsilon_2$ 
\begin{align*}
r(\theta_+) & < - \sqrt{\gamma v} + \frac{\alpha v (1 + \gamma)}{2} 
\biggl( \frac{\wh{\alpha}}{\alpha} + \frac{\alpha}{\wh{\alpha}} 
\biggr) \leq - \sqrt{\gamma v} + \alpha v (1 + \gamma) \cosh(\zeta/2), \\ 
\text{and } r(\theta-) & > \sqrt{\gamma v} - \frac{\alpha v (1 + \gamma)}{2} \biggl( 
\frac{\wh{\alpha}}{\alpha} + \frac{\alpha}{\wh{\alpha}} \biggr) 
\geq \sqrt{\gamma v} - \alpha v (1 + \gamma) \cosh(\zeta/2). 
\end{align*}
Therefore, if we choose $\gamma$ such that $\sqrt{\gamma v} = \alpha 
v(1 + \gamma) \cosh(\zeta/2)$, we obtain with probability at least $1 - 
2 \epsilon_1 - 2 \epsilon_2$ that $r(\theta_+) < 0 < r(\theta_-)$, 
and therefore that $\theta_- < \wh{\theta} < \theta_+$. 
The corresponding value of $\gamma$ is 
$\gamma = \eta / (1 - \eta)$, where 
$$
\eta = \frac{2 \cosh(\zeta/2)^2 \bigl[ \log(1 + x^{-1}) + \log(\epsilon_2^{-1}) \bigr]}{
n \bigl[ 1 - \frac{(a+1)}{3} x^2 \sinh(\zeta/2)^2 \bigr]}.
$$
\begin{prop}
\label{prop3.3}
With probability at least $1 - 2 \epsilon_1 - 2 \epsilon_2$, 
the estimator $\wh{\theta}_{\wh{\alpha}}$ defined by equation 
\myeq{eq4.11.2}, where $\wh{\alpha}$ is set as in 
\myeq{eq4.15}, satisfies
$$
\lvert \wh{\theta}_{\wh{\alpha}} - m \rvert \leq \sqrt{\frac{\eta v}{1 - \eta}} 
\leq \sqrt{\frac{\eta \wh{v}}{1-\eta}}\exp(\zeta/2) 
\leq \sqrt{\frac{\eta v}{1 - \eta}} \exp( \zeta).
$$
\end{prop}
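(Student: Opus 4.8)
The plan is to assemble the estimate from the work already carried out in the displays preceding the statement. The heart of the matter --- the two one-sided bounds on $\wh{\alpha}\, r(\theta_\pm)$ obtained from the PAC-Bayes inequality applied to \eqref{eq3.5} and \eqref{eq3.6}, together with the specific choices of $\alpha$ and $\wh{\alpha}$ in \eqref{eq4.15} and of $\gamma = \eta/(1-\eta)$ --- has been done. On the event of probability at least $1 - 2\epsilon_1 - 2\epsilon_2$ on which those bounds hold, one has $r(\theta_+) < 0 < r(\theta_-)$. Since $\theta \mapsto r(\theta)$ is non-increasing and $r(\wh{\theta}_{\wh{\alpha}}) = 0$ by the defining equation \eqref{eq4.11.2}, this forces $\theta_- < \wh{\theta}_{\wh{\alpha}} < \theta_+$.

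First I would read off the leftmost bound. Recalling that $\theta_+ - m = m - \theta_- = \sqrt{\gamma v}$ with $\gamma = \eta/(1-\eta)$, the sandwiching $\theta_- < \wh{\theta}_{\wh{\alpha}} < \theta_+$ gives immediately
$$
\lvert \wh{\theta}_{\wh{\alpha}} - m \rvert < \sqrt{\gamma v} = \sqrt{\frac{\eta v}{1 - \eta}},
$$
which is exactly the first inequality in the statement.

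The remaining two inequalities only trade $v$ for $\wh{v}$ and back, using the variance control. I would note that the $1 - 2\epsilon_1$ event carrying $\lvert \log(\alpha) - \log(\wh{\alpha}) \rvert \leq \zeta/2$ in \eqref{eq4.11} --- equivalently $\lvert \log(v) - \log(\wh{v}) \rvert \leq \zeta$, since $\alpha = \sqrt{c/v}$ and $\wh{\alpha} = \sqrt{c/\wh{v}}$ --- is precisely the event already used to identify $\C{K}(\rho, \pi)$ and to control the moments, so it holds on the same $1 - 2\epsilon_1 - 2\epsilon_2$ event. From $\log(v) - \log(\wh{v}) \leq \zeta$ I get $\sqrt{v} \leq \sqrt{\wh{v}}\exp(\zeta/2)$, which turns the first bound into $\sqrt{\eta\wh{v}/(1-\eta)}\exp(\zeta/2)$ (the middle bound); from $\log(\wh{v}) - \log(v) \leq \zeta$ I get $\sqrt{\wh{v}} \leq \sqrt{v}\exp(\zeta/2)$, which turns the middle bound into $\sqrt{\eta v/(1-\eta)}\exp(\zeta)$ (the rightmost bound).

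The only point requiring care --- and the closest thing to an \emph{obstacle} in an otherwise mechanical argument --- is the probability accounting: one must check that the log-variance bound and the two Chebyshev events underlying the PAC-Bayes step all sit inside a single event of probability at least $1 - 2\epsilon_1 - 2\epsilon_2$, rather than being recombined by a further union bound. This is automatic here because the $2\epsilon_1$ was already charged when \eqref{eq4.11} was invoked to obtain $\C{K}(\rho, \pi) = \log(1 + x^{-1})$, so no additional probability is lost in passing between $v$ and $\wh{v}$.
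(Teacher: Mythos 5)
Your proposal is correct and takes essentially the same route as the paper, whose ``proof'' of this proposition is precisely the inline derivation preceding it: the PAC-Bayes bounds with the choices \eqref{eq4.15} and $\gamma = \eta/(1-\eta)$ force $r(\theta_+) < 0 < r(\theta_-)$, monotonicity of $r$ then sandwiches $\wh{\theta}_{\wh{\alpha}}$ between $\theta_\pm = m \pm \sqrt{\gamma v}$, and the two remaining inequalities follow from $\lvert \log(v) - \log(\wh{v}) \rvert \leq \zeta$ exactly as you describe. Your observation that no extra union bound is needed --- because the $2\epsilon_1$ is already charged for the variance event underlying \eqref{eq4.11} and the entropy computation --- is the right probability accounting and matches the paper.
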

The optimal value of $x$ is the one minimizing 
$$
\frac{\log(1 + x^{-1}) + \log(\epsilon_2^{-1}) }{\ds 1 - \frac{(a+1)}{3} x^2 \sinh(\zeta/2)^2}.
$$
Assuming $\zeta$ to be small, the optimal $x$ will be large, 
so that $\log(1 + x^{-1}) \simeq x^{-1}$, and we can choose the 
approximately optimal value
$$
x = \biggl[ \frac{2(a+1)}{3} \log(\epsilon_2^{-1})  
\biggr]^{-1/3} \sinh(\zeta/2)^{-2/3} .
$$
Let us discuss now the question of balancing $\epsilon_1$ 
and $\epsilon_2$. Let us put $\epsilon = \epsilon_1 + \epsilon_2$ 
and let $y = \epsilon_1/\epsilon$. Optimizing $y$ for a fixed 
value of $\epsilon$ could be done numerically, 
although it seems difficult to obtain a closed formula. 
However, the entropy term in $\eta$ can be written as
$\log(1 + x^{-1}) + \log(\epsilon^{-1}) - \log(1 - y)$. Since
$\zeta$ decreases, and therefore the almost optimal $x$ above 
increases,  when $y$ increases, we will get
an optimal order of magnitude (up to some constant less than $2$) for the bound if we balance $- \log(1 - y)$ 
and $\log(1 + x^{-1})$, resulting in the choice $y  = (1+x)^{-1}$, 
where $x$ is approximately optimized as stated above
(this choice of $x$ depends on $y$, so we end up 
with an equation for $x$, which can 
be solved using an iterative approach). 
This results, with probability at least $1 - 2 \epsilon$, 
in an upper bound for $\lvert \wh{\theta} - m \rvert$ 
 equivalent for large values of the sample size $n$ to
$$
\sqrt{ \frac{2 \log(\epsilon^{-1}) v}{n}}.
$$
Thus we recover, as desired, the same asymptotics as when 
the variance is known. 

Let us illustrate what we get when $n = 500$ or $n = 1000$ and it is known 
that $\kappa \leq 3$.

On these figures, we have plotted upper and lower bounds 
for the deviations of the empirical mean when the sample distribution\footnote{
$\C{B}_{1,\kappa}$ is defined by \myeq{eq1.2.0}} 
is the least favourable one in $\C{B}_{1,\kappa}$.
(These bounds will be proved later on. The upper bound is computed by taking the minimum of three bounds, 
explaining the discontinuities of its derivative). What we see 
on the $n = 500$ example is that our bound remains of the same order 
as the Gaussian bound up to confidence levels of order $1 - 10^{-8}$, 
whereas this is not the case with the empirical mean.  

In the case when $n = 1000$, we see that our estimator possibly improves on 
the empirical mean in the 
range of confidence levels
going from $1 - 10^{-2}$ to $1 - 10^{-6}$ and is a proved winner in the 
range going from $1 - 10^{-6}$ to  $1 - 10^{-14}$.

\noindent \mbox{} \hfill \raisebox{-1.75cm}[8cm][0cm]{\includegraphics{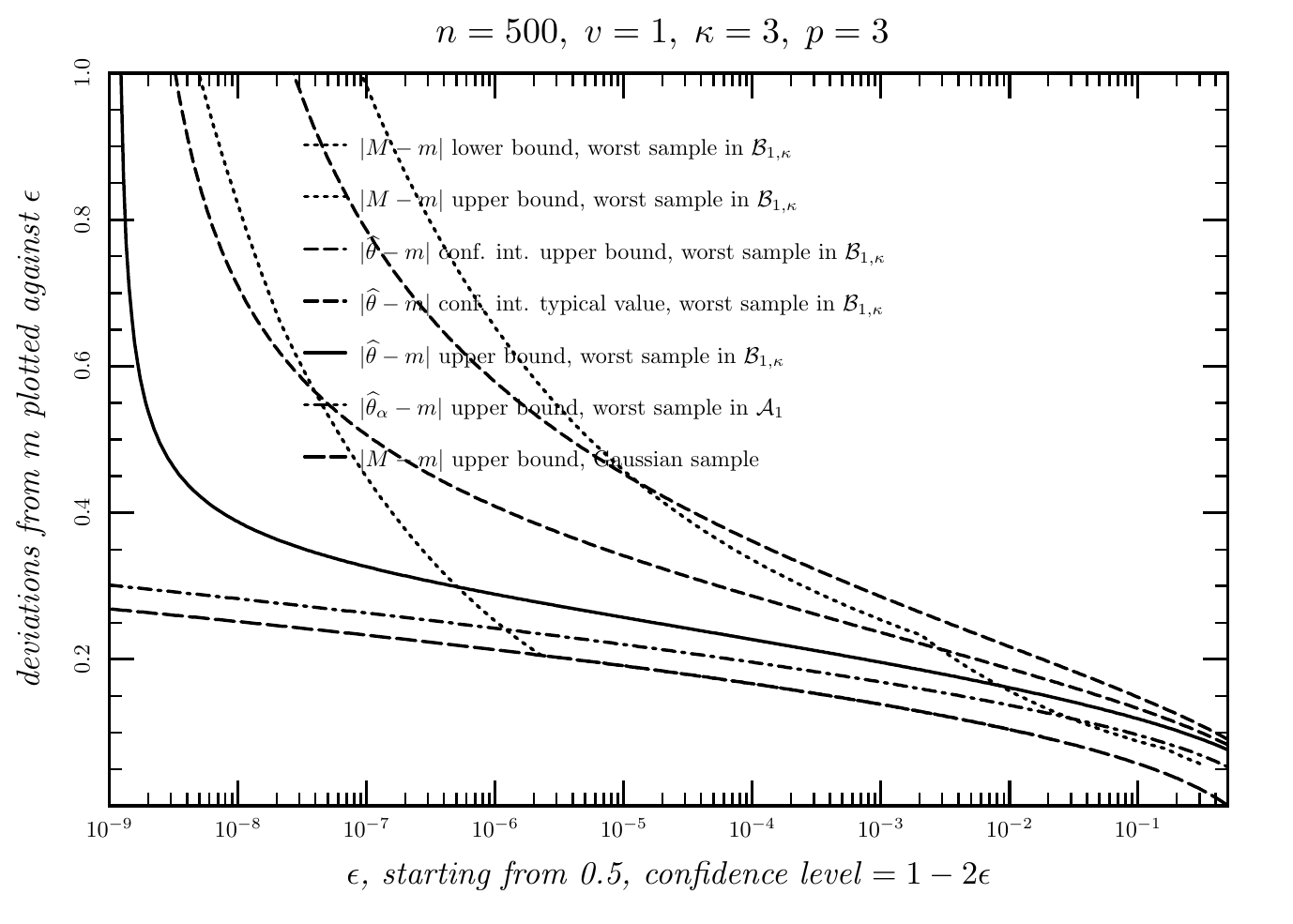}}
\hfill \mbox{}

\noindent \mbox{} \hfill \raisebox{-0.5cm}[9.5cm][0cm]{\includegraphics{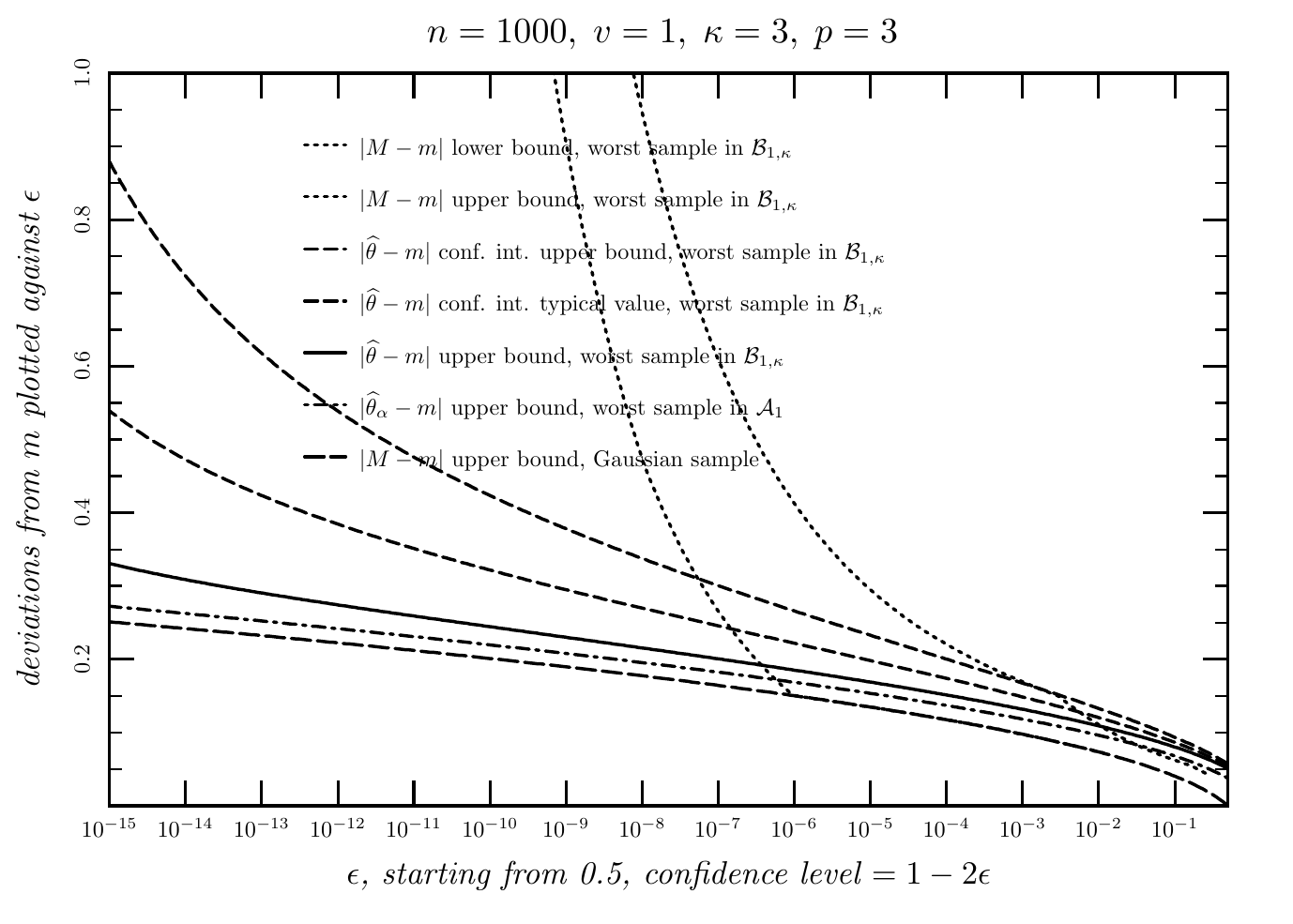}}   
\hfill \mbox{}

Let us see now the influence of $\kappa$ and plot the curves corresponding
to increasing values of $n$ and $\kappa$. 

\noindent \mbox{} \hfill \raisebox{0cm}[10cm][0cm]{\includegraphics{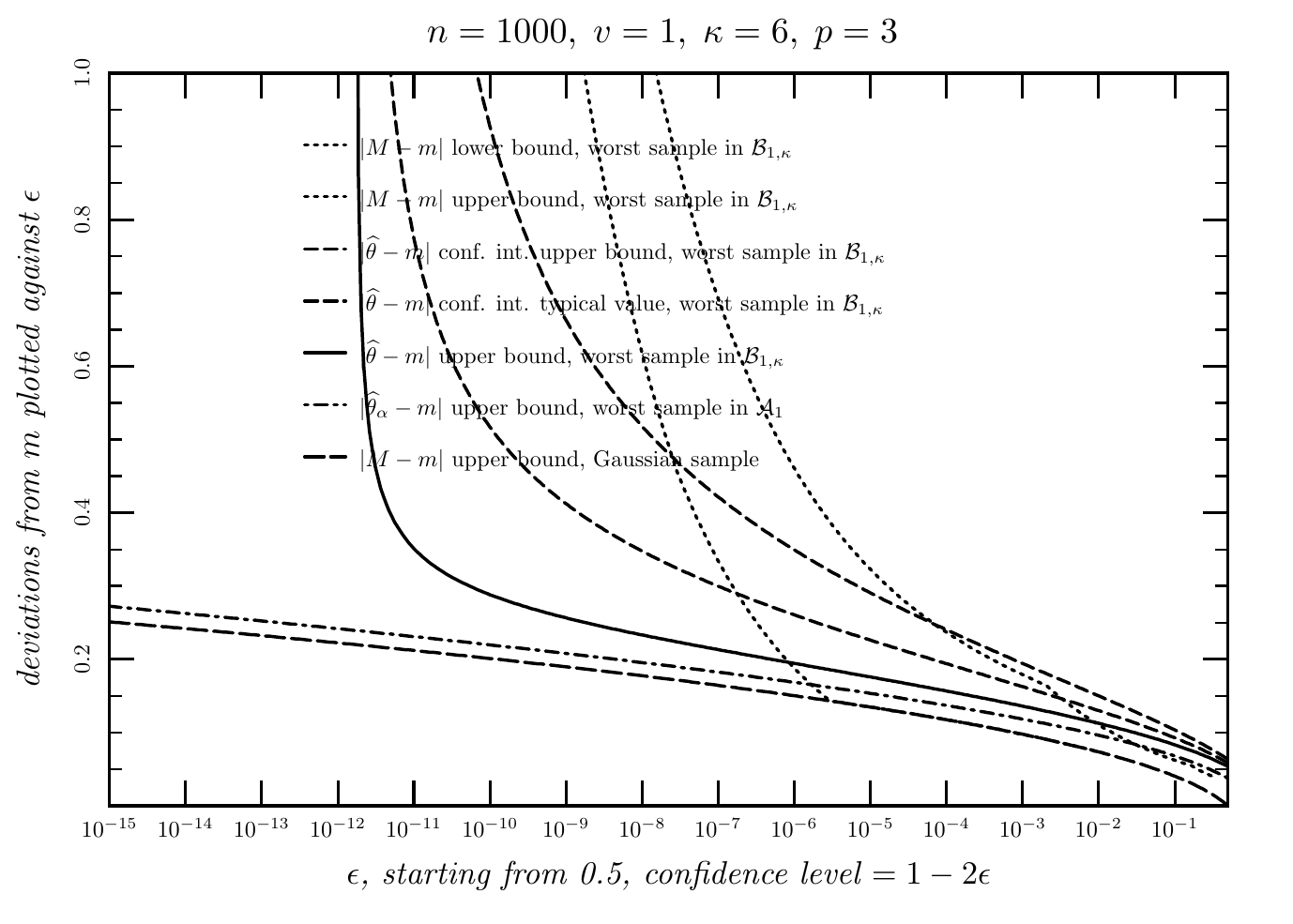}}   
\hfill \mbox{}

\noindent \mbox{} \hfill \raisebox{-0cm}[10cm][0cm]{\includegraphics{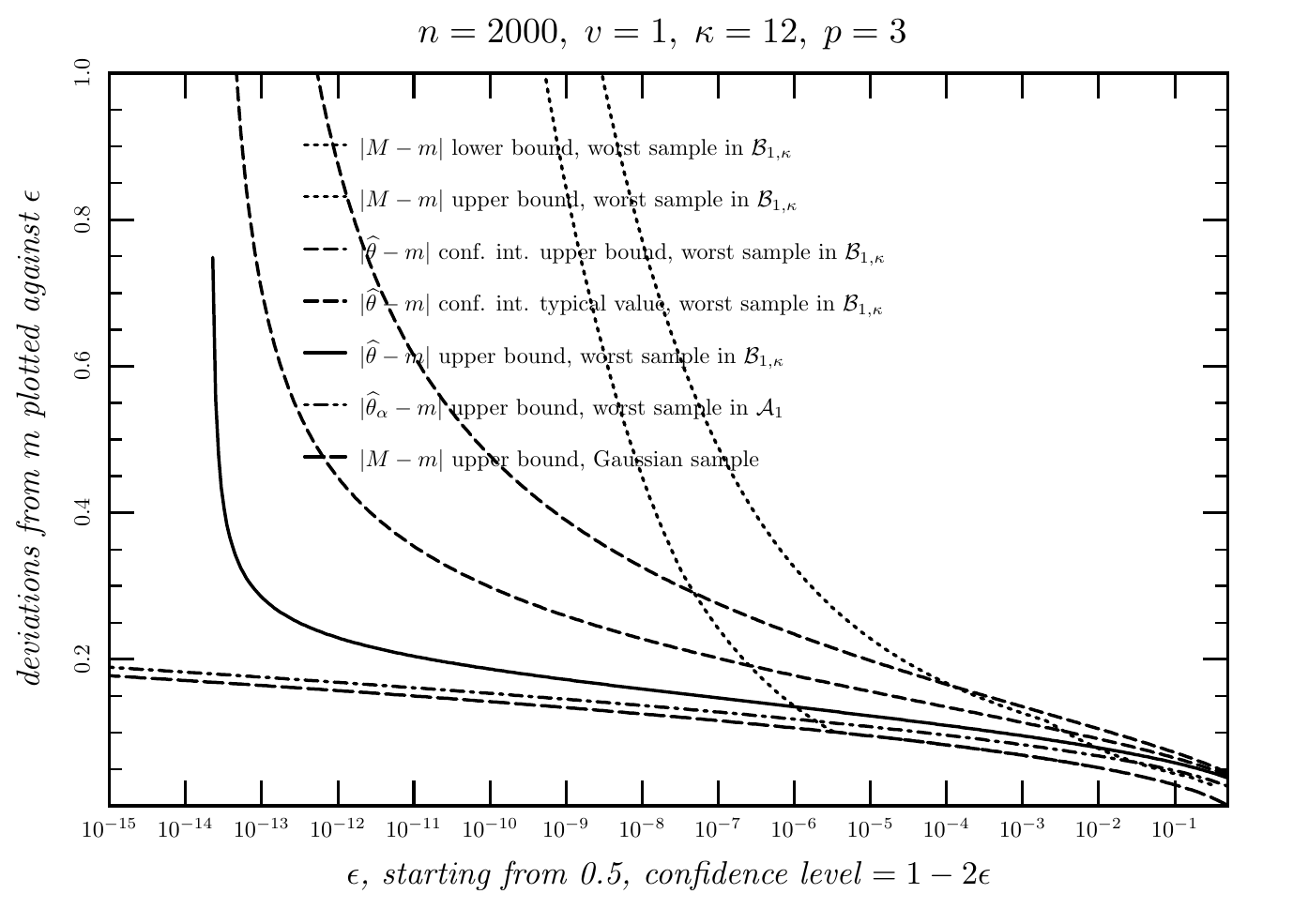}}   
\hfill \mbox{}\\
\mbox{} \hfill \raisebox{0cm}[10cm][0cm]{\includegraphics{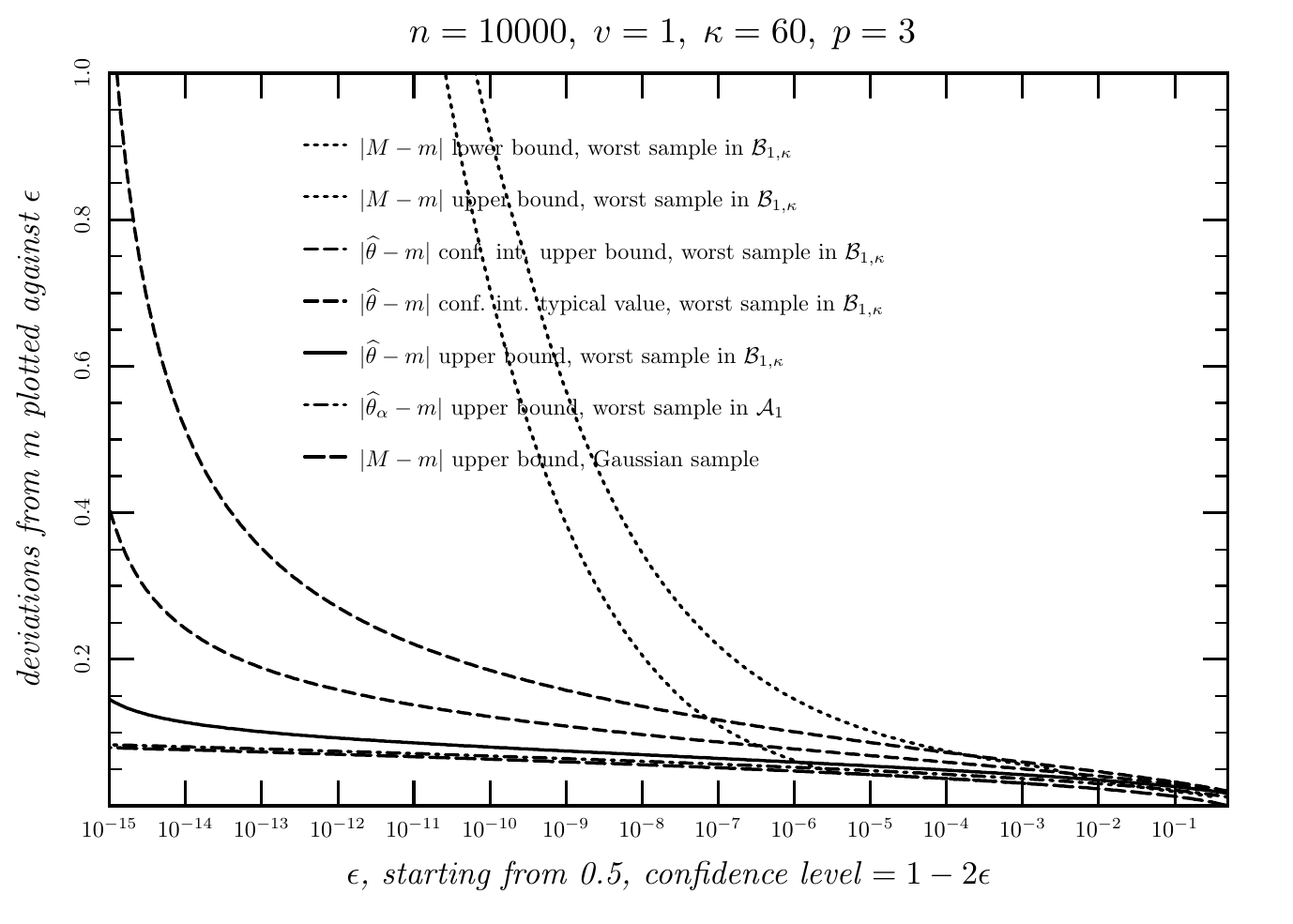}}   
\hfill \mbox{}\\
When we double the kurtosis letting $n = 1000$, we follow the Gaussian curve up to 
confidence levels of $1 - 10^{-10}$ instead of $1 - 10^{-14}$.
This is somehow the maximum kurtosis for which the bounds are satisfactory 
for this sample size. Looking at Proposition \thmref{prop4.2}, 
we see that the bound in first approximation depends on the ratio 
$\chi/n = \kappa/n$ (when $p = 3$), suggesting that to obtain similar performances, 
we have to take $n$ proportional to $\kappa$, which gives 
a minimum sample size of $n = 1000 \kappa/6$, if we want 
to follow the Gaussian curve up to confidence levels of order at least 
$1 - 10^{-10}$. 

These curves suggest another approach to choose the kurtosis parameter
$\kappa$. It is to use the largest value of the kurtosis with a low 
impact on the bound of Proposition \thmref{prop3.3}, 
given the sample size.  This leads, when in doubt about the true  
kurtosis value, for sample sizes $n \geq 1000$, 
to set, according to the previous 
rule of thumb, the kurtosis in the definition of the
estimator to the value $\ds \kappa_{\max} = 6 n/1000$. Doing so,
we get almost the same deviations as if the sample distribution 
were Gaussian, at levels of confidence up
to $1 - 10^{-10}$, for the largest range of (possibly non-Gaussian) sample
distributions. 

\section{Upper bounds for the deviations 
of the empirical mean}

In the previous sections, we compared new mean estimators with 
the empirical mean. We will devote the end of this paper
to prove the bounds on the empirical mean used in these 
comparisons.

This section deals with upper bounds, whereas the next one 
will study corresponding lower bounds.

Let us start with the case when 
the sample distribution may be any probability measure with a finite variance.
It is natural in this situation to bound the deviations 
of the empirical mean 
$$
M = \frac{1}{n} \sum_{i=1}^n Y_i
$$  
applying Chebyshev's inequality to its second moment,
to conclude that
\begin{equation}
\label{eq7.1}
\B{P} \biggl( \lvert M - m \rvert \geq \sqrt{\frac{v}{2 \epsilon n}} 
\biggr) \leq 2 \epsilon.
\end{equation}

This is in general close to optimal, as will be shown later
when we will compute corresponding lower bounds. 

When the sample distribution has a finite kurtosis, it is 
possible to take this into account to refine the bound. 
The analysis becomes less straightforward, and will be
carried out in this section.
The following bound uses a truncation 
argument, allowing to study separately the behaviour of small 
and large values. It is to our knowledge a new result. 
We will show later in this paper that its leading term is essentially 
tight --- up to a factor $\ds \biggl(\frac{\kappa}{\kappa-1}\biggr)^{1/4}$,
--- 
when the proper asymptotic is considered. 

\begin{prop}
\label{prop6.1}
For any probability distribution whose kurtosis is not greater than
$\kappa$, the empirical mean $M$ is such that with probability at 
least $1 - 2 \epsilon$, 
\begin{multline*}
\frac{\lvert M - m \rvert}{\sqrt{v}} \leq \inf_{\lambda \in (0,1)} \sqrt{ \frac{2 \log(\lambda^{-1} \epsilon^{-1})}{n}} 
+ \frac{\sqrt{\kappa} \log(\lambda^{-1} \epsilon^{-1})}{3n} 
\\ + \left( \frac{\kappa}{2(1-\lambda) n^3 \epsilon} 
\right)^{1/4} \biggl[ 1 + \frac{3(n-1) \kappa 
\log(\lambda^{-1} \epsilon^{-1})^2}{
4^3(1 + \sqrt{2})^4 n^2} \biggr]^{1/4} 
\underset{\substack{n \epsilon \rightarrow 0 \\ \epsilon^{1/n} \rightarrow 1 }}{\simeq} 
\left( \frac{\kappa}{2 n^3 \epsilon} \right)^{1/4}.
\end{multline*}
Instead of minimizing the bound in $\lambda$, 
one can also take for simplicity 
$$
\lambda = \min \biggl\{ \frac{1}{2}, 
2^{7/4} \biggl( \frac{n \epsilon}{\kappa} \biggr)^{1/4} \sqrt{\log \biggl( \frac{\kappa}{2n\epsilon^5} 
\biggr)} 
\biggr\}.
$$
\end{prop}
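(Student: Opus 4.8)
The plan is to use a single-level truncation of the centered observations and to split the confidence budget $2\epsilon$ between a ``bulk'' event and a ``tail'' event through the parameter $\lambda$. Write $X_i = Y_i - m$, fix a threshold $T>0$, and decompose $X_i = B_i + R_i$ with $B_i = X_i \mathbf 1(\lvert X_i\rvert\le T)$ and $R_i = X_i\mathbf 1(\lvert X_i\rvert>T)$. Since $\B{E}(X_i)=0$, the two truncation biases cancel, $\B{E}(B_i) = -\B{E}(R_i)$, so that
$$ M - m = \frac1n\sum_{i=1}^n\bigl(B_i - \B{E} B_i\bigr) + \frac1n\sum_{i=1}^n\bigl(R_i - \B{E} R_i\bigr). $$
I would allocate probability $\lambda\epsilon$ to each side of the first (bounded) sum and probability $2(1-\lambda)\epsilon$ to the second (tail) sum, so that a union bound leaves failure probability at most $2\lambda\epsilon + 2(1-\lambda)\epsilon = 2\epsilon$; this is what produces $\log(\lambda^{-1}\epsilon^{-1})$ in the first two terms and the prefactor $\bigl(2(1-\lambda)n^3\epsilon\bigr)^{-1/4}$ in the third.

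For the bounded sum I would apply Bernstein's inequality to the i.i.d.\ variables $B_i - \B{E} B_i$, whose variance is at most $v$ (since $\B{E} B_i^2\le\B{E} X_i^2 = v$) and whose almost sure range is controlled by $T$ (the bias $\B{E} B_i$ being negligible, of order $\kappa v^2/T^3$). The variance part gives $\sqrt{2v\log(\lambda^{-1}\epsilon^{-1})/n}$ and the range part a contribution proportional to $T\log(\lambda^{-1}\epsilon^{-1})/n$; with the truncation scale taken of order $\sqrt{\kappa v}$ this matches, after division by $\sqrt v$, the announced term $\sqrt{\kappa}\,\log(\lambda^{-1}\epsilon^{-1})/(3n)$.

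For the tail sum I would avoid exponential moments and instead use the fourth moment together with Markov's inequality, which controls both sides at once. By independence and centering,
$$ \B{E}\Bigl[\Bigl(\sum_{i=1}^n (R_i - \B{E} R_i)\Bigr)^4\Bigr] = n\,\B{E}\bigl[(R-\B{E} R)^4\bigr] + 3n(n-1)\bigl(\B{E}\bigl[(R-\B{E} R)^2\bigr]\bigr)^2, $$
and the kurtosis hypothesis supplies $\B{E} R^4\le\B{E} X^4\le\kappa v^2$ together with $\B{E} R^2\le\B{E} X^4/T^2\le\kappa v^2/T^2$. Dividing by $n^4$, taking the fourth root and applying Markov at level $2(1-\lambda)\epsilon$ produces a tail contribution of the form $\bigl(\kappa/(2(1-\lambda)n^3\epsilon)\bigr)^{1/4}$ times a correction factor coming from the cross term $3n(n-1)(\B{E} R^2)^2$, which is the bracket $[1+\cdots]^{1/4}$.

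The delicate point is the choice of the truncation level $T$: it must be large enough for the fourth-moment cross term (hence the correction factor) to stay close to $1$, yet small enough to keep the Bernstein range term of the right order, and these two demands pull in opposite directions. Pinning $T$ down and reading off the exact numerical constant $4^3(1+\sqrt2)^4$ in the correction factor is where the real work lies; I expect this constant to emerge from a balancing condition of the same type as the requirement $(1+2y)^2\le 2$ used in the proof of Proposition~\ref{prop4.2}, which is precisely what produced the factor $(1+\sqrt2)$ there. Once $T$ is fixed, summing the three bounds and keeping the freedom in $\lambda$ gives the stated infimum; the simplified explicit choice of $\lambda$ is then obtained by making the first and third terms comparable, and the claimed asymptotics follow by letting $n\epsilon\to0$ and $\epsilon^{1/n}\to1$, in which regime the tail term $\bigl(\kappa/(2n^3\epsilon)\bigr)^{1/4}$ dominates.
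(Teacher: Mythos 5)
Your architecture (a bounded part handled by a Bernstein-type bound, a tail part handled by a fourth moment plus Markov, and the budget split $\lambda\epsilon$ versus $2(1-\lambda)\epsilon$) parallels the paper's, and your fourth-moment computation for the tail is the right one; but the hard truncation $X = B + R$ with $R = X\,\B{1}(\lvert X\rvert > T)$ cannot deliver the stated bound, and the tension you flag as ``the delicate point'' is an obstruction, not a tuning problem. To reproduce the middle term $\sqrt{\kappa v}\,\log(\lambda^{-1}\epsilon^{-1})/(3n)$ you need $T \leq \sqrt{\kappa v}$ (already slightly spoiled by the bias, since the one-sided range of $B - \B{E}(B)$ is $T + \B{E}\lvert R \rvert$, not $T$). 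But then your only control on the cross term is $\Var(R) \leq \B{E}(R^2) \leq \kappa v^2/T^2$, and this is attained: for the distribution $\B{P}(X = \pm\sqrt{\kappa v}) = 1/(2\kappa)$, $\B{P}(X=0) = 1 - 1/\kappa$, which has variance $v$ and kurtosis exactly $\kappa$, any $T < \sqrt{\kappa v}$ gives $R = X$ and $\Var(R) = v$. Then $\B{E}\bigl[\bigl(\sum_i (R_i - \B{E} R_i)\bigr)^4\bigr] \geq \bigl(n \Var(R)\bigr)^2 = n^2 v^2$, so Markov at level $2(1-\lambda)\epsilon$ cannot give a tail term better than roughly $\sqrt{v}\,\bigl(3/(2(1-\lambda)n^2\epsilon)\bigr)^{1/4}$, which is worse than the claimed $\bigl(\kappa/(2(1-\lambda)n^3\epsilon)\bigr)^{1/4}\sqrt{v}$ by a factor of order $(n/\kappa)^{1/4}$; both the stated inequality and the asymptotics $\bigl(\kappa/(2n^3\epsilon)\bigr)^{1/4}$ are lost as soon as $n \gg \kappa$. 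Conversely, matching the bracket $1 + 3(n-1)\kappa\log^2/\bigl(4^3(1+\sqrt{2})^4 n^2\bigr)$ forces $\Var(R) \leq \kappa v \log/\bigl(8(1+\sqrt{2})^2\sqrt{n}\bigr)$, hence $T \gtrsim \sqrt{v}\, n^{1/4}/\sqrt{\log}$, and the Bernstein range term then has order $\sqrt{v \log}\, n^{-3/4}$, exceeding $\sqrt{\kappa v}\log/n$. The two demands on $T$ are compatible only when $\kappa \log(\lambda^{-1}\epsilon^{-1}) \gtrsim \sqrt{n}$, far outside the regime of interest.

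What the paper does instead is to dispense with any threshold: it splits the identity smoothly, $x = \psi(x) + g(x)$, with $\psi$ the wide influence function \eqref{eq1.2w} and $g = \mathrm{id} - \psi$, writing $M - m = \frac{1}{\alpha n}\sum_i \psi[\alpha(Y_i-m)] + \frac{1}{\alpha n}\sum_i g[\alpha(Y_i-m)]$. The $\psi$-sum has exponential moments by \eqref{eqFund} (Proposition \ref{prop1.2}), so with $\alpha = \sqrt{2\log(\epsilon_1^{-1})/(nv)}$ it yields the Gaussian term with the exact constant $\sqrt{2}$ and no range term at all; the remainder satisfies $\lvert g(x)\rvert \leq \min\bigl\{\lvert x \rvert,\, x^2/(4(1+\sqrt{2})),\, \lvert x\rvert^3/6\bigr\}$, so its first moment is cubically small, $\B{E}\lvert G\rvert \leq \alpha^3\sqrt{\kappa v^3}/6$ (this, not a Bernstein range term, is the origin of the constant $1/3$), and, crucially, its second moment is quadratically damped, $\B{E}(G^2) \leq \kappa\alpha^4 v^2/\bigl(16(1+\sqrt{2})^2\bigr) = O(\kappa\log^2/n^2)$, which is exactly what makes the cross term in the fourth-moment bound the small bracket of the statement. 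In particular the constant $4^3(1+\sqrt{2})^4$ comes from the pointwise bound $g'(x) \leq x/(2(1+\sqrt{2}))$, not from a balancing condition of the type $(1+2y)^2 \leq 2$ as you conjectured. The moral is that the remainder must be small on the bulk as well as through the tail: $g(\alpha X)$ collects a cubically damped contribution from every observation, so its moments scale with $\alpha^3 \B{E}\lvert X\rvert^3$ and $\alpha^4\B{E}(X^4)$, whereas the indicator remainder $X\,\B{1}(\lvert X\rvert > T)$ is all-or-nothing and its second moment is limited only by the tail mass $\kappa v^2/T^2$. Your proof becomes the paper's, with the claimed constants, once the indicator truncation is replaced by this smooth split and Bernstein by the $\psi$-Chernoff bound.
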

We see that there are two regimes in the behaviour of the deviations of $M$. 
A Gaussian regime for levels of confidence less than 
$1 - 1/n$ and long tail regime for higher confidence levels, 
depending on the value of the kurtosis $\kappa$.

In addition to this, let us also put forward the fact that, even in the simple case when 
the mean $m$ is known, estimating the 
variance under
a kurtosis hypothesis at high confidence levels 
cannot be done using  
the empirical estimator  
$$
M_2 = \frac{1}{n} \sum_{i=1}^n (Y_i - m)^2.
$$
Indeed, assuming without loss of generality 
that $m = 0$ and computing the quadratic mean 
$$
\B{E} \Bigl\{ \bigl[ M_2 - \B{E}(Y^2) \bigr]^2 \Bigr\} 
= \frac{\B{E}(Y^4) - \B{E}(Y^2)^2}{n} = \frac{(\kappa-1)}{n} \B{E}(Y^2)^2,
$$
we can only conclude, using Chebyshev's inequality,  
that with probability at least $1 - 2 \epsilon$
$$
\B{E}(Y^2) \leq \frac{M_2}{\ds 1 - \sqrt{\frac{\kappa-1}{2n\epsilon}}},
$$
a bound which blows up at level of confidence 
$\ds \epsilon = \frac{\kappa-1}{2n}$, and which we do not suspect to 
be substantially improvable in the worst case. 
In contrast to this, Propositions \ref{prop4.1} 
and \thmref{prop4.2}  
provide variance estimators with high confidence levels. 

\section{Lower bounds}

\subsection{Lower bound for Gaussian sample distributions}

This lower bound is well known. We recall it here for the sake
of completeness. 

The empirical mean has optimal deviations when the sample distribution 
is Gaussian in the following precise sense.

\begin{prop}
\label{prop2.1.2}
For any estimator of the mean $\wh{\theta} : \B{R}^n \rightarrow \B{R}$, 
any variance value $v > 0$, 
and any deviation level $\eta > 0$, 
there is some Gaussian measure $\C{N}(m,v)$ (with variance $v$ and mean $m$)
such that the i.i.d. sample of length $n$ drawn from this distribution 
is such that 
$$
\B{P} \bigl( \wh{\theta} \geq m + \eta \bigr) \geq \B{P} 
\bigl( M \geq m + \eta \bigr) 
\quad \text{ or } \quad 
\B{P} \bigl( \wh{\theta} \leq m - \eta \bigr) \geq 
\B{P} \bigl( M \leq m - \eta \bigr),
$$
where $\ds M = \frac{1}{n} \sum_{i=1}^n Y_i$ is the empirical mean.
\end{prop}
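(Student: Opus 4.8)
The plan is to use a two-point (Le Cam style) reduction combined with the Neyman--Pearson lemma. First I would record that for a Gaussian sample of variance $v$ the empirical mean satisfies $M \sim \C{N}(m, v/n)$, so that by symmetry of the centred Gaussian the tail probability
$$
p \eqdef \B{P}\bigl( M \geq m + \eta \bigr) = \B{P}\bigl( M \leq m - \eta \bigr)
$$
does not depend on $m$. The goal is then to produce a value of $m$ for which the arbitrary estimator $\wh{\theta}$ cannot beat this $p$ on at least one side. I would pick two candidate means, $m_1$ and $m_2 = m_1 + 2\eta$, with common midpoint $m_0 = m_1 + \eta = m_2 - \eta$, and introduce the event $A = \{ \wh{\theta} \geq m_0 \} \subseteq \B{R}^n$, writing $\B{P}_{m_1}$ and $\B{P}_{m_2}$ for the two sample distributions. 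The point is that $\B{P}_{m_1}(A) = \B{P}_{m_1}(\wh{\theta} \geq m_1 + \eta)$ is the right-hand overshoot probability under $m_1$, while $\B{P}_{m_2}(A^c) = \B{P}_{m_2}(\wh{\theta} < m_0) \leq \B{P}_{m_2}(\wh{\theta} \leq m_2 - \eta)$ lower-bounds the left-hand undershoot probability under $m_2$.

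The heart of the argument is that the empirical mean is a sufficient statistic: the likelihood ratio between the two product measures,
$$
L(y) = \frac{d\B{P}_{m_2}}{d\B{P}_{m_1}}(y) = \exp\Bigl( \tfrac{2\eta}{v}\bigl( {\textstyle\sum_{i}} y_i - n m_0 \bigr) \Bigr),
$$
is a strictly increasing function of $M$, and $L \geq 1$ exactly when $M \geq m_0$. Thus the empirical-mean test $B = \{ M \geq m_0 \}$ coincides with the likelihood-ratio test $\{ L \geq 1 \}$, and its two error probabilities are $\B{P}_{m_1}(B) = p$ and $\B{P}_{m_2}(B^c) = p$. The elementary Neyman--Pearson inequality, namely that $(L-1)$ restricted to $A$ is pointwise dominated by its positive part $(L-1)$ restricted to $\{ L \geq 1 \}$, integrated against $\B{P}_{m_1}$, then gives
$$
\B{P}_{m_2}(A) - \B{P}_{m_1}(A) \leq \B{P}_{m_2}(B) - \B{P}_{m_1}(B) = (1-p) - p = 1 - 2p,
$$
valid for the completely arbitrary event $A$ induced by $\wh{\theta}$.

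To finish I would argue by contradiction: if simultaneously $\B{P}_{m_1}(A) < p$ and $\B{P}_{m_2}(A^c) < p$, then $\B{P}_{m_2}(A) > 1 - p$, whence $\B{P}_{m_2}(A) - \B{P}_{m_1}(A) > (1-p) - p = 1-2p$, contradicting the displayed inequality. Hence $\B{P}_{m_1}(A) \geq p$ or $\B{P}_{m_2}(A^c) \geq p$; in the first case one takes $m = m_1$ and the right-hand alternative of the proposition holds, in the second case one takes $m = m_2$ and the left-hand alternative holds, using $\B{P}_{m_2}(\wh{\theta} \leq m_2 - \eta) \geq \B{P}_{m_2}(A^c)$. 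The only delicate point is the treatment of ties on $\{ \wh{\theta} = m_0 \}$, which may carry positive mass because $\wh{\theta}$ is arbitrary; defining $A$ with a non-strict inequality on the overshoot side and exploiting the freedom between strict and non-strict comparisons on the undershoot side absorbs this. I expect this bookkeeping, rather than any analytic difficulty, to be the main thing to get right --- the estimator $\wh{\theta}$ being completely unconstrained is exactly what the Neyman--Pearson comparison is built to handle, since it controls an arbitrary event rather than merely a threshold rule on $M$.
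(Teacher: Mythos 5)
Your proof is correct and takes essentially the same route as the paper: a two-point reduction between Gaussian means $2\eta$ apart, with the empirical-mean threshold test identified, via monotonicity of the likelihood ratio, as the optimal test between the two product measures. The paper packages the optimality step as $\B{P}_1(\wh{\theta} \geq 0) + \B{P}_2(\wh{\theta} \leq 0) \geq \lvert \B{P}_1 \wedge \B{P}_2 \rvert = \B{P}_1(M \geq 0) + \B{P}_2(M \leq 0)$, which is exactly your Neyman--Pearson inequality $\B{P}_{m_2}(A) - \B{P}_{m_1}(A) \leq 1 - 2p$ rearranged, so the two arguments differ only in bookkeeping.
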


This means that any distribution free symmetric confidence interval based on 
the (supposedly known) value of the variance has to include the 
confidence interval for the empirical mean of a Gaussian distribution, 
whose length is exactly known and equal to the properly
scaled quantile of the Gaussian measure. 

Let us state this more precisely. With the 
notations of the previous proposition 
\begin{multline*}
\B{P}(M \geq m + \eta) = \B{P} \bigl( M \leq m - \eta) 
\\ = G\left[ \left( \sqrt{\frac{n}{v}} \eta , + \infty \right( 
\right] = 1 - F \left( \sqrt{\frac{n}{v}} \eta \right),
\end{multline*}
where $G$ is the standard normal measure and $F$ its distribution 
function. 

The upper bounds proved in this paper can be decomposed into 
$$
\B{P}(\wh{\theta} \geq m + \eta) \leq \epsilon \qquad \text{and} 
\qquad \B{P} (\wh{\theta} \leq m - \eta) \leq \epsilon,
$$
although we preferred for simplicity to state them 
in the slightly weaker form $\B{P}( \lvert 
\theta - m \rvert \geq \eta) \leq 2 \epsilon$.

As the Gaussian shift model made of Gaussian sample distributions with 
a given variance and varying means, is included in all the 
models we are considering in this paper, we necessarily should have 
according to the previous proposition
$$
\epsilon \geq 1 - F \left( \sqrt{\frac{n}{v}} \eta \right), 
$$
which can be also written as 
$$
\eta \geq \sqrt{\frac{v}{n}} F^{-1} ( 1 - \epsilon ).
$$

Therefore some visualisation of the quality of our bounds can be
obtained by plotting $\ds \epsilon \mapsto \eta$ against 
$\ds \epsilon \mapsto \sqrt{\frac{v}{n}} F^{-1}(1 - \epsilon)$, 
as we did in the previous sections. 

Let us remark eventually that the assumed symmetry of the confidence 
region is not a real limitation. Indeed, if we can prove 
for any given estimator $\wh{\theta}$ that for 
any Gaussian sample distribution with a given variance $v$, 
\begin{align*}
\B{P} \Bigl[ m \geq \wh{\theta} + \eta_+(\epsilon) 
\Bigr] & \leq \epsilon, \\ 
\text{and } \quad \B{P} \Bigl[ m \leq \wh{\theta} - \eta_-(\epsilon) 
\Bigr] & \leq \epsilon,
\end{align*}
then we may consider for any value of $\epsilon$ the estimator
with symmetric confidence levels defined as 
$$
\wh{\theta}_s = \wh{\theta} + \frac{\eta_+(\epsilon)
- \eta_-(\epsilon)}{2}. 
$$
This symmetric estimator is such that for any Gaussian sample distribution with variance
$v$, 
\begin{align*}
\B{P} \Bigl[ m \geq \wh{\theta}_s + \frac{\eta_-(
\epsilon) + \eta_+(\epsilon) }{2}
\Bigr] & \leq \epsilon, \\ 
\B{P} \Bigl[ m \leq \wh{\theta}_s - 
\frac{\eta_-(\epsilon) +\eta_+(\epsilon)}{2}
\Bigr] & \leq \epsilon.
\end{align*}
Thus, applying the previous proposition, we obtain that
$$
\frac{\eta_+(\epsilon) + \eta_-(\epsilon)}{2} 
\geq \sqrt{\frac{v}{n}} F^{-1}(1 - \epsilon).
$$

\subsection[Deviations of the empirical mean depending on the 
variance]{Lower bound for the deviations of the empirical mean
depending on the variance} 

In the following proposition, we state a lower bound 
for the deviations of the empirical mean when the sample
distribution\footnote{$\C{A}_{v_{\max}}$ is defined by \myeq{eq1.1}} is the least favourable in $\C{A}_{v_{\max}}$ 
(meaning the distribution for which the deviations 
of the empirical mean are the largest). 

\begin{prop}
\label{prop2.2}
For any value of the variance $v$, any deviation level $\eta > 0$,
there is some distribution with variance $v$ and mean $0$ such that
the i.i.d. sample of size $n$ drawn from it satisfies  
$$
\B{P}(M \geq \eta) = \B{P}(M \leq - \eta) \geq 
\frac{v \left( 1 - \frac{v}{\eta^2 n^2} \right)^{n-1}}{2 n \eta^2}.
$$
Thus, as soon as $\epsilon \leq (2e)^{-1}$, 
with probability at least $2 \epsilon$, 
$$
\lvert M - m \rvert \geq  \sqrt{\frac{v}{2 n \epsilon}} 
\left( 1 - \frac{2e \epsilon}{n} \right)^{\frac{n-1}{2}}.
$$
\end{prop}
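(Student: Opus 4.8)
The plan is to exhibit an explicit least-favourable distribution rather than to argue abstractly. The natural candidate is a symmetric three-point law that hides all of its variance in a rare, large spike, so that a single spiked observation shifts the empirical mean by exactly $\eta$. Concretely, for the given $\eta > 0$ I set $p = \dfrac{v}{2 n^2 \eta^2}$ and take
$$\B{P} = p\, \delta_{-n\eta} + (1 - 2p)\, \delta_0 + p\, \delta_{n\eta}.$$
In the nontrivial regime $v \leq n^2 \eta^2$ one has $p \leq 1/2$, so this is a genuine probability measure, and a direct computation gives mean $0$ and variance $2 p (n\eta)^2 = v$, as required. (When $v > n^2\eta^2$ the asserted right-hand side is non-positive and there is nothing to prove.) Note that $2p = v/(n^2\eta^2)$ is precisely the quantity appearing in the statement.

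The first inequality then follows by restricting attention to a single favourable event. Let $E$ be the event that exactly one of the $Y_i$ equals $n\eta$ while all the others equal $0$. On $E$ one has $M = n\eta/n = \eta$, so $E \subseteq \{M \geq \eta\}$, and since the $n$ draws are i.i.d.,
$$\B{P}(E) = n\, p\, (1 - 2p)^{n-1} = \frac{v}{2 n \eta^2}\left(1 - \frac{v}{n^2\eta^2}\right)^{n-1}.$$
This yields $\B{P}(M \geq \eta) \geq \B{P}(E)$, which is exactly the claimed lower bound, and the equality $\B{P}(M \geq \eta) = \B{P}(M \leq -\eta)$ is immediate from the symmetry of $\B{P}$, under which $M$ and $-M$ have the same law.

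For the deviation statement I would specialise $\eta$ to $\eta_* = \sqrt{\dfrac{v}{2 n \epsilon}}\bigl(1 - \tfrac{2 e \epsilon}{n}\bigr)^{(n-1)/2}$ and apply the first part to this $\eta_*$. Since $\{M \geq \eta_*\}$ and $\{M \leq -\eta_*\}$ are disjoint, summing the two symmetric bounds gives $\B{P}(|M - m| \geq \eta_*) \geq \frac{v}{n\eta_*^2}\bigl(1 - \frac{v}{n^2\eta_*^2}\bigr)^{n-1}$ with $m = 0$. Substituting $\eta_*$ one finds $\frac{v}{n\eta_*^2} = 2\epsilon\,(1 - 2e\epsilon/n)^{-(n-1)}$ and $\frac{v}{n^2\eta_*^2} = \frac{2\epsilon}{n}(1 - 2e\epsilon/n)^{-(n-1)}$, so that, writing $A = (1 - 2e\epsilon/n)^{n-1}$ and taking $(n-1)$-st roots, the target inequality $\B{P}(|M-m| \geq \eta_*) \geq 2\epsilon$ collapses to the single scalar condition $A \geq e^{-1}$.

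The estimate $A = \bigl(1 - \tfrac{2 e \epsilon}{n}\bigr)^{n-1} \geq e^{-1}$ is where the hypothesis $\epsilon \leq (2e)^{-1}$ enters, and it is the only genuinely quantitative step. I would prove it from the elementary bound $\log(1 - u) \geq -u/(1-u)$ for $u \in [0,1)$ applied with $u = 2e\epsilon/n$: this gives $(n-1)\log(1 - 2e\epsilon/n) \geq -\frac{(n-1)\,2e\epsilon}{\,n - 2e\epsilon\,}$, whose right-hand side is $\geq -1$ precisely when $2e\epsilon \leq 1$. The same bound also yields $v/(n^2\eta_*^2) \leq 2e\epsilon/n \leq 1/n < 1$, confirming $p < 1/2$ and hence the legitimacy of the construction for this $\eta_*$. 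The only delicate points are the bookkeeping in the substitution and this final elementary inequality; once the spike distribution is guessed, it carries all of the conceptual content.
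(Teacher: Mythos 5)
Your construction is exactly the paper's: the same three-point law $p\,\delta_{-n\eta}+(1-2p)\,\delta_0+p\,\delta_{n\eta}$ with $p=v/(2n^2\eta^2)$, and the same single-spike event yielding $np(1-2p)^{n-1}$ --- indeed your phrasing via the event $E$ is slightly more careful than the paper's, which writes $\B{P}(M=\eta)$ where a lower bound through your $E$ is what the computation actually gives. Your derivation of the second display (substituting $\eta_*$, reducing the target to $\bigl(1-2e\epsilon/n\bigr)^{n-1}\geq e^{-1}$ via $(n-1)$-st roots, and closing with $\log(1-u)\geq -u/(1-u)$) is correct and simply fills in a verification the paper leaves to the reader.
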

Let us remark that this bound is pretty tight, 
since, according to equation \myeq{eq7.1}
with probability at least $1 - 2 \epsilon$, 
$$
\lvert M - m \rvert \leq \sqrt{\frac{v}{2n\epsilon}}.
$$
This can also be observed on the plots following
Proposition \thmref{prop1.4}. 

\subsection[Deviations of the empirical mean depending 
on the kurtosis]{Lower bound for the deviations of empirical mean 
depending on the variance and the kurtosis}
Let us now refine the previous lower bound by taking into 
account the kurtosis $\kappa$ of the sample distribution, assuming 
of course that it is finite. 
\begin{prop}
\label{prop7.3}
As soon as $\epsilon^{-1} \geq n \geq 16$, there exists a sample 
distribution with mean $m$, finite variance $v$ and 
finite kurtosis $\kappa$, such that with probability at least $2 \epsilon$,
\begin{multline*}
\lvert M - m \rvert \geq \max \Biggl\{ \biggl[ \frac{(\kappa-1)(1 - 8 \epsilon)}{4n\epsilon} \biggr]^{1/4}, \\
\Biggl[ \frac{(\kappa-1)}{2 n \epsilon} \biggl[ 1 - \left( \frac{ n \epsilon}{16} \right)^{1/4} 
- 4 \epsilon \biggr] \Biggr]^{1/4} - 
\sqrt{\frac{\log\bigl[ 16 / (n \epsilon) \bigr] v}{2n}} \Biggr\} 
\sqrt{\frac{v}{n}}. 
\end{multline*}
\end{prop}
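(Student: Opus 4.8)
Since the statement only asserts the existence of a bad distribution, the natural route is an explicit construction followed by a union-bound analysis, exactly in the spirit of Proposition~\ref{prop2.2}, but now spending a fourth-moment (kurtosis) budget in addition to the variance. First I would reduce to $m=0$ by translation invariance, both of $M-m$ and of the class of distributions with given variance and kurtosis. Then I would take $\B{P}$ to be a symmetric two-scale mixture: with probability $1-2p$ draw from a light symmetric core (two atoms at $\pm \sqrt{v_0}$ with $v_0 \leq v$ are the cheapest choice, having core kurtosis $1$), and with probability $p$ each place an atom at $+a$ and at $-a$ for a large outlier value $a$. Symmetry forces $m=0$ and $\B{P}(M \geq \eta) = \B{P}(M \leq -\eta)$, so it suffices to bound one tail by $\epsilon$. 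The variance equation reads $v = (1-2p)v_0 + 2p a^2$ and the fourth-moment equation $\kappa v^2 = \B{E}(Y^4) = (1-2p)v_0^2 + 2p a^4$; eliminating the core contribution isolates the crucial factor $\kappa - 1$, since $2p a^4 \simeq (\kappa-1) v^2$, which is precisely the variance of $(Y-m)^2$.

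For the deviation event I would proceed as follows. On the event $A$ that exactly one of the $n$ samples equals $+a$, that none equals $-a$, and that the empirical mean $\overline{Y}$ of the remaining $n-1$ core samples satisfies $\overline{Y} \geq -t$, one has $M \geq a/n - t$. I would lower-bound $\B{P}(A)$ by $\B{P}(\text{one }+a) - \B{P}(\geq 2 \text{ at } +a) - \B{P}(\geq 1 \text{ at } -a) - \B{P}(\overline{Y} < -t)$, using $1-(1-p)^n \geq np(1 - \tfrac{n-1}{2}p)$ for the first term, crude binomial bounds of order $\epsilon$ and $\epsilon^2$ for the middle two, and a Hoeffding / sub-Gaussian tail $\B{P}(\overline{Y} < -t) \leq \exp\bigl(-n t^2/(2v)\bigr)$ for the last, the core being bounded in $[-\sqrt{v},\sqrt{v}]$. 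Choosing $t = \sqrt{\log[16/(n\epsilon)]\, v/(2n)}$ makes this last probability exactly $(n\epsilon/16)^{1/4}$, which is where that term originates; the surviving probability budget collects into the factor $1 - (n\epsilon/16)^{1/4} - 4\epsilon$. Calibrating $p$ so that $\B{P}(A) = \epsilon$ then forces $a^4 = (\kappa-1) v^2 n \bigl[1 - (n\epsilon/16)^{1/4} - 4\epsilon\bigr]/(2\epsilon)$, and $\eta = a/n - t$ yields the second member of the maximum.

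The first, cruder member comes from a cheaper handling of the core fluctuation. Instead of subtracting a threshold $t$, I would use only the symmetry of the core to assert that its empirical mean is $\geq 0$ with probability at least $1/2$, so that $M \geq a/n$ on the corresponding event; this avoids any subtracted square-root term but costs a factor $1/2$ in probability, and hence requires doubling the outlier budget, $np \simeq 2\epsilon$. The same fourth-moment bookkeeping then produces $a^4 = (\kappa-1) v^2 n (1-8\epsilon)/(4\epsilon)$, giving $\eta = a/n = \sqrt{v/n}\,\bigl[(\kappa-1)(1-8\epsilon)/(4n\epsilon)\bigr]^{1/4}$. Taking the better of the two analyses delivers the maximum. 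The hypotheses $n \geq 16$ and $\epsilon^{-1} \geq n$ are exactly what guarantee $n\epsilon \leq 1$, hence $(n\epsilon/16)^{1/4} \leq 1/2 < 1$ and $\log[16/(n\epsilon)] > 0$, so that every correction factor stays in $(0,1)$ and both bounds are positive and meaningful.

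The main obstacle is the simultaneous bookkeeping. One must keep the total probability pinned at $2\epsilon$ while subtracting the probabilities of the parasitic configurations (a second $+a$, a compensating $-a$, an unfavourable core fluctuation), and at the same time isolate $\kappa - 1$ rather than $\kappa$ by correctly removing the core's contribution to the fourth moment, all while tracking the precise constants: the $4$ versus $2$ in the denominator, $8\epsilon$ versus $4\epsilon$, and the $(n\epsilon/16)^{1/4}$ term. The genuine trade-off is between losing probability (the factor-$1/2$ symmetry argument) and losing deviation (the subtracted $t$), and this is precisely what produces the two distinct members of the maximum; matching their constants to the statement is the real work, while the binomial and Hoeffding estimates themselves are routine.
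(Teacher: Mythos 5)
Your construction is exactly the paper's (outlier atoms at $\pm a = \pm n\eta$ with probability $q$ each, a symmetric two-point core at $\pm\xi$, the same single-outlier event, the same Hoeffding control of the core fluctuation, and the same two-branch optimization producing the maximum), and your final calibrations of $a^4$ and of both constants are the right ones. But the one step where you depart from the paper --- the additive union bound $\B{P}(A) \geq \B{P}(\text{one }+a) - \B{P}(\geq 2\text{ at }+a) - \B{P}(\geq 1\text{ at }-a) - \B{P}(\ov{Y} < -t)$ --- fails quantitatively. First, $\B{P}(\geq 1 \text{ at } -a) = 1-(1-p)^n \simeq np$ is of the same order $\epsilon$ as the main term, not $\epsilon^2$; to make it second order you would need to intersect it with the event that some sample hits $+a$. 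Second, and fatally, with your choice of $t$ you have $\B{P}(\ov{Y} < -t) \leq (n\epsilon/16)^{1/4}$, but this is in general huge compared with $\epsilon$: in the admissible regime $\epsilon \leq 1/n$ one always has $(n\epsilon/16)^{1/4} \geq \epsilon$ (it equals $1/2$ when $n\epsilon = 1$), so subtracting it from a main term of order $np \simeq \epsilon$ yields a negative lower bound throughout the range covered by the proposition. Your own claimed conclusion betrays the issue: a factor $1 - (n\epsilon/16)^{1/4} - 4\epsilon$ acting on the probability budget cannot arise from an additive subtraction of probabilities; it must enter multiplicatively.

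The repair is the factorization the paper uses (and which you implicitly use in your first branch when you say the symmetry argument ``costs a factor $1/2$''): conditionally on the outlier pattern, the core samples are i.i.d.\ symmetric, so $\B{P}(A) = np(1-2p)^{n-1}\,\B{P}(\text{core mean} \geq -t)$ exactly, and the Hoeffding bound enters as the multiplicative factor $1-\chi$ with $\chi = \min\bigl\{1/2, \exp\bigl(-nt^2/(2v)\bigr)\bigr\}$. The paper then calibrates $q = \frac{\epsilon}{n(1-\chi)}\bigl(1 - \frac{4\epsilon}{n(1-\chi)}\bigr)^{-(n-1)}$, checks $q \leq 2\epsilon/(n(1-\chi))$ via the convexity inequality $(1-x)^{n-1} \geq 1-nx$, and restricts the optimization over $\chi$ to the two values $\chi = (n\epsilon)^{1/4}/2$ and $\chi = 1/2$, which give precisely your two members of the maximum. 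A further point to make exact rather than heuristic: your bookkeeping $2pa^4 \simeq (\kappa-1)v^2$ needs the favorable-direction inequality; writing $\kappa = f_q(n\eta/\xi)$ with $f_q(x) = (1-2q+2qx^4)/(1-2q+2qx^2)^2$ and inverting gives $\eta \geq \bigl((\kappa - 1 + 2q)/(2q)\bigr)^{1/4}\sqrt{v}/n$ exactly, because $(1-2q+2qx^2)^2 \geq 1$ for $x \geq 1$, so the $\kappa-1$ survives with no approximation error. With these two corrections your argument coincides with the paper's proof.
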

Let us remark that the asymptotic behaviour of this lower bound when 
$n \epsilon$ and $\log(\epsilon^{-1})/n$ both tend to zero matches 
the upper bound of Proposition \thmref{prop6.1} up to a multiplicative 
factor $\left( \frac{\kappa}{\kappa-1} \right)^{1/4} \leq 1.11$ 
when the kurtosis is $\kappa \geq 3$, which is the kurtosis of the 
Gaussian distribution.

The plots following Proposition \thmref{prop3.3} show that this lower
bound is not too far from the upper bound obtained by combining 
Proposition \thmref{prop6.1}, Proposition \thmref{prop7.3.2} and 
equation \myeq{eq7.1}. 

\section{Proofs}

\subsection{Proof of Corollary \thmref{cor4.3}} 
\label{proof4.3}
Let us remark first that condition \myeq{eq4.4} implies condition 
\myeq{eq4.2}, as can be easily checked. 
Putting
$\ds x = \sqrt{\frac{n}{(\kappa-1) \bigl[ 4 \log(\epsilon_1^{-1}) + 1/2 
\bigr]}}$, so that $p = \lfloor x \rfloor$,  
we can also check that $p-1 \geq x/2$ and $n - p + 1 \geq n/2$.
We can then write 
\begin{multline*}
\sqrt{\frac{2 \chi \log(\epsilon_1^{-1})}{n-r)}} \biggl( 1 + 
\frac{4 \log(\epsilon_1^{-1})}{q} \biggr) \\ \leq 
\sqrt{\frac{2 (\kappa-1) \log(\epsilon_1^{-1})}{n}} 
\exp \biggl( \frac{1}{(\kappa-1)(p-1)} \\ \shoveright{+ \frac{p-1}{2(n - p +1)} 
+ \frac{4 \log(\epsilon_1^{-1}) p}{n-p+1} \biggr) }\\ 
\leq 
\sqrt{\frac{2 (\kappa-1) \log(\epsilon_1^{-1})}{n}} 
\exp \biggl( \frac{2}{(\kappa-1)x} + \frac{2 x \bigl[ 4 \log(\epsilon_1^{-1}) 
+ 1/2 \bigr]}{n} \biggr) 
\\ 
= \sqrt{\frac{2(\kappa-1) \log(\epsilon_1^{-1})}{n}} \exp \Biggl( 
4 \sqrt{\frac{4 \log(\epsilon_1^{-1}) + 1/2}{(\kappa-1)n}} \Biggr). 
\end{multline*}

\subsection{Proof of Lemma \thmref{lem3.4}}

Let us introduce some modification of $\psi$ in order to improve the compromise
between $\inf \psi''$ and $\sup \psi$.  
Let us put $\ov{\psi}(x) = \log (1 + x + x^2/2)$. We would like to 
squeeze some function $\chi$ between $\psi$ and $\ov{\psi}$, in such a way that 
$\inf \chi'' = \inf \ov{\psi}''$. This will be better than 
using $\psi$ itself since 
\begin{align*}
\inf \ov{\psi}'' & = - 1/4 \\ 
\text{whereas } \inf \psi'' & = - 2. 
\end{align*}
Indeed these two values can be computed in the following way. 
Let us put $\varphi(x) = \exp \bigl[\, \ov{\psi}(x) \bigr] = 
1 + x + x^2/2$. It is easy to check that 
\begin{align*}
\ov{\psi}''(x) & = - \varphi(x)^{-1} \bigl[ 1 - \varphi(x)^{-1} \bigr], \\
\psi''(x) & = \varphi(-x)^{-1} \big[ 1 - \varphi(-x)^{-1} \bigr],
\end{align*}
implying that $ \ov{\psi}''(x) \geq - 1/4$. This inequality becomes
an equality 
when $\varphi(x) = 2$, that is when $x = \sqrt{3} - 1 \simeq 0.73$. 
In the same way $\psi''(x) \geq -2$ and equality is reached 
when $\varphi( - x) = 1/2$, that is when $x = 1$. We are going to build a function 
$\chi$ which follows $\psi$ when $x \leq x_1$, where $x_1$ satisfies 
$\psi''(x_1) = -1/4$. The value of $x_1$ is computed from the equation 
$\varphi(-x_1)^{-1} = (1 + \sqrt{2})/2$. Let $y_1 = \psi(x_1)$ and 
$p_1 = \psi'(x_1)$. They have the following values
\begin{align*}
x_1 & = 1 - \sqrt{4 \sqrt{2} - 5} \simeq 0.1895\\
y_1 & = - \log \bigl[ 2 \bigl( \sqrt{2} - 1 \bigr) \bigr] 
\simeq 0.1882, \\
p_1 & = \frac{ \sqrt{4 \sqrt{2} - 5}}{2 (\sqrt{2} - 1)} \simeq 0.978.
\end{align*}
After $x_1$, we continue $\chi$ with a quadratic function, until 
its derivatives cancels. 
Thus, the second derivative of $\chi$ being less than the second
derivative of $\ov{\psi}$ at each point of the positive real 
line, we are sure that $\chi(x) \leq \ov{\psi}(x)$ for 
any $x \in \B{R}$. The function $\chi$ built in this way 
satisfies the equation 
$$
\chi(x) = 
\begin{cases}
\psi(x), & x \leq x_1, \\
\ds y_1 + p_1(x-x_1) - \frac{1}{8}(x-x_1)^2, & x_1 \leq x \leq x_1 + 4 p_1, \\
y_1 + 2 p_1^2 \leq 2.103, &  x \geq x_1 + 4 p_1. 
\end{cases}
$$
As we have proved, and as can be seen on the next plot, 
the function $\chi$ is such that 
$$
\psi(x) \leq \chi(x) \leq \ov{\psi}(x), \qquad x \in \B{R}.
$$
\mbox{} \hfill \raisebox{-0.8cm}[9.2cm][0cm]{\includegraphics{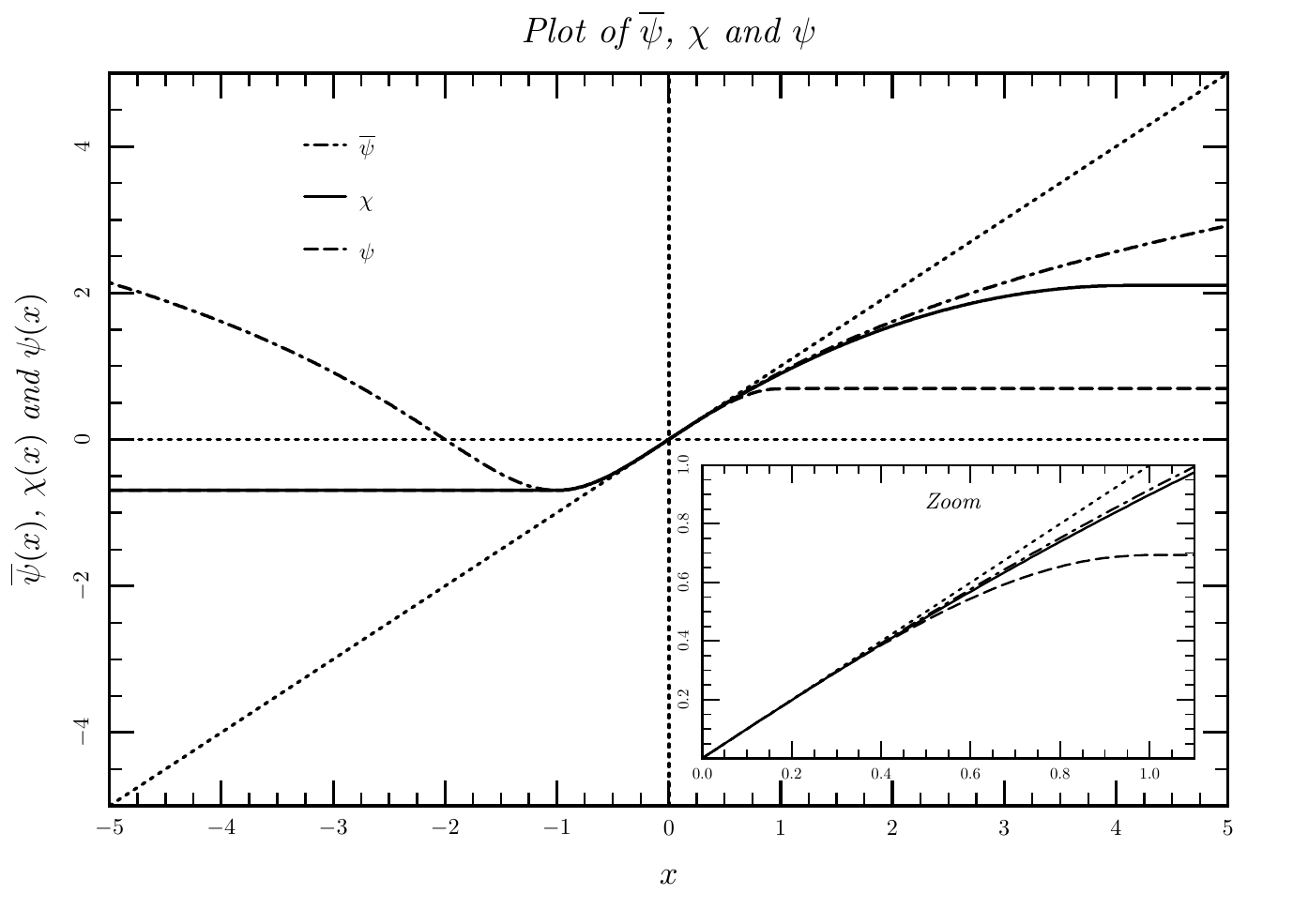}}
\hfill \mbox{}\\
Let us now compare $\chi$ with a suitable convex function (in order 
to apply Jensen's inequality).
Let us introduce to this purpose the function 
$\ov{\chi}_{x_*} = \chi(x) + \frac{1}{8}(x - x_*)^2$, 
which 
is convex for any choice of the parameter $x_* \in \B{R}$.

Let us consider as in the lemma we have to prove some 
function $f \in L_2(\rho)$ and let us choose
$x_* = \int \rho(d \beta) f(\beta)$
and 
put $\tint \rho(d \beta) \bigl[ f(\beta) - \tint \rho f 
\bigr]^2 = \Var_{\rho}(f)$. 
We obtain by Jensen's inequality 
\begin{multline*}
\psi (x_*)  
\leq \chi (x_*) = \ov{\chi}_{x_*} ( x_*)     
= \ov{\chi}_{x_*} \bigl[ \tint \rho( d \beta) f( \beta ) \bigr]    
\leq \tint \rho( d \beta) \ov{\chi}_{x_*}  \bigl[ f( \beta) 
\bigr] \\ = \tint \rho( d \beta) \chi \bigl[ f(\beta) \bigr]
+ \frac{1}{8} \Var_{\rho}(f). 
\end{multline*}
On the other hand, 
it is clear that
$$
\psi (x_*) \leq \tint \rho(d\beta) \chi \bigl[ 
f(\beta) \bigr] - \inf \chi  + \sup \psi  
= \tint \rho(d \beta) \chi \bigl[ f(\beta) \bigr] + \log(4).
$$
We have proved 
\begin{lemma} For any posterior distribution $\rho$ and any $f \in L_2(\rho)$, 
$$
\psi \bigl[ \tint \rho( d \beta) f(\beta) \bigr] 
\leq \tint \rho(d \beta) \chi \bigl[ f(\beta) \bigr] 
+ \min \Bigl\{ \log(4), \frac{1}{8}\Var_{\rho}(f) \Bigr\}.
$$
\end{lemma}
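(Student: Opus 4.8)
The plan is to bound $\psi\bigl[\tint \rho(d\beta) f(\beta)\bigr]$ in two independent ways and then keep the smaller estimate. Everything hinges on the auxiliary function $\chi$ built above, so the first thing I would pin down is the single structural property that drives the argument: $\chi'' \geq -1/4$ at every point of $\B{R}$, equivalently that the shifted function is convex. On the half-line $x \leq x_1$, where $\chi$ coincides with $\psi$, this is exactly what the choice of the break point $x_1$ secures, since $x_1$ was fixed by $\psi''(x_1) = -1/4$ and, reading off the explicit form $\psi''(x) = \varphi(-x)^{-1}\bigl[1 - \varphi(-x)^{-1}\bigr]$, the values of $\psi''$ never drop below $-1/4$ to the left of $x_1$ (on the stretches where $\psi$ agrees with $\ov{\psi}$ this is just $\ov{\psi}'' \geq -1/4$, and where $\psi$ is flat it is $\psi''=0$). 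On the middle quadratic arc one has $\chi'' = -1/4$ identically, and on the terminal flat part $\chi'' = 0$; matching values and first derivatives at the two junctions makes $\chi$ a $C^1$ function, which is all a convexity argument will require.

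Next I would introduce the shifted function $\ov{\chi}_{x_*}(x) = \chi(x) + \frac{1}{8}(x - x_*)^2$. Its second derivative is $\chi'' + 1/4 \geq 0$, so it is convex for every $x_*$. Choosing $x_* = \tint \rho(d\beta) f(\beta)$ and applying Jensen's inequality to $\ov{\chi}_{x_*}$ gives
$$
\psi(x_*) \leq \chi(x_*) = \ov{\chi}_{x_*}(x_*) \leq \tint \rho(d\beta)\, \ov{\chi}_{x_*}\bigl[ f(\beta) \bigr] = \tint \rho(d\beta)\, \chi\bigl[ f(\beta) \bigr] + \frac{1}{8}\Var_{\rho}(f),
$$
where the first inequality is the pointwise bound $\psi \leq \chi$ and the final identity uses that $x_*$ is the $\rho$-mean of $f$, so that $\tint \rho(d\beta)\bigl[f(\beta)-x_*\bigr]^2 = \Var_{\rho}(f)$. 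This produces the $\frac{1}{8}\Var_{\rho}(f)$ branch of the minimum.

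For the $\log(4)$ branch I would argue crudely: since $\chi\bigl[f(\beta)\bigr] \geq \inf \chi$ pointwise and $\psi(x_*) \leq \sup \psi$, one gets $\psi(x_*) \leq \tint\rho(d\beta)\,\chi\bigl[f(\beta)\bigr] - \inf\chi + \sup\psi$. With the narrow choice of $\psi$ one has $\sup\psi = \log(2)$ and $\inf\chi = \inf\psi = -\log(2)$, whence $-\inf\chi + \sup\psi = \log(4)$. Combining the two estimates and taking the minimum yields the stated inequality. The only genuinely delicate step is the verification in the first paragraph that $\chi'' \geq -1/4$ throughout --- i.e. that $\ov{\chi}_{x_*}$ is convex; once that is granted, the remainder is a one-line Jensen step followed by an elementary pointwise bound.
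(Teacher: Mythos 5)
Your proof is correct and takes essentially the same route as the paper: the identical Jensen argument applied to the convexified function $\ov{\chi}_{x_*}(x) = \chi(x) + \frac{1}{8}(x-x_*)^2$ with $x_*$ chosen as the $\rho$-mean of $f$, combined with the same crude pointwise bound $-\inf \chi + \sup \psi = \log(4)$ for the other branch of the minimum. Your explicit verification that $\chi'' \geq -1/4$ throughout (via the break point $\psi''(x_1) = -1/4$, the quadratic arc with $\chi'' = -1/4$, the flat tail, and $C^1$ matching at the junctions) is sound and merely spells out what the paper asserts when it declares $\ov{\chi}_{x_*}$ convex for every $x_*$.
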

To end the proof of Lemma \thmref{lem3.4}, it remains to
establish that for any
$x \in \B{R}$ and $y \in \B{R}_+$, 
$$
\chi(x) + \min \Bigl\{ \log(4), \frac{y}{8} \Bigr\}  
\leq \log \Bigl[ 1 + x + x^2/2 + \frac{a y}{2} \Bigr], 
$$
where 
\begin{equation}
\label{eq3.4}
a = \frac{3 \exp \bigl[ \sup(\chi) \bigr]}{4 \log(4)} 
= \frac{3 \exp(y_1 + 2 p_1^2)}{4 \log(4)} \leq 4.43. 
\end{equation}
Indeed $a$ should satisfy 
$$
a \geq \frac{2}{y} \biggl[ \exp \bigl[ \chi(x) \bigr] \min 
\Bigl\{ 4, \exp \Bigl( \frac{y}{8} \Bigr) \Bigr\} - \Bigl( 1 + x + \frac{x^2}{2} \Bigr) \biggr], \qquad x \in \B{R}, \quad y \in \B{R}_+.
$$
Since $\ds \exp \bigl[ \chi(x) \bigr] \leq 1 + x + \frac{x^2}{2}$, the right-hand side of this inequality is less than 
$$
\frac{2 \exp \bigl[ \chi(x) \bigr]}{y} \biggl( \min \Bigl\{ 4, 
\exp \Bigl( \frac{y}{8} \Bigr) \Bigr\}  - 1 \biggr).
$$
As $\ds y \mapsto y^{-1} \Bigl[ \exp \Bigl( \frac{y}{8} \Bigr) - 1 \Bigr]$ 
is increasing on $\B{R}_+$, this last expression reaches its maximum 
when $x \in \arg \max \chi$ and $\ds \exp \Bigl( \frac{y}{8} \Bigr) = 4$,
and is then equal to $\ds \frac{3 \exp \bigl[ \sup(\chi) \bigr]}{4 \log(4)}$, 
which is the value stated for $a$ in equation \eqref{eq3.4} above.  

\subsection{Proof of Proposition \thmref{prop6.1}}

Consider the function
$$
g(x) = 
\begin{cases} 
x - \log \bigl( 1 + x + x^2/2 \bigr), & x \geq 0,\\ 
x + \log \bigl( 1 - x + x^2/2 \bigr), & x \leq 0.
\end{cases}
$$
This function could also be defined as $g(x) = x - \psi(x)$, 
where $\psi$ is the wide version of the influence function 
defined by equation \myeq{eq1.2w}.
It is such that 
$$
g'(x) = \frac{x^2}{2\bigl( 1 + x + x^2/2 \bigr)} 
\leq \frac{x^2}{2}, \qquad x \geq 0.
$$
Therefore
$$
0 \leq g(x) \leq \frac{x^3}{6}, \qquad x \geq 0,
$$
implying by symmetry that 
$$
\lvert g(x) \rvert \leq \frac{\lvert x \rvert^3}{6}, \qquad x \in \B{R}.
$$
We can also remark that 
$$
g'(x) \leq \frac{x}{2(1 + \sqrt{2})}, \qquad x \geq 0,
$$
implying that
$$
\lvert g(x) \rvert \leq \frac{x^2}{4(1 + \sqrt{2})}, \qquad x \in \B{R}.
$$
As it is also obvious that $\lvert g(x) \rvert \leq \lvert x \rvert$, we get 
\begin{lemma}
$$
\lvert g(x) \rvert \leq \min \biggl\{ \lvert x \rvert, \frac{x^2}{4(1+\sqrt{2})}, \frac{\lvert x \rvert^3}{6} 
\biggr\}.
$$
\end{lemma}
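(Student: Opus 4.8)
The plan is to prove the three one-sided estimates separately on the half-line $x \geq 0$ and then extend them to all of $\B{R}$ using that $g$ is odd. First I would record the oddness: for $x \geq 0$ the defining formula gives $g(-x) = -x + \log\bigl(1 + x + x^2/2\bigr) = -g(x)$, so it suffices to bound $g(x)$ for $x \geq 0$ and afterwards replace $x$ by $\lvert x \rvert$. On $[0,\infty)$ every target quantity $x$, $x^2/\bigl(4(1+\sqrt{2})\bigr)$, $x^3/6$ is nonnegative, so it is enough to establish $g(x) \geq 0$ together with the three upper bounds.

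The cubic and linear bounds are almost immediate from the expression for $g'$ already displayed. Since $g(0) = 0$ and $0 \leq g'(x) \leq x^2/2$ on $[0,\infty)$, integrating gives $0 \leq g(x) \leq x^3/6$, which is the cubic estimate; the lower bound $g(x) \geq 0$ will be reused. For the linear bound I would note that for $x \geq 0$ the (wide) influence function satisfies $\psi(x) = \log\bigl(1 + x + x^2/2\bigr) \geq 0$, because its argument is at least $1$; hence $g(x) = x - \psi(x) \leq x$, and combined with $g(x) \geq 0$ this yields $0 \leq g(x) \leq x$.

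The quadratic bound is the only step calling for a genuine computation. Starting from $g'(x) = \dfrac{x^2}{2\bigl(1 + x + x^2/2\bigr)}$, the asserted inequality $g'(x) \leq \dfrac{x}{2(1+\sqrt{2})}$ is equivalent, after clearing the positive denominators and cancelling one factor $x$, to $(1+\sqrt{2})\,x \leq 1 + x + x^2/2$, that is to $\tfrac{1}{2}\bigl(x - \sqrt{2}\bigr)^2 \geq 0$, which holds for every real $x$ (with equality exactly at $x = \sqrt{2}$). Integrating from $0$ to $x$ then gives $g(x) \leq \dfrac{x^2}{4(1+\sqrt{2})}$.

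Finally I would assemble the three estimates. Each has been shown in the form $0 \leq g(x) \leq B_i(x)$ on $[0,\infty)$, whence $g(x) \leq \min\bigl\{\, x,\ x^2/(4(1+\sqrt{2})),\ x^3/6 \,\bigr\}$ there; invoking the oddness of $g$ replaces $g(x)$ by $\lvert g(x)\rvert$ and $x$ by $\lvert x\rvert$, giving the stated lemma on all of $\B{R}$. The main, and in fact only, obstacle is the algebraic verification of the quadratic bound, and since it collapses to recognising a perfect square, no real difficulty remains.
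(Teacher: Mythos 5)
Your proof is correct and follows essentially the same route as the paper: bound $g'(x)$ on $[0,\infty)$ in the three ways ($g' \geq 0$, $g'(x) \leq x^2/2$, and $g'(x) \leq x/(2(1+\sqrt{2}))$), integrate from $0$, and extend by oddness of $g$. Your only addition is to spell out the perfect-square verification $1 + x + x^2/2 - (1+\sqrt{2})x = \tfrac{1}{2}(x-\sqrt{2})^2 \geq 0$ behind the derivative bound, which the paper merely asserts.
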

Now let us write
$$
M = m + \frac{1}{\alpha n} \sum_{i=1}^n \psi
\bigl[ \alpha(Y_i - m) \bigr] + \frac{1}{\alpha n} 
\sum_{i=1}^n G_i,
$$
where $\ds G_i = g \bigl[ \alpha(Y_i -m) \bigr]$
and $\psi$ is the wide influence function 
of equation \myeq{eq1.2w}.
As we have already seen in Proposition \thmref{prop1.2}, with probability at least 
$1 - 2 \epsilon_1$, 
$$
\biggl\lvert \frac{1}{n\alpha} \sum_{i=1}^n 
\psi \bigl[ \alpha(Y_i - m) \bigr] \biggr\rvert
\leq \frac{\alpha v}{2} + \frac{\log(\epsilon_1^{-1})}{n
\alpha}. 
$$
On the other hand with probability at least $1 - \epsilon_2$
$$
\biggl\lvert \frac{1}{n \alpha} \sum_{i=1}^n G_i 
\biggr\rvert \leq \frac{1}{n\alpha} \sum_{i=1}^n 
\lvert G_i \rvert \leq 
\frac{\B{E} \bigl( \lvert G \rvert 
\bigr)}{\alpha} + \frac{\ds \B{E} \biggl[ \biggl( 
\sum_{i=1}^n \lvert G_i \rvert - \B{E} \bigl( \lvert
G \rvert \bigr) \biggr)^4 \biggr]^{1/4}}{n \alpha \epsilon_2^{1/4}}
$$
Let us put $H_i= \lvert G_i \rvert$ 
and let us compute
\begin{multline*}
\frac{1}{n} \B{E} \biggl[ \biggl( \sum_{i=1}^n 
H_i - \B{E} \bigl( H \bigr) \biggr)^4 \biggr] =  
\B{E}\Bigl\{ \bigl[ H - \B{E}(H) \bigr] ^4 \Bigr\} + 
3(n-1) \B{E} \Bigl\{ \bigl[ H - \B{E} (H) \bigr]^2 
\Bigr\}^2 \\ \shoveleft{ \qquad = 
\B{E} (H^4) - 4 \B{E}(H^3)\B{E}(H) + 6 \B{E}(H^2)
\B{E}(H)^2 - 3 \B{E}(H)^4} 
\\ \shoveright{+ 3(n-1) \bigl[ \B{E}(H^2)^2 - 2 \B{E}(H^2)\B{E}(H)^2 
+ \B{E}(H)^4 \bigr]} \\ \shoveleft{\qquad \leq \B{E}(H^4) + 2 \B{E}(H^2)
\B{E}(H)^2 - 3 \B{E}(H)^4 } 
\\ \shoveright{+ 3(n-1) \bigl[ \B{E}(H^2)^2 - 2 \B{E}(H^2)\B{E}(H)^2 
+ \B{E}(H)^4 \bigr] }
\\ \shoveleft{\qquad \leq \B{E}(H^4) + 3(n-1) \B{E}(H^2)^2 - (3n - 2) 
\B{E}(H)^4 } 
\\ \leq \alpha^4 \kappa v^2 + 3(n-1) 
\frac{\kappa^2 \alpha^8 v^4}{\bigl[ 4(1+\sqrt{2})\bigr]^4} - (3n-2) \B{E}(H)^4. 
\end{multline*}
Moreover
$$
\B{E}(H) \leq \frac{\alpha^3}{6} \B{E} \bigl( 
\lvert Y - m \rvert^3 \bigr) \leq \frac{\alpha^3}{6} 
\sqrt{\kappa v^3} 
$$
Thus with probability at least $1 - 2 \epsilon_1 - \epsilon_2$, 
\begin{multline*}
\lvert M - m \rvert \leq \frac{\alpha v}{2} 
+ \frac{\log(\epsilon_1^{-1})}{n\alpha} \\ + \sup_{x 
\in (0, \sqrt{\kappa \alpha^6 v^3}/6)} 
\frac{x}{\alpha} + n^{-3/4}\alpha^{-1} \epsilon_2^{-1/4} 
\biggl[ \kappa \alpha^4 v^2 + \frac{3(n-1) \kappa^2 
\alpha^8 v^4}{\bigl[ 4(1 + \sqrt{2})\bigr]^4} 
- (3n-2) x^4 \biggr]^{1/4} 
\\ \leq \frac{\alpha v}{2} + \frac{\log(\epsilon_1^{-1})}{n\alpha} 
+ \frac{\sqrt{\kappa v^3}\alpha^2}{6} 
+ \frac{\sqrt{v}}{n^{3/4}} \left( \frac{\kappa}{\epsilon_2}\right)^{1/4} \biggl[ 1 + 
\frac{3(n-1) \kappa \alpha^4 v^2}{\bigl[ 4(1 + \sqrt{2})\bigr]^4} \biggr]^{1/4}. 
\end{multline*}
Let us take
$$
\alpha = \sqrt{\frac{2 \log(\epsilon_1^{-1})}{nv}}
$$
and let us put $\epsilon_1 = \lambda \epsilon$
and $\epsilon_2 = (1 - \lambda) 2 \epsilon$.

The bound can either be optimized in $\lambda$ or we can 
for simplicity choose $\lambda$ to balance the following 
factors 
$$
\sqrt{\frac{2 v \log(\lambda^{-1} \epsilon^{-1})}{n}} 
\simeq \frac{\sqrt{v}}{n^{3/4}} \biggl(\frac{\kappa}{2\epsilon} \biggr)^{1/4} \frac{\lambda}{4}.
$$
This leads to consider the value
$$
\lambda = \min \biggl\{ \frac{1}{2}, 
2 \sqrt{2 \log\left(\frac{\kappa}{2n\epsilon^{5}}\right)} 
\left( \frac{2 n \epsilon}{\kappa}
\right)^{1/4} \biggr\}. 
$$
As stated in Proposition \thmref{prop6.1}, 
with probability at least $1 - 2 \epsilon$, 
\begin{multline*}
\lvert M - m \rvert \leq \sqrt{\frac{2v\log(\lambda^{-1}
\epsilon^{-1})}{n}} + \frac{\sqrt{\kappa v} \log(
\lambda^{-1}\epsilon^{-1}) }{3n}
\\ + \sqrt{\frac{v}{n}} \left(\frac{\kappa}{
2 (1-\lambda) n \epsilon} \right)^{1/4} \Biggl[ 
1 + \frac{3(n-1)\kappa \log(\lambda^{-1}
\epsilon^{-1})^2 }{\bigl[ 4^3(1 + \sqrt{2})^4  
n^2} \Biggr]^{1/4} 
\\ \underset{\epsilon \rightarrow 0}{\simeq}
\sqrt{\frac{v}{n}} \left( \frac{\kappa}{2 n \epsilon} \right)^{1/4}. 
\end{multline*}

Another bound can be obtained applying Chebyshev's
inequality directly to the fourth moment of the empirical 
mean, which however does not reach the right speed when $\epsilon$ is small 
and $n$ large.

\begin{prop}
\label{prop7.3.2}
For any probability distribution whose kurtosis is not greater than $\kappa$, 
the empirical mean $M$ is such that with probability at least $1 - 
2 \epsilon$, 
$$
\lvert M - m \rvert \leq \biggl( \frac{3(n-1) + \kappa}{2 n \epsilon} 
\biggr)^{1/4} \sqrt{\frac{v}{n}}
$$
\end{prop}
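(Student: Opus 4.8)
The plan is to apply Chebyshev's inequality to the fourth power of $M - m$, exactly as announced in the remark preceding the statement. First I would reduce to the centered case by setting $X_i = Y_i - m$, so that $\B{E}(X_i) = 0$, $\B{E}(X_i^2) = v$ and $\B{E}(X_i^4) = \kappa v^2$, and $M - m = n^{-1} \sum_{i=1}^n X_i$.

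The core computation is the fourth central moment of the empirical mean. Expanding $\bigl( \sum_{i=1}^n X_i \bigr)^4$ as a sum over ordered quadruples of indices and using independence together with $\B{E}(X_i) = 0$, every monomial in which some index appears exactly once (or exactly three times) vanishes in expectation. Only two types of term survive: the diagonal terms $X_i^4$, of which there are $n$, each with expectation $\kappa v^2$; and the two-pair terms $X_i^2 X_j^2$ with $i \neq j$, where each unordered pair $\{i,j\}$ arises with multinomial coefficient $\binom{4}{2} = 6$ and has expectation $v^2$. This gives $\B{E} \bigl[ \bigl( \sum_i X_i \bigr)^4 \bigr] = n \kappa v^2 + 3 n(n-1) v^2$, hence $\B{E} \bigl[ (M-m)^4 \bigr] = v^2 \bigl[ 3(n-1) + \kappa \bigr] / n^3$.

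With the moment in hand, Chebyshev's inequality applied to the fourth moment gives $\B{P}\bigl( \lvert M-m \rvert \geq t \bigr) \leq t^{-4} \, \B{E}\bigl[(M-m)^4\bigr]$ for any $t > 0$. Choosing $t$ so that the right-hand side equals $2\epsilon$, that is $t^4 = \B{E}\bigl[(M-m)^4\bigr] / (2\epsilon)$, yields precisely $t = \bigl( \frac{3(n-1)+\kappa}{2n\epsilon} \bigr)^{1/4} \sqrt{v/n}$, which is the claimed bound.

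There is no serious obstacle here. The only slightly delicate point is the bookkeeping in the fourth-moment expansion, namely checking that the mixed terms $X_i^3 X_j$, $X_i^2 X_j X_k$ and $X_i X_j X_k X_l$ with distinct indices all drop out, and that the surviving two-pair terms carry the total coefficient $3n(n-1)$. Once the moment formula is established, the remaining steps are an exact substitution.
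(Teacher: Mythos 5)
Your proposal is correct and follows the paper's own proof essentially verbatim: centering, the fourth-moment expansion $\B{E}\bigl[(M-m)^4\bigr] = \bigl[3(n-1)+\kappa\bigr]v^2/n^3$, then Chebyshev's inequality applied to the fourth moment with the threshold chosen so the bound equals $2\epsilon$. The only cosmetic remark is that under the hypothesis one has $\B{E}\bigl[(Y-m)^4\bigr] \leq \kappa v^2$ rather than equality, but this only strengthens the inequality in the right direction.
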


\begin{proof} Let us assume to simplify notations and without
loss of generality that $\B{E}(Y) = 0$. 
$$
\B{E} \bigl( M^4 \bigr) = 
\frac{1}{n^4} \sum_{i=1}^n \B{E} (Y_i^4) 
+ \frac{1}{n^4} \sum_{i < j} 6 \B{E}(Y_i^2)\B{E}(Y_j^2) 
= \frac{\B{E}(Y^4)}{n^3} + \frac{3 (n-1) \B{E}(Y^2)^2}{n^3}.
$$
It implies that 
$$
\B{P} \Bigl( \lvert M - m \rvert \geq \eta \Bigr) \leq \frac{\B{E}(M^4)}{\eta^4} 
\leq \frac{\bigl[3(n-1) + \kappa \bigr] v^2}{n^3 \eta^4},
$$
and the result is proved by considering 
$\ds 2 \epsilon = \frac{\bigl[ 3(n-1)+\kappa \bigr] v^2}{n^3 \eta^4}$.  
\end{proof}

In our comparisons with new estimators, we took the minimum over the three bounds
given by Proposition \thmref{prop6.1}, Proposition  \thmref{prop7.3.2} and equation \myeq{eq7.1}.

\subsection{Proof of Proposition \thmref{prop2.1.2}}

Let us consider the distributions $\B{P}_1$ and 
$\B{P}_2$ of the sample $(Y_i)_{i=1}^n$ obtained when 
the marginal distributions are respectively the Gaussian
measure with variance $v$ and mean $m_1 = -\eta$ and the 
Gaussian measure with variance $v$ and mean $m_2 = \eta$. 
We see that, whatever the estimator $\wh{\theta}$, 
\begin{multline*}
\B{P}_1(\wh{\theta} \geq m_1 + \eta) 
+ \B{P}_2(\wh{\theta} \leq m_2 - \eta) = 
\B{P}_1(\wh{\theta} \geq 0) + \B{P}_2(\wh{\theta} \leq 0)
\\ \geq (\B{P}_1 \wedge \B{P}_2)(\wh{\theta} \geq 0) 
+ (\B{P}_1 \wedge \B{P}_2) (\wh{\theta} \leq 0) 
\geq \lvert \B{P}_1 \wedge \B{P}_2 \rvert,
\end{multline*}
where $\B{P}_1 \wedge \B{P}_2$ is the measure whose density
with respect to the Lebesgue measure (or equivalently 
with respect to any dominating measure, such as $\B{P}_1 + 
\B{P}_2$) is the minimum of the 
densities of $\B{P}_1$ and $\B{P}_2$ and whose total variation 
is $\lvert \B{P}_1 \wedge \B{P}_2 \rvert$. 

Now, using the fact that the empirical mean is a sufficient 
statistics of the Gaussian shift model, it is easy to 
realize that 
$$
\lvert \B{P}_1 \wedge \B{P}_2 \rvert = \B{P}_1(M \geq m_1 + \eta) 
+ \B{P}_2(M \leq m_2 - \eta),
$$
which obviously proves the proposition. 

\subsection{Proof of Proposition \thmref{prop2.2}}

Let us consider the distribution with support $ \{ 
- n \eta, 0, n \eta \}$ defined by
$$
\B{P}\bigl( \{n \eta\} \bigr) = \B{P}\bigl( \{-n\eta \}\bigr) =  
\bigl[ 1 - \B{P}(\{0\}) \bigr]/2 = \frac{v}{2n^2 \eta^2}.
$$
It satisfies $\B{E}(Y) = 0$, $\B{E}(Y^2) = v$ and 
$$
\B{P}(M \geq \eta) = \B{P}(M \leq - \eta) 
\geq \B{P}(M = \eta) = \frac{v}{2n\eta^2}
\left(1 - \frac{v}{n^2\eta^2} \right)^{n-1}.
$$

\subsection{Proof of Proposition \thmref{prop7.3}}

Let us consider for $Y$ the following distribution, 
with support $\{-n\eta, - \xi, \xi, n \eta \}$, where $\xi$ and $\eta$ are two positive real parameters, 
to be adjusted to obtain the desired variance and kurtosis.

\begin{align*}
\B{P}(Y = - n \eta) & = \B{P}(Y = n \eta) = q,\\
\B{P}(Y = - \xi) & = \B{P}(Y = \xi ) = \frac{1}{2} - q.
\end{align*}

In this case
\begin{align*}
m & = 0,\\
v & = (1 - 2q) \xi^2 + 2 q n^2 \eta^2, \\ 
\kappa v^2 & = (1 - 2q) \xi^4 + 2 q n^4 \eta^4.
\end{align*}
Thus 
$$
\kappa = f_q \bigl( n \eta / \xi \bigr),
$$
where
$$
f_q(x) = \frac{1 - 2q + 2q x^4}{\bigl( 1 - 2q 
+ 2q x^2 \bigr)^2}, \qquad x \geq 1.
$$
It is easy to see that $f_q$ is an increasing one
to one mapping of $(1, + \infty($ into itself.
We obtain
\begin{align*}
\xi & = \sqrt{ \frac{v}{1 - 2q + 2q 
\bigl[ f_q^{-1}(\kappa) \bigr]^2}} \leq \sqrt{v},
\\ \text{and } \quad \eta & = 
\frac{\xi f_q^{-1}(\kappa)}{n}.
\end{align*}
Consequently 
$$
\left( \frac{\kappa}{2q} \right)^{1/4} \frac{\sqrt{v}}{n} \geq \eta \geq \left( \frac{\kappa - 1 + 2q }{2q} \right)^{1/4} \frac{\sqrt{v}}{n}. 
$$
On the other hand, 
\begin{multline*}
\B{P} \bigl( M \geq \eta - \gamma \bigr) 
\geq \sum_{i=1}^n \B{P} \biggl( Y_i = n \eta, 
\frac{1}{n} \sum_{j, j\neq i} Y_j \geq - \gamma,
Y_j \in \{ - \xi, \xi \}, j \neq i \biggr) \\
\shoveleft{ \qquad = n \B{P}(Y_1 = n\eta ) (1 - 2q)^{n-1}} \\
\times \biggl[ 1 - \B{P} 
\biggl( \frac{1}{n} \sum_{i=2}^n Y_i \geq \gamma
\; \Big\vert \; Y_j \in \{ - \xi, \xi \bigr\}, 2 \leq 
j \leq n \biggr) \Biggr]. 
\end{multline*}
Let us remark that
\begin{multline*}
\B{P} \biggl( \frac{1}{n} \sum_{i=2}^n Y_i 
\geq \gamma \; \Big\vert \; Y_j \in \{-\xi, \xi \}, 
2 \leq j \leq n \biggr) \\ \leq \inf_{\lambda \geq 
0} \cosh \bigl( 
\lambda \xi / n \bigr)^{n-1}
\exp( -  \lambda \gamma) \leq \inf_{\lambda \geq 0} 
\exp \biggl( 
\frac{\lambda^2 \xi^2}{2 n} - \lambda \gamma
\biggr) \\ = \exp \biggl( - \frac{n \gamma^2}{2 \xi^2}
\biggr) \leq \exp \left( - \frac{n \gamma^2}{2 v} \right).
\end{multline*}
Also, by symmetry
$$
\B{P} \biggl( \frac{1}{n} \sum_{i=2}^n Y_i 
\geq \gamma \; \Big\vert \; Y_j \in \{-\xi, \xi \}, 
2 \leq j \leq n \biggr) \leq \frac{1}{2}. 
$$
Thus, as $m = 0$,
\begin{align*}
\B{P} \bigl( \lvert M - m \rvert \geq \eta - \gamma \bigr) 
& \geq 2 n q \bigl( 1 - 2q \bigr)^{n-1} 
\max \biggl\{ \frac{1}{2}, 1 - \exp \left( - \frac{n 
\gamma^2}{2 v} \right) \biggr\}.
\end{align*}
Let us put 
$$
\chi = \min \biggl\{ \frac{1}{2}, \exp \left( - \frac{n \gamma^2}{2 v} \right) 
\biggr\}
$$
and let us assume that $\ds \epsilon \leq \frac{1}{16}$.
Let us put
$$
q = \frac{\epsilon}{n (1 - \chi)} \left( 
1 - \frac{4 \epsilon}{n(1-\chi)} \right)^{-(n-1)} \leq 
\frac{2 \epsilon}{n(1-\chi)},
$$
the last inequality being a consequence of the convexity of $x 
\mapsto (1 - x)^{n-1} \geq 1 - (n-1) x \geq 1 - n x, 0 \leq x \leq 1$. 
Let us remark then that
$$
\B{P} \bigl( \lvert M - m \rvert \geq \eta - \gamma \bigr) \geq  
\frac{ 2 \epsilon \bigl(1 - 2 q\bigr)^{n-1}}{\ds \left( 1 
- \frac{4 \epsilon}{n (1 - \chi)} \right)^{n-1}}
\geq 2 \epsilon.
$$
Thus, with probability at least $2 \epsilon$, 
\begin{multline*}
\lvert M - m \rvert \geq \left[ \frac{(\kappa - 1 
+ 2 \epsilon / n )(1 - \chi)}{2 n \epsilon } \right]^{1/4} \left( 
1 - \frac{4 \epsilon}{n(1-\chi)} \right)^{(n-1)/4} \sqrt{\frac{v}{n}} \\ - \sqrt{\frac{2 \log(\chi^{-1}) v}{n}} \\ 
\geq \sup_{0 < \chi \leq 1/2} \left[ \frac{(\kappa - 1 + 2 \epsilon/n) \bigl(1 - \chi - 4 \epsilon 
\bigr)}{2n\epsilon} \right]^{1/4}\sqrt{\frac{v}{n}} - \sqrt{ \frac{2 \log(\chi^{-1}) \B{1}\bigl(\chi < 1/2\bigr) v}{n}}.
\end{multline*}
In order to obtain Proposition \thmref{prop7.3}, it is enough to restrict
optimization with respect to $\chi$ to the two values
$$
\chi = \frac{\bigl( n \epsilon \bigr)^{1/4}}{2} \text{ and } \chi = \frac{1}{2}.
$$

\section{Generalization to non-identically distributed samples}

The assumption that the sample is identically distributed can 
be dropped in Proposition \thmref{prop1.4}. Indeed, assuming only that the random variables
$(Y_i)_{i=1}^n$ are independent, meaning that their 
joint distribution is of the product form $\bigotimes_{i=1}^n 
\B{P}_i$, we can still write, 
for $W_i = \pm \alpha (Y_i - \theta)$,
\begin{multline*}
\B{E} \biggl\{ \exp \biggl[ \sum_{i=1}^n \log \biggl( 
1 + W_i + \frac{W_i^2}{2} 
\biggr) \biggr] \biggr\}\\ 
= \exp \biggl\{ \sum_{i=1}^n \log \biggl[ 1 + \B{E} \bigl( W_i) 
+ \frac{ \B{E}\bigl( W_i^2 \bigr)}{2} \biggr] \biggr\} 
\\ \leq  \exp \biggl\{ n \log \biggl[ 1 + \frac{1}{n} \sum_{i=1}^n 
\B{E} \bigl( W_i \bigr) + \frac{1}{2n} \sum_{i=1}^n 
\B{E} \bigl( W_i^2 \bigr) \biggr] \biggr\}.
\end{multline*}

Starting from these exponential inequalities, we can reach the same 
conclusions as in Proposition \thmref{prop1.4}, as long as we set
\begin{align*}
m & = \frac{1}{n} \sum_{i=1}^n \B{E}(Y_i),\\
\text{and } \quad v & = \frac{1}{n} \sum_{i=1}^n \B{E} \bigl[ (Y_i - m)^2 \bigr].
\end{align*}
We see that the mean marginal sample distribution 
$\ds \frac{1}{n} \sum_{i=1}^n \B{P}_i$ is playing here the same 
role as the marginal sample distribution in the i.i.d. case.

\section{Experiments}

\subsection{Mean estimators}
Theoretical bounds can explore high confidence levels better
than experiments, and have also the advantage to hold true 
in the worst case. They have led us to introduce new M-estimators, 
and in particular the one described in Proposition \thmref{prop3.3}.
Nonetheless, they may be expected to be 
rather pessimistic and are clearly insufficient to explore
the moderate deviations of the proposed estimators. In particular
it would be interesting to know whether the improvement in 
high confidence deviations has been obtained as a trade-off
between large and moderate deviations (by which we mean 
a trade-off between the left part and the tail 
of the quantile function of $\lvert \wh{\theta} 
- m \rvert$).

This is a good reason to launch into some experiments. 
We are going to test sample distributions of the form
$$
\sum_{i=1}^d p_i \C{N}(m_i, \sigma_i^2),
$$
where $d \in \{1,2,3\}$, $(p_i)_{i=1, \dots, d}$ is a vector 
of probabilities and $\C{N}(m, \sigma^2)$ is as usual 
the Gaussian measure with mean $m$ and variance $\sigma^2$. 
To visualize results, we have chosen to plot the quantile 
function of the deviations from the true parameter, 
that is the quantile function of $\lvert \wh{\theta} - m \rvert$, 
where $\wh{\theta}$ is one of the mean estimators studied 
in this paper.

Let us start with an asymmetric distribution with a so to speak 
intermittent high variance component. Let us take accordingly
\begin{align*}
p_1 & = 0.7, & m_1 & = 2, & \sigma_1 & = 1,\\
p_2 & = 0.2, & m_2 & = -2, & \sigma_2 & = 1,\\
p_3 & = 0.1, & m_3 & = 0, & \sigma_3 & = 30.
\end{align*}
In this case, $m=1$, $\kappa = 27.86$ and $v = 93.5$, so that, when 
the sample size $n$ is in the range $100 \leq n \leq 1000$, the variance
estimates we are proposing in this paper are not proved to be valid. 
For this reason we will challenge the empirical mean with the two 
following estimates : the estimate $\wh{\theta}_{\alpha}$ of Proposition 
\thmref{prop1.4} (where $\alpha$ is chosen using the true value 
of $v$) and a naive plug-in estimate, $\wh{\theta}_{\wh{\alpha}}$, 
where $\wh{\alpha}$ is set as was $\alpha$, replacing the true 
variance $v$ with its unbiased estimate $\wh{V}$ given by 
equation \myeq{eq4.1bis}. The parameter $\epsilon$ is set to 
$\epsilon = 0.05$ for both estimators, targeting the probability 
level $1 - 2 \epsilon = 0.9$.

We will plot also the sample median estimator, in this case where 
the distribution median is different from its mean, 
to show that robust location estimators for symmetric
distributions do not apply here.

When the sample size is $n = 100$, we obtain the following results
(computed from $1000$ experiments).\\
\mbox{} \hfill \raisebox{-1cm}[8.8cm][0cm]{\includegraphics{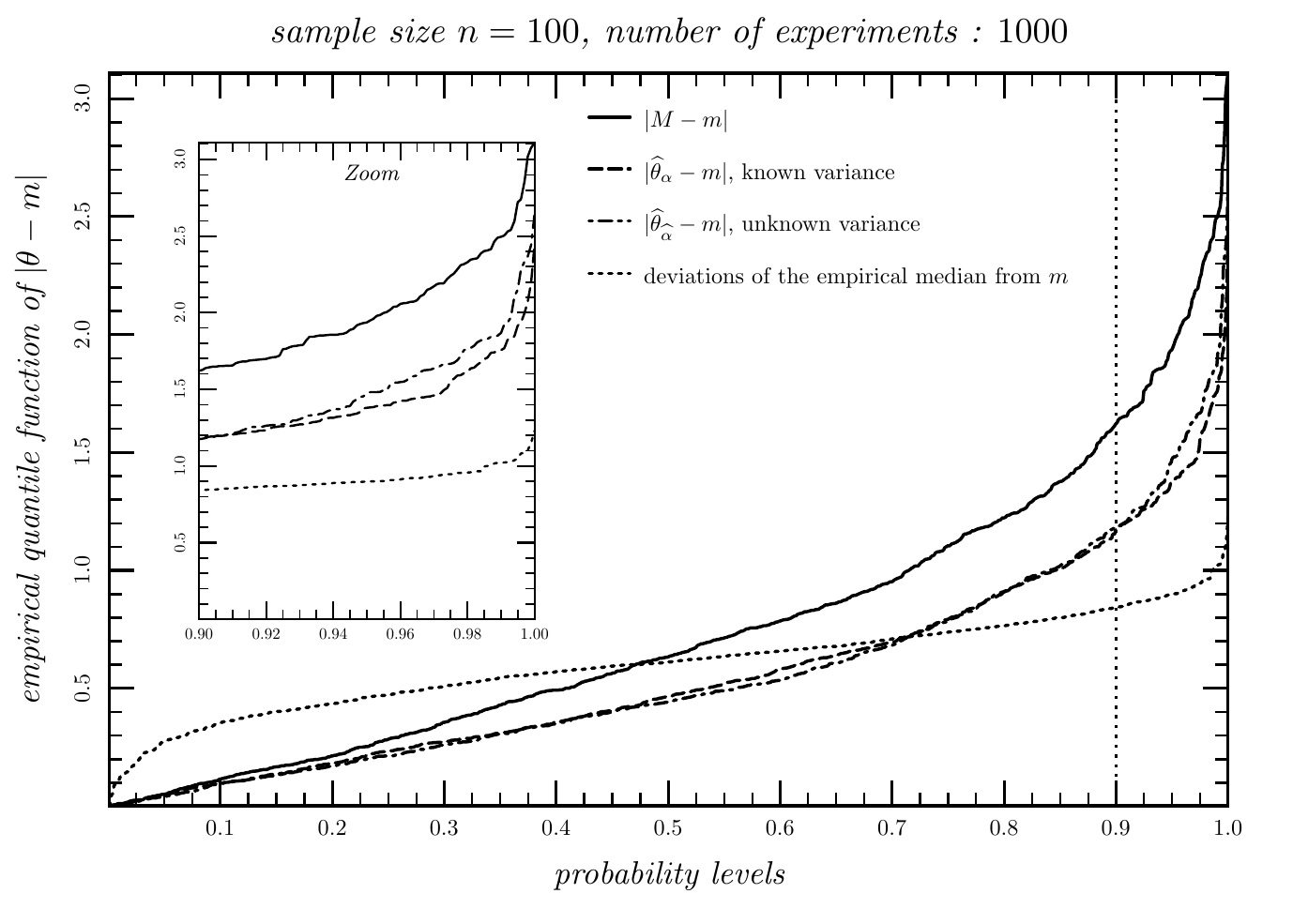}} 
\hfill \mbox{}\\
In this first example, the new M-estimators have uniformly better quantiles 
than the empirical mean, at any probability level. 
Moreover the variance can be harmlessly estimated 
from the data when it is unknown.
Thus, in this case, the empirical 
mean is outperformed from any conceivable point of view.

\vfill 

Let us now increase the sample size to $n=1000$.\\
\mbox{} \hfill \raisebox{-0.4cm}[10cm][0cm]{\includegraphics{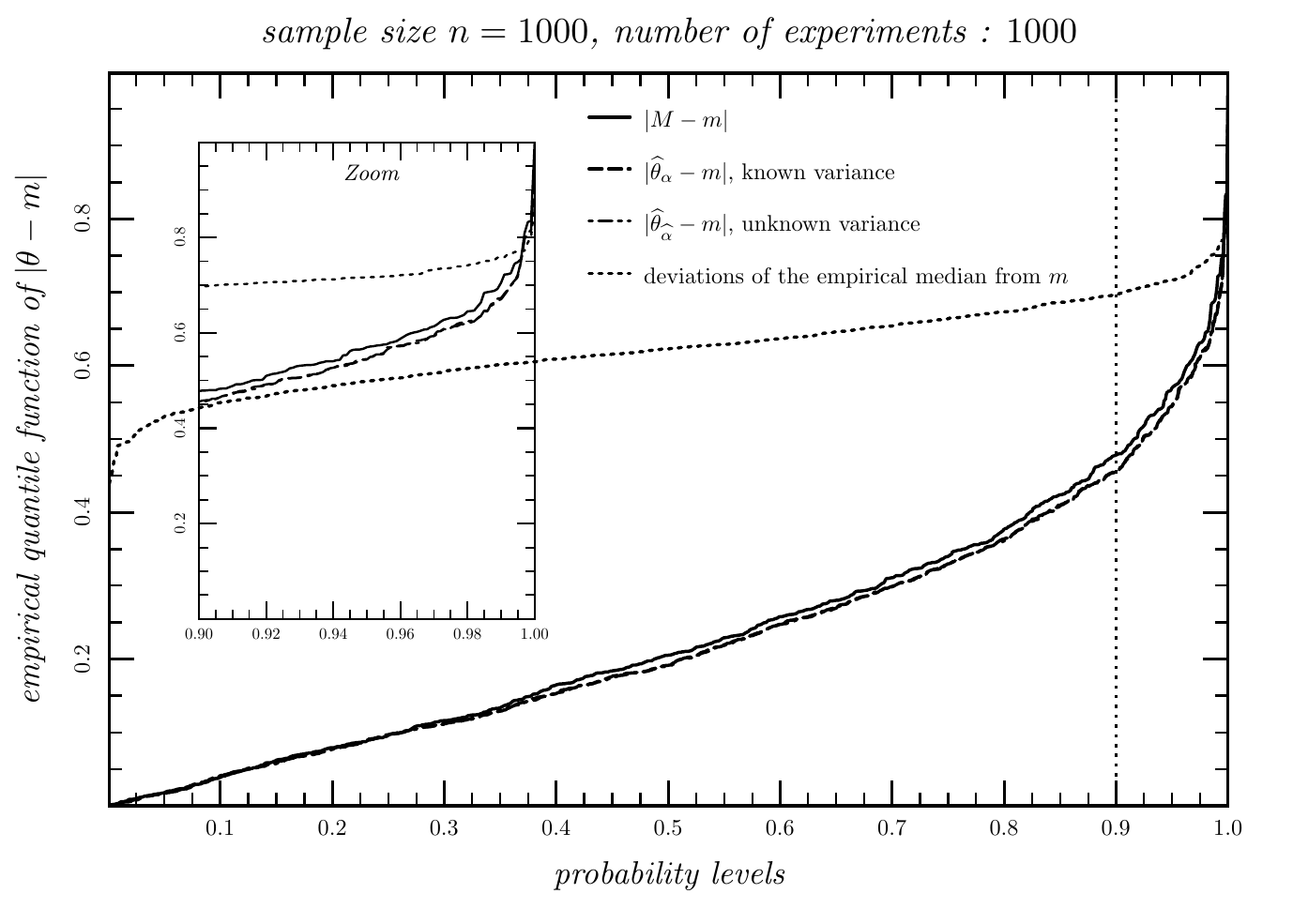}}   
\hfill \mbox{}\\
As should be expected, the values of the three estimators 
get close for this larger sample size (whereas it becomes 
more obvious that the empirical median is estimating 
something different from the mean). 

\vfill

The empirical mean can still be challenged 
for this larger sample size, but for a different 
sample distribution.
To illustrate this, 
let us consider a situation as simple as  
the mixture of two centered Gaussian measures. 
Let $d = 2$ and 
\begin{align*}
p_1 & = 0.99, & m_1 & = 0, & \sigma_1 & = 1, \\
p_2 & = 0.01, & m_2 & = 0, & \sigma_2 & = 30.
\end{align*} 
\pagebreak

Here $\kappa = 243.5$, $v = 9.99$ and $m=0$. 
We take $\epsilon = 0.005$, targeting the probability level 
$1 - 2 \epsilon = 1 - 10/n = 0.99$.\\
\mbox{} \hfill \raisebox{-0.8cm}[9.2cm][0cm]{\includegraphics{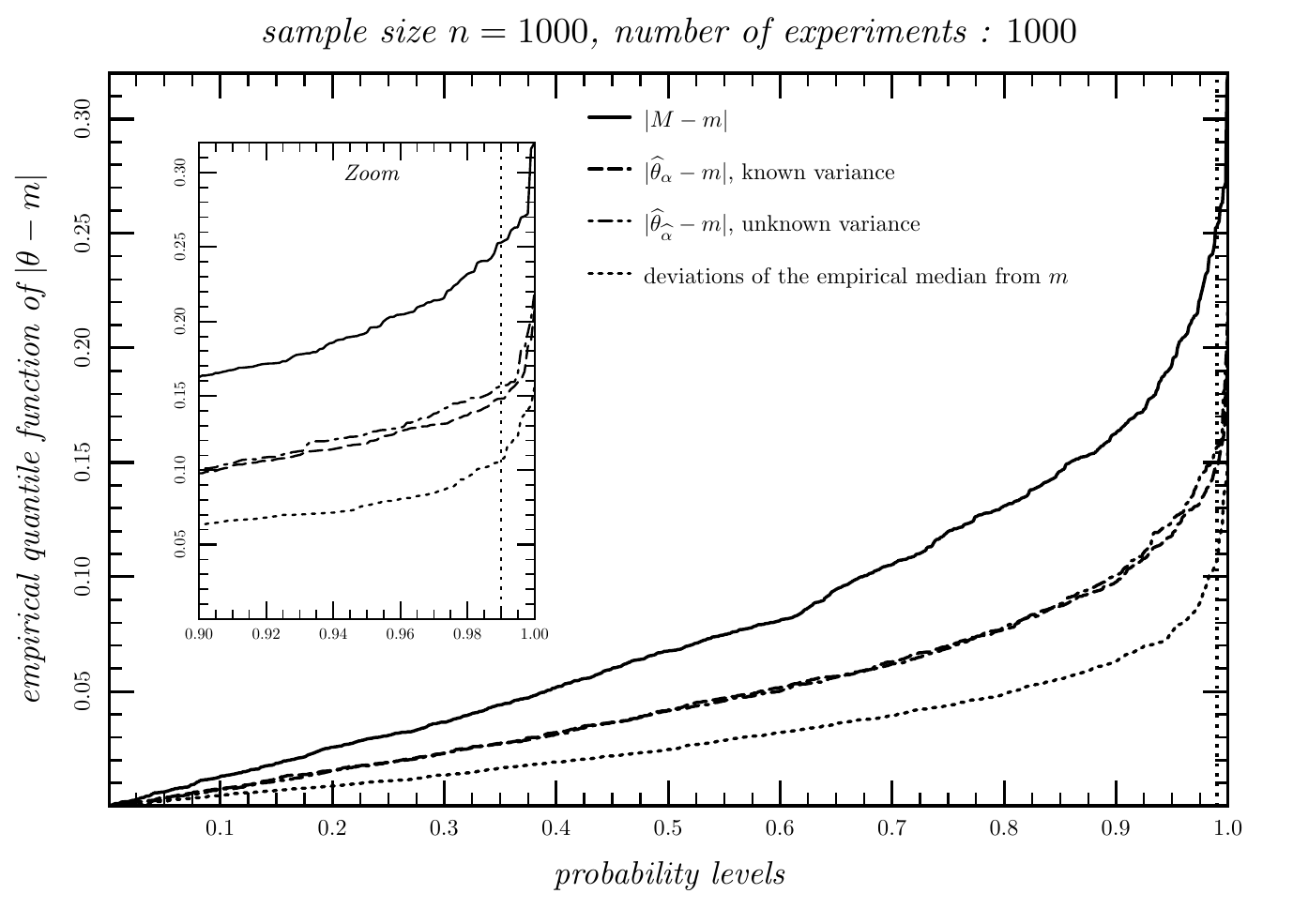}}
\hfill \mbox{}

\vfill

Let us show some heavily asymmetric situation where the left-hand side 
of the quantile function of the new estimators does not 
improve on the empirical mean.
In what follows $\kappa = 33.4$, $v = 72.25$, $m = -1.3$, 
and the mixture parameters are
\begin{align*}
p_1 & =  0.94 & m_1 & = 0, & \sigma_1 & = 1, \\
p_2 & = 0.01, & m_2 & = 20, & s_2 & = 20, \\ 
p_3 & = 0.05, & m_3 & = -30, & s_3 & = 20.
\end{align*}
We plot below two estimators using the value of the variance,
optimized for the confidence level $1 - 2 \epsilon$ with $\epsilon = 
0.05$ and $\epsilon = 0.0005$ respectively (the estimators 
with unknown variance show the same behaviour).

Here, choosing a moderate value of the estimator parameter $\epsilon$
is required to improve uniformly on the empirical mean performance, whereas 
higher values of $\epsilon$ produce a trade-off between low and high probability levels.
Whether this remains true in general would require
to be confirmed by more extensive experiments.

\noindent \mbox{} \hfill \raisebox{-0.8cm}[9.2cm][0cm]{\includegraphics{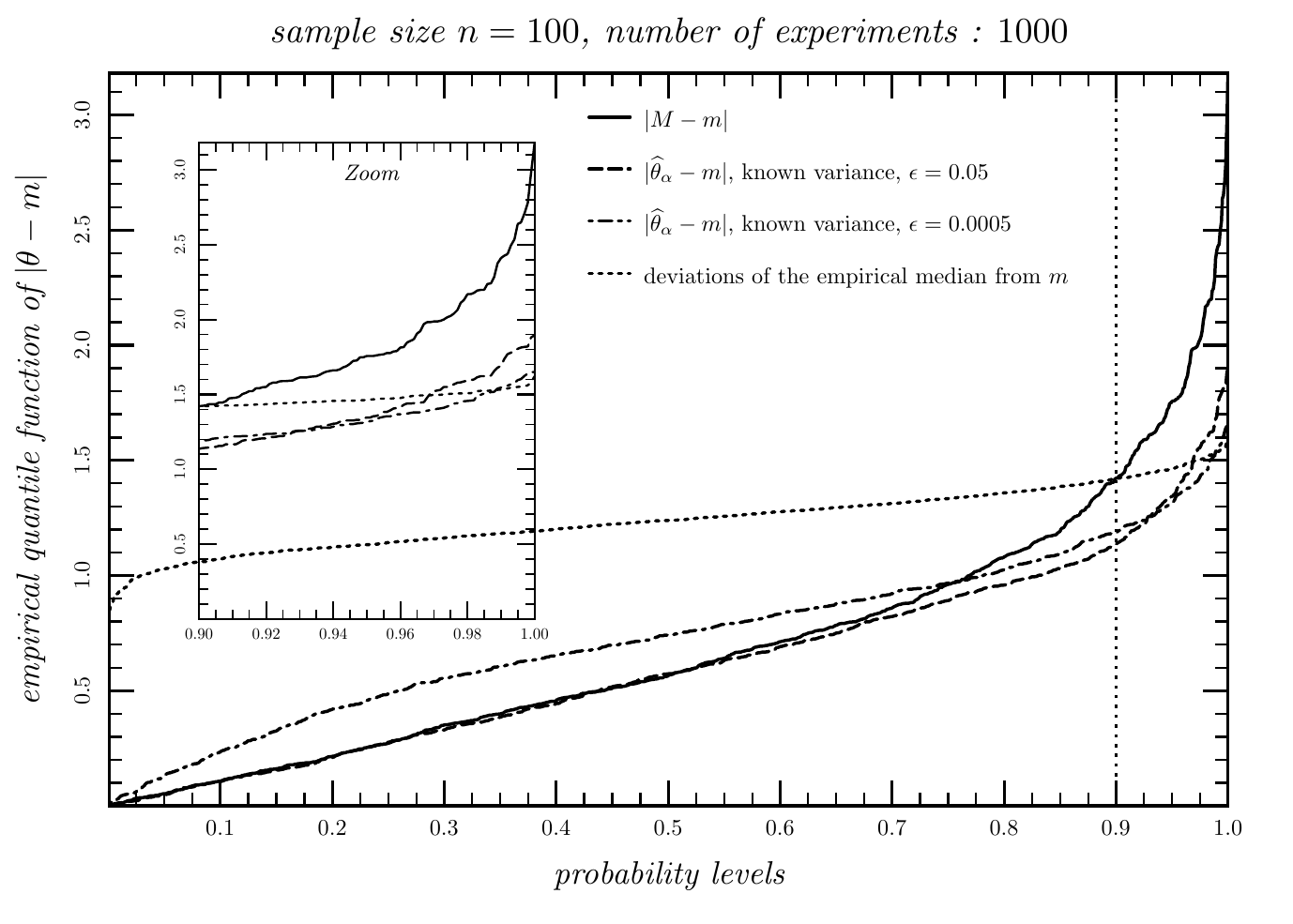}} \hfill \mbox{}

Let us end this section with a Gaussian sample.\\
\mbox{} \hfill \raisebox{-0.8cm}[9.2cm][0cm]{\includegraphics{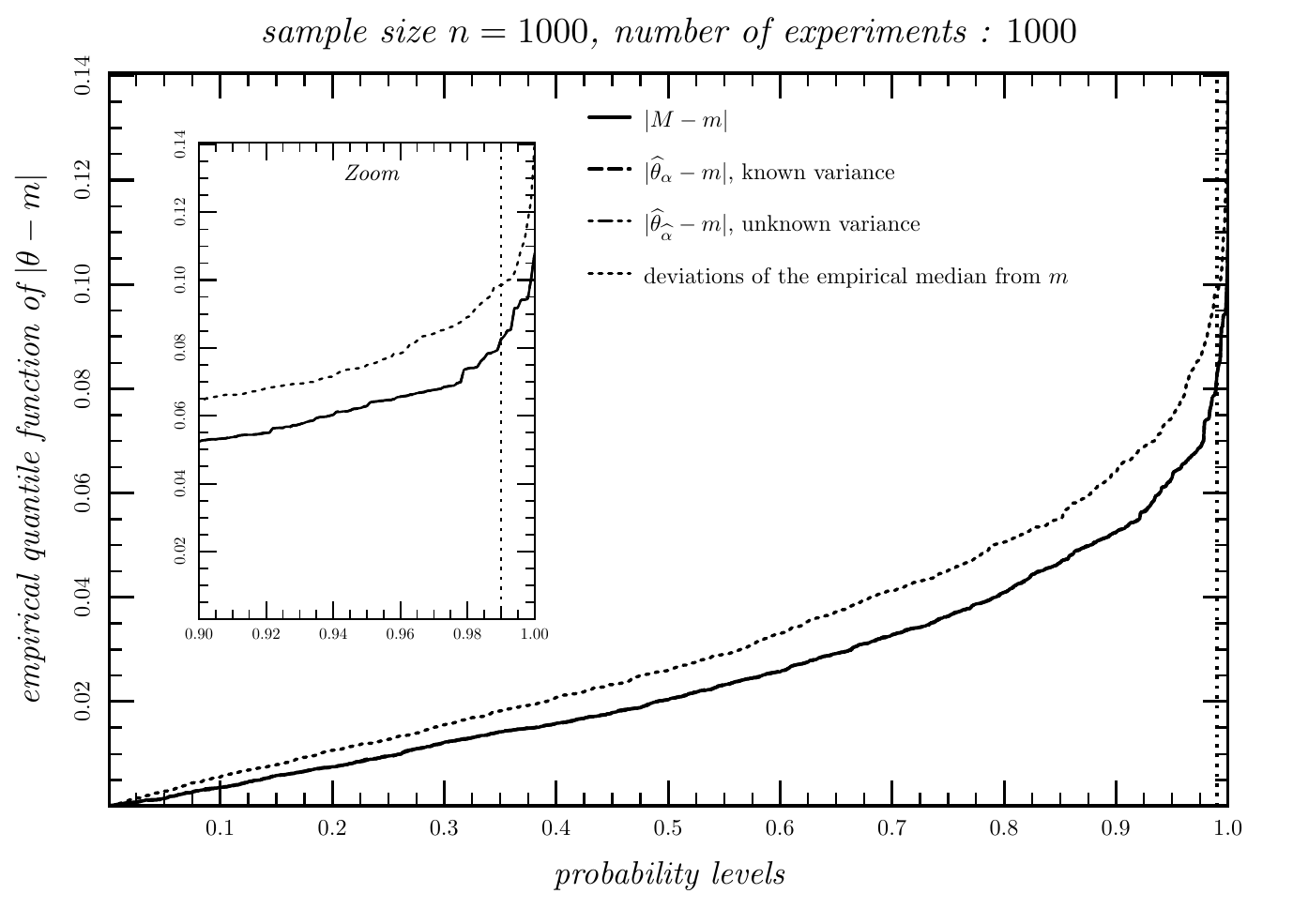}} \hfill \mbox{}

When the sample is Gaussian, as could be expected, our new M-estimators 
coincide with the empirical mean. What we obtained  
for a sample size $n = 1000$ could also be observed for other 
sample sizes. The deviations of 
the empirical median are higher in the Gaussian case, 
as proved in Proposition \thmref{prop2.1.2}
(stating that the deviations of the empirical mean of a Gaussian 
sample are optimal).

\subsection{Variance estimators}

Let us now test some variance estimates. This is an example 
where the usual unbiased estimate $\wh{V}$ defined by 
equation \myeq{eq4.1bis} shows its weakness. 
To demonstrate things on simple sample distributions, 
we choose again a mixture of two Gaussian measures
\begin{align*}
p_1 & = 0.995, & m_1 & = 0, & \sigma_1 & = 1, \\
p_2 & = 0.005, & m_2 & = 1, & \sigma_2 & = 5.
\end{align*}
Here $\kappa = 10.357$, $v = 1.125$ and $m = 0.005$.
To be in a situation where the variance estimates of
Proposition \thmref{prop4.1} work at high confidence
levels, we choose a sample size $n = 2000$,
and use in the estimator the parameters
$\kappa_{\max}  = 6*n/1000 = 12$, 
$p=2$ and $\epsilon = 0.0025$ (targeting the probability level 
$1 - 2 \epsilon = 1 - 10/n = 0.0995$).\\
\mbox{} \hfill \includegraphics{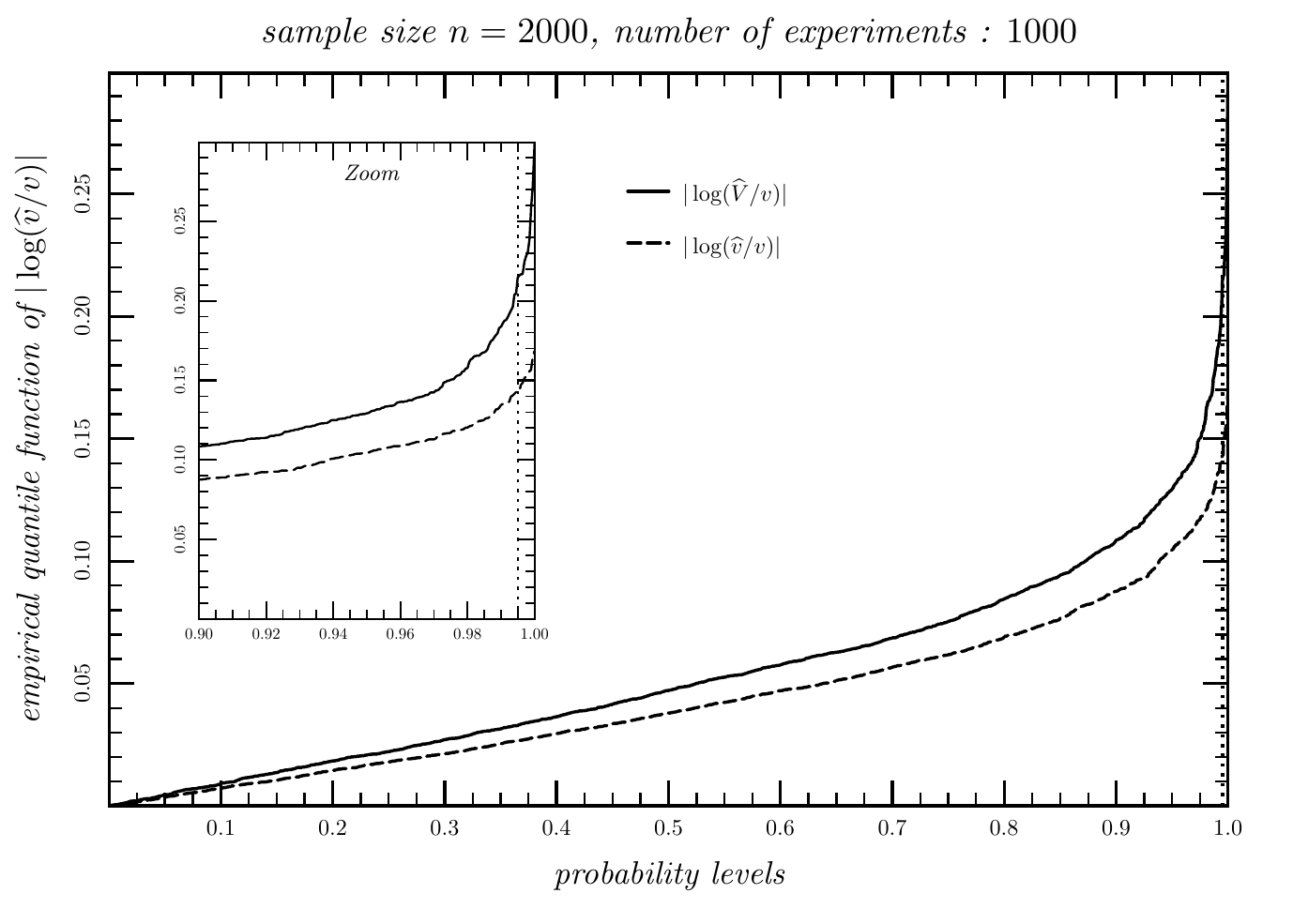}   
\hfill \mbox{}\\[-4ex]
So, for the variance as well as for the mean, there are
simple situations in which our new estimators 
perform better in practice than the more traditional 
ones. 

\subsection{Computation details}

To compute the estimators in these experiments, we used 
the two following iterative schemes (performing 
two iterations was enough for all the examples shown). 
\begin{align*}
\theta_0 & = M, &  \theta_{k+1} & = r(\theta_k) + \theta_k,\\
\beta_0 & = \frac{\delta - y}{\wh{V}}, & \beta_{k+1} 
& = \frac{\delta - y}{Q(\beta_k) + \delta} \beta_k.
\end{align*}
They are based on two principles: they have 
the desired fixed point and their right-hand side 
would be independent respectively of $\theta_k$
and $\beta_k$ if $\psi$ was replaced with the 
identity. The fact that $\psi$ is close to the identity
explains why the convergence is fast and only a few 
iterations are required. 

These numerical schemes involve only a reasonable amount
of computations,
opening the 
possibility to use the new estimators
in improved filtering algorithms in signal 
and image processing (a subject for future 
research that will not be pushed further
in this paper).

\section{Conclusion}

Theoretical results show that, for some sample 
distributions, the deviations of the empirical mean
at confidence levels higher than $90\%$ are larger than 
the deviations of some well chosen M-estimator.
Moreover, in our experiments, based on non-Gaussian sample 
distributions, the deviation quantile 
function of this M-estimator is uniformly below the quantile 
function of the empirical mean. The improvement
of the confidence interval at level $90\%$ can be more 
than $25\%$. 

Using Lepski's adapting approach offers a response with proved 
properties to the case when the variance is unknown.
For sample sizes starting from $1000$, an alternative 
is to use an M-estimator of the variance depending 
on some assumption on the value of the kurtosis.
However, it seems that the variance can in practice be 
estimated by the usual unbiased estimator $\wh{V}$,
defined by equation \myeq{eq4.1bis}, and plugged in
the estimator of Proposition \thmref{prop1.4}, 
although there is no mathematical warrant for this 
simplified scheme. 

\nocite{AuCat10b}
\bibliographystyle{plain}
\bibliography{ref2}

\end{document}